\let\amp=&
\begin{document}

\title{Automorphisms of locally string algebras}

\address{Department of Mathematics, Box 354350, University of Washington, Seattle, Washington 98195, USA}

\author{S. Ford}
\email{siford@uw.edu}

\begin{abstract}
It is known that automorphisms of finite-dimensional bound quiver algebras decompose into inner automorphisms and automorphisms which permute the vertices. In this paper, we show that for string algebras, automorphisms permuting  vertices further decompose into a graded automorphism and a composition of certain types of exponential automorphisms. Moreover, the same decomposition applies to automorphisms of locally string algebras, which are an analogue of string algebras wherein the finite-dimensional condition is omitted.
\end{abstract}

\subjclass[2000]{Primary 16W20; Secondary 16R20, 16W50}

\keywords{string algebra, generalized string algebra, automorphism}

\maketitle
\section{Introduction}

The automorphism group of an algebra is hard to compute in general. The Launois-Lenagan and Andruskiewitsch-Dumas conjectures describing the automorphism groups of certain classes of algebra were proven by Yakimov in  \cite{Ya13,Ya14}. In this paper, we will describe the automorphisms of another class of algebra, the (locally) string algebras.

Monomial special biserial algebras, better known as string algebras, are a   well-studied class of finite-dimensional bound quiver algebra \cite{SW83, WW85, BR87, R94, Bl98, R01, HZS05, Crawley-Boevey18, LPP18}.
For a finite-dimensional bound quiver algebra $A=\kk\cQ/\cI$, it is known that $\Aut(A)=\Hhat{A}\cdot \Inn^*(A)$, where $\Hhat{A}=\setst{\phi\in\Aut(A)}{\phi\vert_{\kk\cQ_0}\subseteq {\kk\cQ_0}}$ and $\Inn^*(A)$ denotes the inner automorphisms of $A$ which correspond to elements of $\one+J(A)=\one+\langle \alpha\rangle_{\alpha\in\cQ_1}$ \cite{GS99, Po89}. It was further shown in \cite{GS99} that $H_A/H_A\cap \Inn^*(A)$ is a Morita invariant of $A$ and that the Picard group of $A$ may be computed from $H_A/H_A\cap \Inn^*(A)$, where $H_A=\setst{\phi\in\Aut(A)}{\phi\vert_{\kk\cQ_0}=\id_{\kk\cQ_0}}$. In this paper, we show that for a string algebra $A$, an element of $H_A$ may be decomposed into a graded automorphism, a composition of exponential automorphisms, and elements of $\Inn^*(A)$ (see Theorem \ref{thm:strA0}). In Theorem \ref{thm:MIgentle}, we use this to compute $H_A/H_A\cap \Inn^*(A)$.

The automorphism groups of infinite-dimensional algebras are particularly difficult to compute. We consider the special case of locally gentle algebras, which are infinite-dimensional analogues of string algebras. The same decomposition of automorphisms into exponential, graded and inner automorphisms holds for this case (see Theorem \ref{thm:decomp}). Consequently, if we redefine 
$\Inn^*(A)$ in terms of the graded Jacobson radical 
(noting that the definitions of $\Inn^*(A)$ coincide if $A$ is finite-dimensional), the property $\Aut(A)=\Hhat{A}\cdot \Inn^*(A)$ holds in this infinite-dimensional case as well.

Section \ref{sec:prelim} lays out the definition of a (locally) string algebra and recalls some basic results and notation from \cite{FOZ}, with some minor additions relevant to (locally) string algebras.

In section \ref{sec:ppoaut}, we introduce some preliminary properties of automorphisms of (locally) string algebras and introduces notation for key subgroups of the automorphism group. In particular, we focus on the subgroup $\Aut_0(A)$, which is subgroup of $f\in\Aut(A)$ for which the associated graded map $\gr f$ (where $\gr f(x)$ is the degree $\deg(x)$ component of $f(x)$ for homogeneous $x$) is the identity.
\begin{lemma}[Lemma \ref{lem:aut0}] Suppose $A=\kk\cQ/\cI$ is (locally) string. For each automorphism $f\in \Aut(A)$, the associated graded map $\gr f$ is an automorphism and
\[1\rightarrow \Aut_0(A)\hookrightarrow \Aut(A)\xrightarrow[f\mapsto \gr f]{} \grAut(A)\rightarrow 1\]
is a short exact sequence. 
Moreover, if $A\not\cong \kk[x]$, then $f(w)\in \gr f(w)+A_{>\ell(w)}$ for all $w\in\cB$.
	\end{lemma}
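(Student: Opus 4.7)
The plan is to reduce everything to the key assertion that $f$ preserves the length filtration, $f(A_{\geq n}) \subseteq A_{\geq n}$ for all $n \geq 0$. The case $A \cong \kk[x]$ I would handle separately by direct computation, since every automorphism there is of the form $x \mapsto ax + b$ and one checks by hand that $\gr f : x \mapsto ax$ is an automorphism, $\Aut_0(\kk[x])$ consists of the translations $x \mapsto x + b$, and the short exact sequence holds. This case also makes transparent why the final clause must exclude $\kk[x]$: the constant $b$ puts $f(x) - \gr f(x)$ in $A_0$, not $A_{>1}$.

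For the main case $A \not\cong \kk[x]$, the strategy for filtration preservation is to analyze what $f$ does to the generators in degrees $0$ and $1$. The projection $A \twoheadrightarrow A_0 = \kk\cQ_0$ is an algebra map with kernel $A_{\geq 1}$, so applying it to $f(e_i)^2 = f(e_i)$ and $f(e_i) f(e_j) = 0$ for $i \neq j$, together with the fact that primitive idempotents in $\kk\cQ_0$ are exactly the $e_k$, pins the degree $0$ part of $f(e_i)$ to be $e_{\sigma(i)}$ for some injection $\sigma$ on vertices. For an arrow $\alpha : i \to j$, the identity $f(\alpha) = f(e_j) f(\alpha) f(e_i)$, projected to $A_0$, forces the degree $0$ part of $f(\alpha)$ to lie in $e_{\sigma(j)} A_0 e_{\sigma(i)}$, which vanishes unless $\sigma(i) = \sigma(j)$, i.e., unless $\alpha$ is a loop. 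In the loop case, the hypothesis $A \not\cong \kk[x]$ guarantees a monomial relation $\alpha^n = 0$ is present in $\cI$; applying $f$ and projecting to degree $0$ yields $c^n e_{\sigma(i)} = 0$, where $c$ is the scalar with $\pi_0(f(\alpha)) = c\, e_{\sigma(i)}$, forcing $c = 0$. Hence $f(A_{\geq 1}) \subseteq A_{\geq 1}$, and $f(A_{\geq n}) \subseteq A_{\geq n}$ follows by multiplicativity.

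Granted filtration preservation, the remaining assertions are routine. For homogeneous $x$ of degree $n$, $f(x) = \gr f(x) + r_x$ with $r_x \in A_{>n}$, which specialized to a basis string $w$ is the final clause. Multiplicativity of $\gr f$ drops out of expanding $f(x) f(y) = \gr f(x) \gr f(y) + (\text{terms in } A_{>\deg x + \deg y})$ and reading off the degree $\deg x + \deg y$ component. The same type of expansion yields $\gr(g \circ h) = \gr g \circ \gr h$; applied with $h = f^{-1}$ (which also preserves the filtration) this gives $\gr f \circ \gr f^{-1} = \gr \id = \id$, making $\gr f$ an automorphism. For the short exact sequence, the kernel of $f \mapsto \gr f$ is $\Aut_0(A)$ by definition, and surjectivity is immediate because any $g \in \grAut(A)$ satisfies $\gr g = g$.

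The main obstacle is the loop step: rigorously showing that $c = 0$ when $\alpha$ is a loop depends on the precise form of the relations in a (locally) string algebra, and is the only place where the exclusion of $\kk[x]$ plays a role. Handling it cleanly requires invoking the structural description of $\cI$ recalled in Section \ref{sec:prelim}, and will also underpin the analogous analyses in the later decomposition and Morita-invariance theorems.
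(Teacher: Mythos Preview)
Your overall architecture is right, and the reduction to filtration preservation together with the $\gr(g\circ h)=\gr g\circ\gr h$ trick is a clean way to get bijectivity of $\gr f$ (the paper instead proves this via a local-finiteness argument in Lemma~\ref{lem:gengraut}). The gap is in the loop step, and it is a real one: your claim that $A\not\cong\kk[x]$ forces $\alpha^n=0$ for any loop $\alpha$ is false for \emph{locally} string algebras. For instance, take two vertices $v,w$, a loop $\alpha$ at $v$, an arrow $\beta\colon v\to w$, and $\cI=\langle\alpha\beta\rangle$. This is locally string, $A\not\cong\kk[x]$, yet $\alpha^n\neq 0$ for all $n$. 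So projecting $f(\alpha)^n=0$ to degree~$0$ is unavailable, and you have no mechanism to kill the constant $c$.

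The paper's fix (Lemma~\ref{lem:Jautpres}) exploits a feature of non-nilpotent loops you have not used: such an $\alpha$ generates the infinite maximal path $\alpha^\infty$, and $m_{\alpha^\infty}=\alpha$ is \emph{central}. Hence $f(\alpha)\in Z(A)$, and Lemma~\ref{lem:deg0ctr} gives $f(\alpha)=\lambda\one+x$ with $x\in A_+$. Now $A\not\cong\kk[x]$ guarantees an arrow $\beta$ with $\alpha\beta\in\cI$ (or $\beta\alpha\in\cI$); from $f(\alpha)f(\beta)=0$ one gets $xf(\beta)=-\lambda f(\beta)$, and comparing the minimal degree in which $f(\beta)$ lives forces $\lambda=0$. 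This is the missing idea: centrality of the non-nilpotent loop, not nilpotence, is what makes the argument go through in the infinite-dimensional case.
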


In section \ref{sec:exp}, we consider two forms of nilpotent derivations, which correspond to elements of $H_A\cap \Inn^*(A)$. Derivations of {type I} correspond to elements of $\alpha A\alpha$, where $\alpha$ is an arrow contained in a finite maximal path, and derivations of {type II} correspond to linear  combinations of left maximal paths (with some additional conditions).
Up to an exponential automorphism corresponding to a derivation of type II, any automorphism in $\Aut_0(A)$ takes $w$ to $\langle w\rangle$ (Lemma \ref{cor:fixAbar}). This allows us to focus on two key cases: string algebras and locally gentle algebras with a unique, infinite maximal paths (subalgebras of $\kk\widetilde{A}_n$). In the string case, we have the following result.
\begin{proposition}[Proposition \ref{prop:strfixv}]
    Suppose that $A$ is a string algebra and $\phi\in \Aut_0(A)\cap H_A$. Then, there exist $y\in A_+$, $\rho\in D(A)$, and a  derivation $\delta$ of type II such that  $\phi =  \exp(\delta)\circ \rho\circ \Delta_{\one+y}.$
\end{proposition}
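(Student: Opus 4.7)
My plan is to reduce $\phi$ in two stages, peeling off one factor of the desired product at a time. In the first stage, I would apply Lemma \ref{cor:fixAbar} directly to $\phi$ to obtain a derivation $\delta$ of type II such that $\psi := \exp(-\delta) \circ \phi$ sends each basis string $w \in \cB$ into $\langle w \rangle$. Since type II exponential automorphisms belong to $\Aut_0(A) \cap H_A$, so does $\psi$, and proving the proposition reduces to exhibiting $y \in A_+$ and $\rho \in D(A)$ with $\psi = \rho \circ \Delta_{\one+y}$, since then $\phi = \exp(\delta) \circ \rho \circ \Delta_{\one+y}$.

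For the second stage, I would use the lemma recalled in the excerpt, which gives $\psi(w) - w \in A_{>\ell(w)}$ for every basis $w$ (the case $A \cong \kk[x]$ can be handled separately as it is essentially trivial). Combined with the output of stage one, the discrepancies $\psi(w) - w$ lie in $\langle w \rangle \cap A_{>\ell(w)}$. I would build $y$ inductively by length: starting from $y^{(0)} = 0$, at stage $n$ I would choose the next component of $y$ in $A_n$ so that $\psi \circ \Delta_{\one + y^{(n)}}^{-1}$ agrees with an element of $D(A)$ through degree $n$. Since $A$ is finite-dimensional, only finitely many degrees contribute, so the induction terminates in a single $y \in A_+$. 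The residual $\rho := \psi \circ \Delta_{\one+y}^{-1}$ then sends each $w$ to $w$ plus precisely the higher-degree corrections allowed by $D(A)$, so $\rho \in D(A)$.

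The main obstacle is the inductive step: a single $y$ must be chosen whose conjugation simultaneously corrects $\psi$ at every basis string $w$. Different strings share arrows, so the components of $y$ at each vertex are forced to satisfy overlapping compatibility conditions. I expect this to be resolved using the combinatorial structure of string algebras, namely the at-most-two-in-two-out constraint at each vertex together with the zero-relations that force unique extensions of strings; these reduce the required compatibility to a linear system that is solvable vertex by vertex. Verifying solvability, i.e., that the obstruction at stage $n$ actually vanishes once the type II contribution has been stripped off in stage one, is the technical heart of the proposition, and is where the hypothesis $\psi(w) \in \langle w \rangle$ is used essentially.
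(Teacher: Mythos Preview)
There is a genuine gap in your two-stage plan. Corollary \ref{cor:fixAbar} strips off only the maximal paths in $\phi(\alpha)$ that do \emph{not} contain $\alpha$ (the $y_\alpha$ of Lemma \ref{lem:autpart}); it leaves behind any left-maximal path $p$ parallel to $\alpha$ with $\alpha\leq p$. Such terms need not lie in the image of $D(A)\cdot\Inn^*(A)$: elements of $D(A)$ move $\alpha$ only within $\alpha A\alpha$, while at the lowest relevant degree $\Delta_{\one+y}$ moves $\alpha$ only within $\alpha A + A\alpha$ (the first-order correction is $-y\alpha+\alpha y$), so a term with $F(p)\neq\alpha$ and $L(p)\neq\alpha$ cannot be produced at that degree. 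Concretely, take the string algebra $A=\kk\langle a,b\rangle/\langle a^2,b^2,(ab)^2,(ba)^2\rangle$ on one vertex with two loops, and let $\phi\in\Aut_0(A)\cap H_A$ be given by $\phi(a)=a+bab$, $\phi(b)=b$. Since $bab\in\langle a\rangle$, your stage one is vacuous and $\psi=\phi$. But $\phi$ is not of the form $\rho\circ\Delta_{\one+y}$: forcing $(\rho\circ\Delta_{\one+y})(b)=b$ together with the vanishing of degree-$2$ terms in $(\rho\circ\Delta_{\one+y})(a)$ yields $y\in A_{\geq 2}$, and then the degree-$3$ part of $(\rho\circ\Delta_{\one+y})(a)$ lies in $\kk\, aba$, never in $\kk\, bab$.

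The paper avoids this by interleaving all three corrections degree by degree. At each level $n$ it first applies an inner correction to kill the terms with $F(w)=\alpha$, then a type~I correction for the residual $\alpha A\alpha$ pieces, and then---crucially---invokes Lemma \ref{lem:sgrfrm}(ii) to show that whatever remains at degree $n$ consists of left-maximal paths, which are removed by a type~II exponential before passing to degree $n+1$. The commutation of type~II with type~I (Lemma \ref{lem:nilpder}) and the normality of $\Inn^*(A)$ then allow the accumulated corrections to be collected into the single product $\exp(\delta)\circ\rho\circ\Delta_{\one+y}$. Your plan assembles the right ingredients, but the type~II correction cannot be performed once up front; it must recur through the induction.
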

Here, $D(A)$ is the subgroup of $\Aut_0(A)\cap H_A$ generated by exponential automorphisms corresponding to derivations of type I. These are the automorphisms which fix vertices and take each arrow $\alpha$ to $\alpha+\alpha A\alpha$ (see Lemma \ref{lem:p(z)type}). While $D(A)$ is not a subset of $\Inn^*(A)$, there may be nontrivial intersection, as seen in the following example.
\begin{example} [Example \ref{ex:innerD(a)}]
    Consider $A=\kk\cQ/\langle (ab)^2,(ba)^2\rangle$ where $\cQ$ is the quiver\; \qquad\quad .
    
\vspace{-.9cm}    \begin{flushright}
        \begin{tikzpicture}
            \unvtx{1}{0,0}
            \unvtx{2}{1,0}
            \draw[->](1) to[bend left] node[midway, above]{$a$} (2);
            \draw[->](2) to[bend left] node[midway, below]{$b$} (1);
        \end{tikzpicture}\quad\;\;\;\,
  \end{flushright}\vspace{-.3cm}
  
\noindent The automorphism $\phi\in D(A)$ given by $\phi(a)=a+aba$ and $\phi(w)=w$ for $w\in\cB\setminus \{a\}$ is not inner. The intersection $D(A)\cap \Inn^*(A)$ contains $\Delta_{\one-ab+ba}$ and thus is nontrivial.
\end{example}

The case in which $A$ has a unique, infinite maximal path is considered in section \ref{sec:ump}. We embed $A$ into the ring of matrix polynomial to show the following result.
\begin{proposition}[Proposition \ref{prop:1mcase}]
    Suppose that $A=\kk\cQ/\cI$ is a locally gentle algebra with a unique, infinite maximal path. Then either $A\cong \kk[x]$ or $\Aut_0(A)\subseteq \Inn^*(A)$. Consequently, if $A\not\cong \kk[x]$, then any automorphism of $A$ decomposes into an inner automorphism and a graded automorphism.
\end{proposition}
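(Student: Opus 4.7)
The plan is to exploit the embedding into a matrix polynomial ring suggested by the hint. Under the hypothesis of a unique infinite maximal path, the underlying quiver of $A$ must essentially be the cyclic $\widetilde{A}_n$ with cyclic orientation, and the degenerate $n=1$ case (one vertex, one loop) recovers $A \cong \kk[x]$, which is excluded. For $n \geq 2$, there is an embedding $\psi\colon A \hookrightarrow M_n(\kk[x])$ with $e_i \mapsto E_{ii}$, $\alpha_i \mapsto E_{i+1,i}$ for $i < n$, and the wraparound arrow $\alpha_n \mapsto x E_{1,n}$; under this map, the sum of length-$n$ cycles $\Gamma$ maps to $xI$, so the center $Z(A) = \kk[\Gamma]$ corresponds to the center $\kk[x]\cdot I$ of $M_n(\kk[x])$.

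Given $\phi \in \Aut_0(A)$, I would first show that $\phi$ fixes the center elementwise. The restriction $\phi\vert_{Z(A)}$ is a $\kk$-algebra automorphism of the polynomial ring $\kk[\Gamma]$, hence of the form $\Gamma \mapsto a\Gamma + b$; combined with $\phi(\Gamma) \in \Gamma + A_{>n}$ from Lemma \ref{lem:aut0}, this forces $a = 1$, $b = 0$, so $\phi(\Gamma) = \Gamma$. Thus $\phi$ is a $\kk[x]$-algebra automorphism of $\psi(A)$, which extends uniquely to a $\kk[x]$-algebra automorphism of $M_n(\kk[x])$ since $\psi(A)$ is a $\kk[x]$-order generating $M_n(\kk(x))$ over its fraction field. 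By Skolem--Noether, this extension is conjugation by some $u \in GL_n(\kk(x))$, determined up to a central scalar.

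The key step is then to normalize $u$ so that $u \in \one + J(A)$ inside $\psi(A)$, yielding an element of $\Inn^*(A)$. Because $\phi \in \Aut_0(A)$ fixes each vertex idempotent $E_{ii}$ modulo positive-degree elements, a standard lift-of-idempotents argument produces $v \in \one + A_+$ for which conjugation by $v$ composed with $\phi$ fixes each $E_{ii}$ exactly; iterating along the filtration by path length and correcting on arrows in turn should bring $u$ into the required form. The main technical obstacle I anticipate is precisely this normalization, namely ensuring that $u$ can be chosen in $\psi(A)$ itself (not merely in $M_n(\kk(x))$) and in the correct coset of $\one + J(A)$. Once this is established, the inclusion $\Aut_0(A) \subseteq \Inn^*(A)$, together with the short exact sequence of Lemma \ref{lem:aut0}, yields the decomposition of every automorphism of $A$ into an inner automorphism composed with a graded automorphism.
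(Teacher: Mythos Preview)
Your overall strategy matches the paper's: embed $A$ into $M_n(\kk[x])$, observe that $\phi\in\Aut_0(A)$ fixes the central element $m_\gamma$ and hence induces a $\kk[x]$-automorphism, localize to $M_n(\kk(x))$, and invoke Skolem--Noether to get conjugation by some $P$. One minor inaccuracy: the underlying quiver need not be $\widetilde A_n$ on the nose, since the vertices $s(\alpha_i)$ along the generating cycle may coincide (e.g.\ one vertex with two loops $a,b$ and relations $a^2,b^2$). The paper's embedding (Lemma~\ref{lem:mbij}) accounts for this by sending $\epsilon_v$ to $\bigoplus_{s(\alpha_i)=v}E_{ii}$ and defining $M(A)$ with the constraint $[P(0)]_{ii}=[P(0)]_{jj}$ whenever $s(\alpha_i)=s(\alpha_j)$; your map $e_i\mapsto E_{ii}$ tacitly assumes the vertices are distinct.

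The substantive divergence, and the genuine gap, is in the normalization of the conjugating matrix. You propose a lift-of-idempotents step followed by an iterative correction along the path-length filtration. Two problems arise. First, $A_+$ is not nilpotent here (indeed $\rJ(A)=\bar A_+=0$ in this case by Proposition~\ref{prop:Jrad}), so idempotent lifting modulo $A_+$ is not the standard nilpotent lift, and you must separately check that the correcting element lands in $A^\times$ rather than merely in $\one+A_+$. Second, and more seriously, an inductive correction indexed by path length has no termination mechanism in this infinite-dimensional algebra: unlike Proposition~\ref{prop:strfixv}, there is no $N$ with $A_{\geq N}=0$, so ``iterating along the filtration'' may never close up to give a single unit $u\in\Psi(\one+A_+)$. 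The paper sidesteps both issues with a single finite algebraic step: a modified Smith normal form (Lemma~\ref{lem:decomp}) decomposes any $P\in M_n(\kk[x])$ as $UDM_\sigma V$ with $U,V\in B_n$, where $B_n$ is precisely the set of unimodular matrices whose constant term is unipotent upper-triangular, hence $U,V\in\Psi(\one+A_+)^\times$. Conjugating $\phi$ by the corresponding elements of $\Inn^*(A)$ reduces to $\Delta_{DM_\sigma}$, and a direct computation on the generators $E_{i(i+1)}$ forces $\sigma=\id$ and $D$ scalar. This replaces your open-ended iteration with a terminating matrix factorization, which is exactly the missing idea.
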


Our main result, and the computation of $H_A/H_A\cap \Inn^*(A)$ for $A$ gentle, is in section \ref{sec:main}.
\begin{theorem}[Theorem \ref{thm:decomp}] Suppose that $A\not\cong \kk[x]$ is a (locally) string algebra. Then, $\phi\in \Aut_0(A)$ may be decomposed into a composition of exponential automorphisms (corresponding to derivations of type I and II) and an inner automorphism in $\Inn^*(A)$. 

If $A$ is (locally) gentle, then the composition of exponential automorphisms is again exponential.
\end{theorem}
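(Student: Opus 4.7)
The plan is as follows. I would first observe that $\Aut_0(A)\subseteq H_A$: by Lemma \ref{lem:aut0} we have $\phi(v)\in v+A_{>0}$ for each vertex $v$, and the only primitive idempotent in this coset is $v$ itself, so $\phi(v)=v$. It therefore suffices to decompose an arbitrary $\phi\in\Aut_0(A)\cap H_A$.

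When $A$ is a (finite-dimensional) string algebra, Proposition \ref{prop:strfixv} directly supplies $\phi=\exp(\delta)\circ\rho\circ\Delta_{\one+y}$ with $\delta$ of type II, $\rho\in D(A)$, and $\Delta_{\one+y}\in\Inn^*(A)$. Since $D(A)$ is generated by exponentials of type I derivations, writing $\rho$ as such a product yields the claimed decomposition with no further work.

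For a locally string algebra $A\not\cong\kk[x]$ that is not finite-dimensional, I would first invoke Lemma \ref{cor:fixAbar} to produce a type II derivation $\delta_0$ such that $\phi_0=\exp(-\delta_0)\circ\phi$ satisfies $\phi_0(w)\in\langle w\rangle$ for every $w\in\cB$. I would then partition the arrows of $\cQ$ into those lying on some finite maximal path and those lying only on infinite maximal paths. On the former I would reproduce the Proposition \ref{prop:strfixv} argument locally, extracting a product of type I exponentials and a localized inner conjugation supported inside the finite subalgebra spanned by paths through such arrows. Arrows of the latter type sit inside subalgebras satisfying the hypothesis of Proposition \ref{prop:1mcase}, on which $\phi_0$ restricts to an inner automorphism. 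The main obstacle is gluing these local decompositions into a global one: in the locally string setting $A$ is not $J$-adically complete, so the product of the local inner conjugators need not \emph{a priori} be a well-defined inner automorphism. I would handle this by working with the graded Jacobson radical (matching the refined definition of $\Inn^*(A)$ indicated in the introduction) and noting that each homogeneous element of $A$ meets only finitely many local pieces, so partial products of the form $\prod(\one+y_\alpha)$ stabilize in each graded degree and assemble to a single element of $\one+\widehat{J(A)}$.

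Finally, for the gentle refinement I would show that the composition of type I and II exponentials collapses to a single exponential. The claim is that in a (locally) gentle algebra any commutator $[\delta,\delta']$ of nilpotent derivations of type I or II is again a nilpotent derivation of the same types, and that on each basis element only finitely many of the derivations appearing in the decomposition act nontrivially. Granting this, the Baker--Campbell--Hausdorff series terminates on each basis element and patches together to a single derivation $\delta$ with $\phi=\exp(\delta)\circ\iota$ for some $\iota\in\Inn^*(A)$. I would verify the commutator calculation by direct computation on generators, using that the gentle relations force most pairwise products of the relevant derivations, when evaluated on arrows, to vanish.
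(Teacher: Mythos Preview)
Your opening claim that $\Aut_0(A)\subseteq H_A$ is false, and the argument you give for it does not work. Take the $A_2$ quiver with a single arrow $a\colon 1\to 2$, so $A=\kk\cQ$ is three-dimensional and gentle. The inner automorphism $\Delta_{\one+a}$ lies in $\Inn^*(A)\subseteq\Aut_0(A)$, yet
\[
\Delta_{\one+a}(\epsilon_1)=(\one-a)\epsilon_1(\one+a)=\epsilon_1+a\neq\epsilon_1,
\]
and $\epsilon_1+a$ is a perfectly good primitive idempotent in the coset $\epsilon_1+A_+$; there is no uniqueness of primitive idempotents in such cosets. This matters because Proposition~\ref{prop:strfixv} genuinely requires $\phi\in H_A$, and in the locally string case you cannot simply appeal to the finite-dimensional result $\Aut(A)=\Hhat{A}\cdot\Inn^*(A)$ to repair this, since that statement for locally string $A$ is precisely a corollary of the theorem you are trying to prove. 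The paper avoids the issue by never asserting $\phi\in H_A$: Proposition~\ref{prop:decomp} works directly with $\phi\in\Aut_0(A)$, uses Lemma~\ref{lem:autpart} and Corollary~\ref{cor:fixAbar} (neither of which needs $\phi$ to fix vertices) to strip off a type~II exponential and an inner piece coming from the $A^{(i)}$, and only then lands in $\Aut_0(\bar A)$ for the finite-dimensional string algebra $\bar A$, where the classical reduction to $H_{\bar A}$ via $\Inn^*(\bar A)$ is legitimately available (this is exactly how Theorem~\ref{thm:strA0} is proved).

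Two smaller points. Your gluing worry is unfounded: there are only finitely many infinite maximal paths $\gamma_1,\dots,\gamma_m$, the ideals $A^{(i)}_+$ annihilate one another, and hence $\prod_i(\one+y_i)=\one+\sum_i y_i$ is already a finite sum in $A$; no completion is needed. And the Baker--Campbell--Hausdorff argument for the gentle refinement is over-engineered: in a (locally) gentle algebra every arrow $\alpha\in\cQ_1^{(0)}$ satisfies $\alpha A\alpha=0$, so there are \emph{no} nonzero type~I derivations. Only type~II derivations remain, and Lemma~\ref{lem:nilpder} already shows these commute, so their exponentials compose to a single exponential with no BCH machinery required.
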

When $A$ is (locally) gentle, $H_A$ is generated by inner automorphisms, graded automorphisms which fix the vertices, and exponential automorphisms corresponding to finite maximal paths.

\section{Definitions and Preliminaries}\label{sec:prelim}

Throughout this paper, we work over the base field $\kk$. 
In this section, we give the definition of a (locally) string algebra and recall some basic results and notation from \cite{FOZ}, with some additions relevant for (locally) string algebras. 

A \emph{finite quiver}, $\cQ=(\cQ_0,\cQ_1,s,t)$, is a directed graph with a finite vertex set $\cQ_0$, a finite set of edges (called arrows) $\cQ_1$, and functions $s,t:\cQ_1\to\cQ_0$ which indicate the \emph{source} and \emph{target} of each arrow, respectively. Throughout, we assume that $\cQ$ has at least one arrow.

A \emph{path}, $p$, in a quiver is concatenation of arrows $p=\alpha_1\alpha_2\dotsc\alpha_n$ where $t(\alpha_i)=s(\alpha_{i+1})$ for every $1\leq i <n$. We say the path $p$ has \emph{length} $n$, and we denote this by $\ell(p)=n$. We denote the first arrow of $p$ by $F(p)$ and the last arrow of $p$ by $L(p)$, , i.e. $F(p)=\alpha_1$ and $L(p)=\alpha_n$. Further, we let $s(p)=s(F(p))$ and $t(p)=t(L(p))$. 

For every $v\in\cQ_0$, we let $\epsilon_v$ denote a stationary path at the vertex $v$, which has length 0 and $s(\epsilon_v)=t(\epsilon_v)\defeq v$. We let $F(\epsilon_v),L(\epsilon_v)\defeq \epsilon_v$.
If $p$ and $q$ are two paths so that $t(p)=s(q)$, then $pq$ is the path formed by concatenating these two paths. If we have paths $r,q',q''$ so that $p=q'rq''$, then we say that $r$ is a \emph{subpath} of $p$ and denote this $r\leq p$. We say that $r$ is an \emph{initial} (respectively, \emph{terminal}) subpath of $p$ if $q'$ ($q''$) is stationary.

Given a quiver $\cQ$, we can form its \emph{path algebra} $\kk\cQ$, whose basis consists of all paths in $\cQ$ and whose multiplication is given by concatenation of paths.
This path algebra is graded by path length. An ideal $\cI$ of $\kk\cQ$ is \textit{homogeneous} if it is generated by homogeneous elements, in which case the quotient $\kk\cQ/\cI$ is also graded by path length.
If the ideal $\cI$ is additionally generated by paths, then the quotient $\kk\cQ/\cI$ is a monomial algebra with a basis consisting of paths. 

\begin{defn}\label{def:string} An algebra $A=\kk\cQ/\cI$ is called a \emph{string algebra} if
\begin{enumerate}[(i)]
    \item the quiver $\cQ$ is finite, the underlying graph is connected, and $\indeg(v), \outdeg(v)\leq 2$ for each vertex $v\in\cQ_0$.
    \item whenever $s(\alpha)=t(\beta)=t(\beta')$ with $\beta\neq\beta'$, at least one of $\beta\alpha$ and $\beta'\alpha$ is in $\cI$.
    \item whenever $t(\alpha)=s(\beta)=s(\beta')$ with $\beta\neq \beta'$, at least one of  $\alpha\beta$ and $\alpha\beta'$ is in $\cI$.
    \item the ideal $\cI$ is generated by paths.
    \item the algebra $A$ is finite-dimensional (i.e. $\cI$ is admissible).
\end{enumerate}
We say the algebra $A$ is \emph{locally string} if it is infinite-dimensional and satisfies conditions (i)-(iv), and we write (locally) string to mean string or locally string. 

The algebra  $A$ is \emph{(locally) gentle algebra} if it satisfies the modified conditions
\begin{enumerate}[(i)]
     \item[(ii')] whenever $s(\alpha)=t(\beta)=t(\beta')$ with $\beta\neq\beta'$, \textit{exactly} one of $\beta\alpha$ and $\beta'\alpha$ is in $\cI$.
    \item[(iii')] whenever $t(\alpha)=s(\beta)=s(\beta')$ with $\beta\neq \beta'$, \textit{exactly} one of  $\alpha\beta$ and $\alpha\beta'$ is in $\cI$.
    \item[(vi')] the ideal $\cI$ is generated by paths \textit{of length 2}.
\end{enumerate}
\end{defn}

Any (locally) string algebra is a quotient of a (locally) gentle algebra by a monomial ideal.

\begin{notation}\label{not:algebra}
For the remainder of this paper, $\cQ$ denotes a finite, connected quiver, $\cI$ denotes an ideal in $\kk\cQ$ so that $\kk\cQ/\cI$ is (locally) string, $A$ denotes the (locally) string algebra $\kk\cQ/\cI$, and $\cB$ denotes the basis of $A$ consisting of paths.

We let $\cB_n$ denote the paths in $\cB$ of length $n$ and $A_n$ denote the $\kk$-span of $\cB_n$. Since $A$ is graded by path length, $A_n$ is the degree $n$ component of $A$ and $A=\bigoplus_{n\geq 0} A_n$. Similarly, $\cB_+$ denotes the nonstationary paths in $\cB$ and $A_+\defeq \bigoplus_{n>0} A_n=\spn_{\kk}\cB_+.$
\end{notation}

An immediate consequence of (ii) in  Definition \ref{def:string} is that for any path $p \in \cQ$, there is at most one arrow $\alpha$ so that $\alpha p\not\in\cI$. 
Similarly, there is at most one arrow $\beta\in\cQ_1$ so that $p\beta\not\in\cI$.

\begin{defn}\label{def:maxpaths} 
Let $p$ be a path in $\cB$. We say that
\begin{itemize}
\item $p$ is a \emph{right maximal} if for every $\alpha\in\cQ_1$, $p\alpha\in\cI$.
\item $p$ is \emph{left maximal} if for every $\alpha\in\cQ_1$, $\alpha p\in\cI$.
\item $p$ is a \emph{finite maximal path} if $p$ is both left and right maximal. 
\end{itemize}
\end{defn}

\begin{example}
    The path algebra $\kk K$ of the Kronecker quiver, $K=$ \begin{tikzpicture}
        \unvtx{1}{0,0}
        \unvtx{2}{1,0}
        \undbl{1}{2}
    \end{tikzpicture}\,, is the only (locally) gentle algebra in which two parallel arrows are both finite maximal paths.
\end{example}

\begin{defn}
   We say that a cycle $c\in \cB$ generates an \emph{infinite maximal path} if $c$ is a nonstationary, primitive cycle in $\cB$ such that $c^n\notin\cI$ for any $n\in\NN$. We denote the associated infinite maximal path by $c^\infty$ and, for $w\in\cB$, we write $w\leq c^\infty$ if there exists $n\geq 0$ such that $w\leq c^n$. 
\end{defn}

If $A$ is (locally) gentle, $\cI$ is generated by paths of length two and so it suffices that $c^2\notin\cI$.

Since $A$ is (locally) string and $c$ is primitive, if $c$ generates an infinite maximal path and there exists $c'\leq c^\infty$ such that $(c')^2\notin\cI$, then we must have $c\leq (c')^n$ for some $n\geq 0$. Consequently, if $c'$ also generates an infinite maximal path, then $c\leq (c')^\infty$ if and only if $c'\leq c^\infty$. 
\begin{defn}
    Suppose that $c$ and $c'$ generate infinite maximal paths. If $c\leq (c')^\infty$ (or equivalently, $c'\leq c^\infty$), we say that 
    $c^\infty$ and $(c')^\infty$ are the same infinite maximal path.
\end{defn}

The following result is stated for locally gentle algebras in  \cite{FOZ}, but the argument extends to a (locally) string algebra.
\begin{lemma}
    Let $A$ be (locally) string. For any $\alpha\in \cQ_1$, there is a path $\gamma_r(\alpha)$ which is  maximal among the paths in $\cB$ which (i) start with $\alpha$ and (ii) have no repeated arrows. 

If $\alpha A\alpha\neq 0$, then $\gamma_r(\alpha)$ is a cycle and $\gamma_r(\alpha)\alpha\notin\cI.$
\end{lemma}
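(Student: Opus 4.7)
The plan is to construct $\gamma_r(\alpha)$ by a deterministic extension process from $\alpha$. By condition (iii) of Definition \ref{def:string}, for any arrow $\beta$ there is at most one arrow $\beta'$ with $\beta\beta'\in\cB$, so starting from $\alpha$ one obtains a unique forward sequence of arrows $\alpha = \beta_0,\beta_1,\beta_2,\ldots$, defined as long as extensions exist and the initial segment $\alpha\beta_1\cdots\beta_i$ remains in $\cB$. I would define $\gamma_r(\alpha)$ as the longest initial segment of this sequence with distinct arrows; since $|\cQ_1|$ is finite, this terminates in at most $|\cQ_1|$ steps. Any other path in $\cB$ starting with $\alpha$ and having distinct arrows must agree with this sequence arrow-by-arrow (by forward uniqueness), giving both existence and uniqueness of the maximal path.

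For the second claim, suppose $\alpha A\alpha\neq 0$ and pick $b\in\cB$ with $\alpha b\alpha\in\cB$. If $b=\epsilon_{t(\alpha)}$ then $\alpha$ is a loop with $\alpha^2\in\cB$ and $\gamma_r(\alpha)=\alpha$ already works, so assume $b=b_1\cdots b_k$ is nontrivial. By forward uniqueness, $\beta_i=b_i$ for $1\leq i\leq k$ and $\beta_{k+1}=\alpha$, and each initial segment through $\alpha b\alpha$ lies in $\cB$ as a subpath of $\alpha b\alpha$. So the extension process cannot stop for $\cB$-containment reasons before step $k+1$; it can only terminate at the first repeated arrow. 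The crux is to show this first repetition is $\beta_p=\alpha$ itself for some $p\leq k+1$. This follows from the symmetric backward-uniqueness given by condition (ii): each $\beta_i$ (with $i\geq 1$) is preceded by the unique arrow $\gamma$ with $\gamma\beta_i\in\cB$, so any repetition $\beta_i=\beta_j$ with $1\leq i<j$ propagates backward to $\beta_0=\beta_{j-i}$, forcing $\alpha$ to appear at position $j-i$. Taking $p$ to be the smallest positive index with $\beta_p=\alpha$, no repetition occurs among $\beta_0,\ldots,\beta_{p-1}$, and $\gamma_r(\alpha)=\alpha\beta_1\cdots\beta_{p-1}$. This path ends at $s(\beta_p)=s(\alpha)$, so it is a cycle, and $\gamma_r(\alpha)\alpha$ is an initial subpath of $\alpha b\alpha\in\cB$, hence lies in $\cB$.

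The main obstacle is combining the backward-uniqueness step with the fact that, for string (as opposed to gentle) algebras, $\cI$ may contain monomial generators of length greater than two. Forward compatibility of consecutive arrows does not by itself guarantee that a longer initial segment lies in $\cB$, so one has to be careful that the extension process does not stop prematurely. The assumption $\alpha A\alpha\neq 0$ enters precisely here, via the witness $b$, to ensure the relevant initial segments are automatically in $\cB$; once that is in hand, the forward/backward-deterministic analysis of the arrow sequence delivers both the cycle property and $\gamma_r(\alpha)\alpha\in\cB$ cleanly.
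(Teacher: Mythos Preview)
The paper does not actually prove this lemma; it cites \cite{FOZ} (where the result is stated for locally gentle algebras) and remarks that the argument extends to the (locally) string case. So there is no in-paper proof to compare against.

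That said, your argument is correct and is precisely the expected one. The forward-uniqueness consequence of condition (iii) makes the extension process deterministic, and since any path in $\cB$ starting with $\alpha$ and having distinct arrows must follow this same sequence (each of its initial subpaths lies in $\cB$, so each consecutive pair of arrows determines the next), maximality and uniqueness follow. For the second claim, your use of a witness $\alpha b\alpha\in\cB$ to guarantee that the extension process does not halt for $\cB$-containment reasons before reaching a repeat, together with backward uniqueness (condition (ii)) to force the first repeat to be $\alpha$ itself, is exactly the right mechanism. The only place to be slightly more explicit is that backward uniqueness applies because $\beta_{i-1}\beta_i$ and $\beta_{j-1}\beta_j$ are length-two subpaths of $\alpha b\alpha\in\cB$ and hence themselves in $\cB$; you implicitly use this but it is worth saying, since it is the spot where the string (rather than gentle) hypothesis could have caused trouble.
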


When $A$ is locally gentle,  $\alpha A\alpha\neq 0$ implies that  $\gamma_r(\alpha)$ generates an infinite maximal path.

\begin{lemma}\cite{FOZ}\label{lem:ump}
Suppose that $A$ is (locally) gentle. Every arrow $\alpha\in\cQ_1$ is contained in a unique maximal path, denoted $\gamma_\alpha$. Moreover, if $\gamma_\alpha$ is infinite, then $\gamma_\alpha=(\gamma_r(\alpha))^\infty.$
\end{lemma}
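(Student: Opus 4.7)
The strategy is to establish unique two-sided extendability of paths in $\cB$ and then iterate from $\alpha$. For the right side, if $p\in\cB$ and $t(p)$ has two outgoing arrows $\beta,\beta'$, condition (iii') forces exactly one of $L(p)\beta, L(p)\beta'$ into $\cI$; since $\cI$ is generated by paths of length $2$ and no length-$2$ subword of $p$ can already lie in $\cI$, we have $p\beta\in\cI$ iff $L(p)\beta\in\cI$, so at most one of $p\beta,p\beta'$ lies in $\cB$. When $t(p)$ has fewer outgoing arrows the claim is immediate. Symmetric reasoning using (ii') shows each $p\in\cB$ has at most one left extension.

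Next, starting from $\alpha$, I would iterate the right extension to build a (possibly terminating) sequence of arrows $\alpha=\alpha_0,\alpha_1,\alpha_2,\ldots$, and the left extension similarly. If both sequences terminate, their concatenation is a finite maximal path containing $\alpha$, and its uniqueness is immediate from the uniqueness of each extension step. If the right sequence is infinite, finiteness of $\cQ_1$ and pigeonhole give the first equality $\alpha_i=\alpha_j$ with $0\le i<j$; unique right extension forces $\alpha_{i+k}=\alpha_{j+k}$ for every $k\ge 0$, and unique left extension applied at the common value $\alpha_i=\alpha_j$ forces $\alpha_{i-1}=\alpha_{j-1}$ whenever $i\ge 1$. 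Inductively, the sequence is purely periodic from index $0$ with period $p=j-i$, and $c\defeq \alpha_0\alpha_1\cdots\alpha_{p-1}$ is a primitive cycle containing $\alpha$ with $L(c)F(c)=\alpha_{p-1}\alpha_0\notin\cI$. Equivalently, $c^2\notin\cI$, so $c$ generates an infinite maximal path in the sense of the paper. The same reasoning shows the left extension agrees with iterating $c$ backwards, so $\gamma_\alpha=c^\infty$.

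To identify $c$ with $\gamma_r(\alpha)$, I would observe that $c$ has no repeated arrows: an internal repetition $\alpha_i=\alpha_j$ with $0\le i<j<p$ would, by the same backward/forward propagation, yield a strictly shorter period, contradicting minimality of $p$. Moreover $c$ admits no further no-repeat extension, since the next arrow $\alpha_p=\alpha_0=\alpha$ repeats $F(c)$. Thus $c$ is a path in $\cB$ starting at $\alpha$ which is maximal subject to having no repeated arrows, so by the preceding lemma $c=\gamma_r(\alpha)$, giving $\gamma_\alpha=(\gamma_r(\alpha))^\infty$.

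The main obstacle is the backward propagation of periodicity: pigeonhole alone yields only eventual periodicity of $(\alpha_k)_k$, and pure periodicity starting at $\alpha_0$ is obtained only by invoking unique left extension at the repeated arrow. This is where the gentle hypothesis, specifically the length-$2$ generation of $\cI$, enters in an essential way, since it is what reduces extendability of a general path to a local two-arrow test and underwrites the uniqueness on both sides.
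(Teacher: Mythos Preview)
The paper does not prove this lemma; it is quoted from \cite{FOZ} with no argument supplied. Your proposal is therefore not competing with anything in the present paper, and it is a correct, self-contained proof.

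One small expositional point: your case split is ``both sequences terminate'' versus ``the right sequence is infinite,'' which on its face omits the possibility that the right sequence terminates while the left does not. In the gentle setting this case cannot occur---your own backward-propagation argument, applied to a left-infinite sequence, produces a cycle through $\alpha$ whose iterates force the right sequence to be infinite as well---but it would be cleaner to state the dichotomy as ``both terminate'' versus ``at least one is infinite'' and then note that one being infinite forces the other to be. Also, once you take $(i,j)$ to be the \emph{first} repeat (minimal $j$), the arrows $\alpha_0,\dots,\alpha_{j-1}$ are already pairwise distinct, and your backward propagation immediately gives $i=0$; the separate argument you give for ``$c$ has no repeated arrows'' via minimality of the period is then redundant, though not incorrect.
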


If  there is a path $p\in\cB_+$ containing arrows $\alpha$ and $\beta$, then $\gamma_\alpha=\gamma_\beta$ and $p\leq \gamma_\alpha$. Thus, if $A$ is (locally) gentle, $\cB_+$ is partitioned into subpaths of maximal paths.

As any (locally) string algebra $A=\kk\cQ/\cI$ is a quotient of some (locally) gentle algebra $A'=\kk\cQ/\cJ$ (where $\cJ\subset \cI$ is generated by a subset of the length two paths in $\cI$),  
any arrow $\alpha$ in $\cQ_1$ is contained in a unique maximal path $\gamma_\alpha$ of $A'$.
In particular, if $\gamma_\alpha$ is infinite in $A'$, then either $\gamma_\alpha$ is an infinite maximal path in $A$ as well or some (finite) subpath $w\leq \gamma_\alpha$ is in $\cI$. In the latter case, there must exist $n\in\NN$ such that every length $n+1$ subpath of $\gamma_\alpha$ contains $w$ and so a path in $\cB$ containing $\alpha$ has length at most $n$. Thus we have the following corollary.

\begin{corollary}\label{cor:mpstr}
    If $A$ is (locally) string, then every arrow is contained in a maximal path, not necessarily unique. If an arrow $\alpha$ is not contained in an infinite maximal path of $A$, there exists $n_\alpha$ such that $A\alpha A\subseteq A_{\leq n_\alpha}$ (that is, any path containing $\alpha$ has length at most $n_\alpha$).
    
    If $\alpha$ is contained in an infinite maximal path, then this infinite maximal path is $\gamma_\alpha\defeq (\gamma_r(\alpha))^\infty$ and any path in $\cB$ which contains $\alpha$ is a subpath of $\gamma_\alpha$.    
\end{corollary}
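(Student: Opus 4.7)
My plan is to bootstrap from the (locally) gentle setting via the quotient presentation noted just above the statement: $A = \kk\cQ/\cI$ arises as a quotient of the (locally) gentle algebra $A' = \kk\cQ/\cJ$ where $\cJ$ is generated by a subset of the length-two paths in $\cI$. Writing $\gamma'_\alpha$ for the unique maximal path of $A'$ containing $\alpha$ (Lemma \ref{lem:ump}), the central transfer principle is that every $p \in \cB$ (the path basis of $A$) remains nonzero in $A'$, since $\cJ \subseteq \cI$; hence if $p$ contains $\alpha$ then $p \leq \gamma'_\alpha$. This one observation will drive the entire argument.

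First I would dispose of the case where $\gamma'_\alpha$ is finite. Then every $p \in \cB$ containing $\alpha$ satisfies $\ell(p) \leq \ell(\gamma'_\alpha)$, giving $A\alpha A \subseteq A_{\leq n_\alpha}$ for $n_\alpha \defeq \ell(\gamma'_\alpha)$. Since the basis paths through $\alpha$ are finite in number, a longest one exists and is a finite maximal path of $A$ through $\alpha$; the bound also precludes $\alpha$ from lying on any infinite maximal path of $A$.

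Next, when $\gamma'_\alpha$ is infinite, Lemma \ref{lem:ump} gives $\gamma'_\alpha = (\gamma_r(\alpha))^\infty$ in $A'$. I would split on whether all powers of $\gamma_r(\alpha)$ avoid $\cI$. If yes, then $\gamma_r(\alpha)$ generates an infinite maximal path of $A$, which is $\gamma_\alpha \defeq (\gamma_r(\alpha))^\infty$, and by the transfer principle any $p \in \cB$ through $\alpha$ is a subpath of $\gamma_\alpha$. If instead $(\gamma_r(\alpha))^n \in \cI$ for some $n$, pick a minimal subpath $w$ of $\gamma'_\alpha$ lying in $\cI$; using that $\gamma'_\alpha$ is periodic with period $\ell(\gamma_r(\alpha))$, a pigeonhole argument (as sketched in the paragraph above the statement) produces $n_\alpha$ such that every length $n_\alpha + 1$ subpath of $\gamma'_\alpha$ contains $w$, bounding the length of any $p \in \cB$ through $\alpha$.

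The main subtlety I anticipate is the uniqueness half of the infinite-maximal-path statement, since a priori nothing prevents two distinct primitive cycles in $\cQ$ from each generating an infinite maximal path through $\alpha$. Controlling this uses the uniqueness in the gentle cover $A'$: any cycle $c'$ generating an infinite maximal path of $A$ through $\alpha$ yields arbitrarily long $\cB$-paths through $\alpha$, which all embed into the single path $\gamma'_\alpha = (\gamma_r(\alpha))^\infty$, so by the primitivity remark recalled just before the corollary the two cycles generate the same infinite maximal path in the sense defined earlier in the section.
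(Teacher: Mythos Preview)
Your proposal is correct and follows essentially the same route as the paper, which gives the argument in the paragraph immediately preceding the corollary rather than as a formal proof: pass to the (locally) gentle cover $A'=\kk\cQ/\cJ$, use the unique maximal path $\gamma'_\alpha$ there, and in the infinite case either transport the infinite maximal path down to $A$ or, when some subpath dies in $\cI$, invoke the periodicity/pigeonhole bound. Your write-up is slightly more thorough than the paper's---you explicitly treat the case $\gamma'_\alpha$ finite and you spell out the uniqueness of the infinite maximal path via the primitivity remark---but these are elaborations of the same argument, not a different strategy.
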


This allows us to partition $\cB_+$ into the subpaths of each infinite maximal path and the paths which are not subpaths of any infinite maximal path.

\begin{lemma}\label{lem:Abar}
Let $A=\kk\cQ/\cI$ be (locally) string. Let $\gamma_1,\ldots, \gamma_m$ be an enumeration of the infinite maximal paths of $A$. We define subquivers $\cQ^{(i)}$ of $\cQ$ as follows.
\begin{enumerate}[(i)]
    \item Let $\cQ_1^{(0)}=\setst{\alpha}{\alpha\not\leq \gamma_i\text{ for any }1\leq i\leq m},$ the set of arrows which are not contained in an infinite maximal path;
    \item For $1\leq i\leq m$, let $\cQ_1^{(i)}=\setst{\alpha}{\alpha\leq \gamma_i}$, the set of arrows  contained in $\gamma_i$.
    \item For $0\leq i\leq m$, let $\cQ_0^{(i)}=s(\cQ_1^{(i)})\cup t(\cQ_1^{(i)})$ (i.e., the vertices which are the source or target of some arrow in $\cQ_1^{(i)}$).
\end{enumerate} 
Let $\bar{A}\defeq \kk\cQ^{(0)}/(\cI\cap \kk\cQ^{(0)})$ and $A^{(i)}\defeq \kk\cQ^{(i)}/(\cI\cap \kk\cQ^{(i)})$ for $i>0$. Then, $\bar{A}_+$ and $A^{(i)}_+$ are ideals of $A$ and $A_+=\bar{A}_+\oplus \bigoplus_{i=1}^nA^{(i)}_+$.
\end{lemma}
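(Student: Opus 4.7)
The plan is to first show that $\cB_+$ decomposes as a disjoint union of the nontrivial subpaths of each infinite maximal path $\gamma_i$ together with the paths all of whose arrows lie in $\cQ_1^{(0)}$. From that disjoint basis decomposition the vector-space identity $A_+=\bar{A}_+\oplus\bigoplus_{i=1}^m A^{(i)}_+$ is immediate, and the ideal property reduces to a check on basis elements under left and right multiplication by arrows.

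For the partition of $\cB_+$, fix a nonstationary path $p\in\cB_+$. The key tool is Corollary \ref{cor:mpstr}: if an arrow $\alpha$ lies in any infinite maximal path, that path is uniquely $\gamma_\alpha=(\gamma_r(\alpha))^\infty$, and every path in $\cB$ containing $\alpha$ is a subpath of $\gamma_\alpha$. Thus if $p$ contains some arrow of $\gamma_i$, then $p\leq \gamma_i$ and every arrow of $p$ is already in $\cQ_1^{(i)}$; otherwise every arrow of $p$ lies in $\cQ_1^{(0)}$. The uniqueness half of the corollary shows that an arrow lies in at most one infinite maximal path, so $\cQ_1^{(0)},\cQ_1^{(1)},\ldots,\cQ_1^{(m)}$ partition $\cQ_1$ and the three cases are mutually exclusive. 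Hence $\cB_+$ is the disjoint union of $\bar{\cB}_+$ and the sets $\cB^{(i)}_+$ of nonstationary subpaths of each $\gamma_i$, giving the claimed direct sum decomposition of $A_+$.

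For the ideal property it suffices to check stability under left and right multiplication by an arrow $\beta\in\cQ_1$. Given $p\in\bar{\cB}_+$, the product $\beta p$ is either zero in $A$ or a basis path; in the latter case I claim $\beta p\in\bar{\cB}_+$. Indeed, if $\beta$ were contained in some $\gamma_i$, then $\beta p$ contains the arrow $\beta\leq\gamma_i$, so by Corollary \ref{cor:mpstr} we would have $\beta p\leq \gamma_i$, forcing the arrows of $p$ into $\cQ_1^{(i)}$ and contradicting $p\in\bar{\cB}_+$. Therefore $\beta\in\cQ_1^{(0)}$ and all arrows of $\beta p$ lie in $\cQ_1^{(0)}$. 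For $p\in\cB^{(i)}_+$, the same corollary applied to the arrows of $p$ (which already lie in $\gamma_i$) shows that any nonzero $\beta p$ satisfies $\beta p\leq\gamma_i$, hence $\beta p\in\cB^{(i)}_+$. Right multiplication is entirely symmetric.

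The only technical point is the consistent use of Corollary \ref{cor:mpstr} to upgrade ``contains an arrow of $\gamma_i$'' to ``is a subpath of $\gamma_i$'', since this is the single fact that forces each basis path, and each product of a basis path with an arrow, into exactly one of the components. With that in hand, both the disjointness of the basis partition and the ideal property are just monomial bookkeeping, so I do not expect a serious obstacle.
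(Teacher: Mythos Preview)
Your proposal is correct and follows essentially the same approach as the paper: both arguments hinge on Corollary~\ref{cor:mpstr} to upgrade ``contains an arrow of $\gamma_i$'' to ``is a subpath of $\gamma_i$'', yielding the disjoint partition of $\cB_+$ and the ideal property. The only cosmetic difference is that the paper verifies $\bar{A}_+$ is an ideal by showing $ww'=w'w=0$ whenever $w\in\bar{\cB}_+$ and $w'$ is a subpath of some $\gamma_i$, whereas you check closure under multiplication by a single arrow; both are the same computation.
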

The subalgebra $\bar{A}$ of $A$ is a direct sum of string algebras, though $\cQ^{(0)}$ is not necessarily connected. By contrast, each $A^{(i)}$ is locally gentle and has a unique maximal path ($\gamma_i$).

\begin{proof}
   Let $\alpha$ be any arrow in $\gamma_i$. For any $w,w'\in\cB$, either $w\alpha w'=0$ or $w\alpha w'$ is a path containing $\alpha$ and hence a subpath of $\gamma_i$ by Corollary \ref{cor:mpstr}. Thus
$A^{(i)}_+$ is an ideal of $A$ spanned over $\kk$ by the nontrivial subpaths of $\gamma_i$.

    Suppose that $w\in \cB_+$ is not a subpath of any infinite maximal path. Then none of the arrows in $w$ are contained in an infinite maximal path, so $w\in \bar{A}_+$. Additionally, for any $w'\in\cB_+$ such that $w'$ is a subpath of an infinite maximal path, we see that $ww'=w'w=0$ (otherwise, the resulting path would necessarily be a subpath of an infinite maximal path and hence $w$ would be as well). Thus $\bar{A}_+$ is also an ideal of $A$ and $A_+=\bar{A}_+\oplus \bigoplus_{i=1}^n A^{(i)}_+$.
\end{proof}

\begin{proposition}\label{prop:Jrad}
Let $\bar{A}$ be as in Lemma \ref{lem:Abar} and $\rJ(A)$ ($\grJ(A)$) denote the (graded) Jacobson radical of $A$. Then, $\grJ(\bar{A})=\bar{A}_+=\rJ(A)$.
\end{proposition}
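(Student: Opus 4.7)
The plan is to prove both equalities by first establishing that $\bar{A}_+$ is a nilpotent ideal of $A$ (hence contained in both $\grJ(\bar{A})$ and $\rJ(A)$), and then checking the reverse inclusions using that $\bar{A}/\bar{A}_+$ is semisimple and that the complementary ideals $A^{(i)}_+$ contribute nothing to $\rJ(A)$.

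First I would show $\bar{A}_+$ is nilpotent. By Corollary \ref{cor:mpstr}, each arrow $\alpha\in\cQ^{(0)}_1$ lies only in finite maximal paths, so there exists $n_\alpha$ with $A\alpha A\subseteq A_{\le n_\alpha}$. Since $\cQ^{(0)}_1$ is finite and every path in $\bar{A}_+$ uses some arrow of $\cQ^{(0)}_1$, all paths in $\bar{A}_+$ have length at most $N:=\max_{\alpha\in\cQ^{(0)}_1}n_\alpha$, giving $\bar{A}_+^{N+1}=0$ (and incidentally that $\bar{A}$ is finite-dimensional). This yields $\bar{A}_+\subseteq \grJ(\bar{A})$ and, using Lemma \ref{lem:Abar} to know $\bar{A}_+$ is a two-sided ideal of $A$, also $\bar{A}_+\subseteq \rJ(A)$. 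Since $\bar{A}/\bar{A}_+\cong\kk^{|\cQ^{(0)}_0|}$ is semisimple, the reverse inclusion $\grJ(\bar{A})\subseteq \bar{A}_+$ is immediate, yielding $\grJ(\bar{A})=\bar{A}_+$.

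For $\rJ(A)\subseteq \bar{A}_+$, I would project $A$ onto each $A^{(i)}$. Since $A/A_+\cong A_0$ is semisimple, $\rJ(A)\subseteq A_+=\bar{A}_+\oplus\bigoplus_i A^{(i)}_+$. By Corollary \ref{cor:mpstr}, any nonzero product of paths in these ideals is a single path whose arrows all lie in $\cQ^{(0)}_1$ or in the same $\cQ^{(j)}_1$, so $\bar{A}_+$ and the $A^{(j)}_+$ are pairwise annihilating. Thus $J_i:=\bar{A}_+ + \sum_{j\ne i}A^{(j)}_+$ is an ideal of $A$ with
\[ A/J_i\;\cong\; A^{(i)} \times \kk^{|\cQ_0\setminus\cQ^{(i)}_0|}, \]
the second factor arising from idempotents $\epsilon_v$ with $v\notin\cQ^{(i)}_0$, which annihilate $A^{(i)}_+$ on both sides. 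A short check shows $A^{(i)}\cong\kk\cQ^{(i)}$ carries no relations: if some path $p\in\kk\cQ^{(i)}$ lay in $\cI$, then $p\le c^n$ for the primitive cycle $c$ generating $\gamma_i$ would force $c^n\in\cI$, contradicting that $\gamma_i$ is infinite. Since the path algebra of a single cycle quiver is Morita equivalent to $\kk[x]$, it is semiprimitive, so the image of any $x\in\rJ(A)$ in $A/J_i$ lies in $\rJ(A^{(i)})\times 0=0$. The $A^{(i)}_+$-component of every $x\in\rJ(A)$ therefore vanishes, and varying $i$ gives $\rJ(A)\subseteq \bar{A}_+$.

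The main obstacle I expect is the semiprimitivity of the cyclic quiver path algebra $\kk\cQ^{(i)}$; one can either invoke Morita equivalence with $\kk[x]$ or, failing that, argue directly by observing that $1-c$ has no inverse in $\kk\cQ^{(i)}$ (the formal geometric series does not terminate) and combining this with the ideal structure of the cycle to rule out nontrivial elements of the Jacobson radical. The secondary delicate point is establishing the pairwise orthogonality of the ideals $\bar{A}_+$ and $A^{(j)}_+$, which rests on the characterization of paths through their maximal paths in Corollary \ref{cor:mpstr}.
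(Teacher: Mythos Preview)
Your reduction of $\rJ(A)\subseteq\bar{A}_+$ to the semiprimitivity of each $A^{(i)}$ is correct and is a clean way to organise the argument. The gap is in your justification of that semiprimitivity: the claim that $A^{(i)}\cong\kk\cQ^{(i)}$ as a free path algebra, and that $\cQ^{(i)}$ is ``a single cycle quiver'' Morita equivalent to $\kk[x]$, is false in general. The arrows of $\cQ^{(i)}$ are those of a primitive generating cycle $c=\alpha_1\cdots\alpha_n$ of $\gamma_i$, but the vertices $s(\alpha_j)$ need not be distinct, so $\cQ^{(i)}$ can have vertices of in- and out-degree $2$. In that case $\kk\cQ^{(i)}$ contains paths that are \emph{not} subpaths of any $c^n$, and such paths may well lie in $\cI$. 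Concretely, take $\cQ$ to be one vertex with two loops $a,b$ and $\cI=\langle a^2,b^2\rangle$; then $\gamma=(ab)^\infty$ is the unique infinite maximal path, $\cQ^{(1)}=\cQ$, and $A^{(1)}=\kk\langle a,b\rangle/\langle a^2,b^2\rangle$ is certainly not the path algebra of a cycle, nor Morita equivalent to $\kk[x]$. Your sentence ``if some path $p\in\kk\cQ^{(i)}$ lay in $\cI$, then $p\le c^n$\ldots'' is precisely the step that fails: $a^2$ is a path in $\cQ^{(1)}$ but is not a subpath of any $(ab)^n$.

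The paper sidesteps this by proving the needed non-invertibility directly inside $A$: given $x\in\rJ(A)$ with a nonzero coefficient on some $w\le\gamma_i$, it manufactures $\alpha x w'=p(\gamma_r(\alpha))$ for a nonzero $p\in t\kk[t]$, and then shows by hand that $\one+p(\gamma_r(\alpha))$ cannot be inverted in $A$ (any putative inverse forces a polynomial identity $(1+p)q_0=1$ in $\kk[t]$). This is essentially your ``failing that'' backup idea, but carried out in $A$ (equivalently in $A^{(i)}$) rather than in the free algebra $\kk\cQ^{(i)}$. If you want to keep your quotient-by-$J_i$ framework, you can: just replace the Morita argument by this explicit verification that $\gamma_r(\alpha)\notin\rJ(A^{(i)})$ for each arrow $\alpha\le\gamma_i$, together with the observation that $\rJ(A^{(i)})\subseteq A^{(i)}_+$ and that left/right multiplication by suitable subpaths sends any nonzero element of $A^{(i)}_+$ to a nonzero polynomial in $\gamma_r(\alpha)$.
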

\begin{proof}
By Corollary \ref{cor:mpstr}, there exists $n_\alpha$ such that any path containing $\alpha$ has length at most $n_\alpha$ for each $\alpha\in \cQ_1^{(0)}$. Taking $N=\max\setst{n_\alpha}{\alpha\in\cQ_1^{(0)}}$, we have $\bar{A}_+\subseteq A_{\leq N}$. Thus $\bar{A}_+$ is a nilpotent ideal of $A$ and so $\bar{A}_+\subseteq \rJ(A)$.

    Assume for contradiction that $x\in \rJ(A)\setminus \bar{A}_+$. Then there is an infinite maximal path $\gamma$ and nonstationary $w\leq \gamma$ such that $w$ appears with nonzero coefficient in $x$. As $\gamma$ is infinite, there exists $\alpha\in \cQ_1$ such that $\alpha w\notin\cI$. Then, there exists $w'\in\cB_+$ such that $\alpha w w'=(\gamma_r(\alpha))^k$ for some $k>0$.
    Then, $\alpha xw'=p(\gamma_r(\alpha))$ for some nonzero polynomial $p(t)\in t\kk[t]$ (as $F(\gamma_r(\alpha)) A L(\gamma_r(\alpha))\subseteq \kk[\gamma_r(\alpha)]$). As $x\in \rJ(A)$, we have $\one+p(\gamma_r(\alpha))\in A^\times$. Consider $(\one+p(\gamma_r(\alpha)))(\one+p(\gamma_r(\alpha)))^{-1}$. Left multiplication by $p(\gamma_r(a))$ will annihilate any path which is not an initial subpath of some power of  $\gamma_r(\alpha)$.
    Letting $w_j$ denote the initial subpath of length $j$ of $\gamma_r(\alpha)$ (so $w_{\ell(\gamma_r(\alpha))}=\gamma_r(\alpha)$),
    there exist $q_j(t)\in \kk[t]$ such that
    \begin{equation*}
        y\defeq (\one+p(\gamma_r(\alpha)))^{-1}-\left(\sum_{j=0}^{\ell(\gamma_r(\alpha))-1}w_jq_j(\gamma_r(\alpha))\right),
    \end{equation*}
    does not contain any initial subpath of $(\gamma_r(\alpha))^k$ with nonzero coefficient. Then,
    \begin{equation*}
        \one =(\one+p(\gamma_r(\alpha)))(\one+p(\gamma_r(\alpha)))^{-1} = y+\sum_{j=0}^{\ell((\gamma_r(\alpha)))-1}w_j((1+p)q_j)(\gamma_r(\alpha)).
    \end{equation*}
    Since $y$ does not contain any initial subpath of $(\gamma_r(\alpha))^k$ with nonzero coefficient, we must have 
    \begin{equation*}
        \sum_{j=0}^{\ell((\gamma_r(\alpha)))-1}w_j((1+p)q_j)(\gamma_r(\alpha))=w_0.
    \end{equation*}
     As $\set{w_j}_{\substack{0\leq j<\ell(\gamma_r(\alpha))}}$ is linearly independent over $\kk[\gamma_r(\alpha)]$, we obtain
    \begin{equation*}
    (1+p(t))q_0(t)=1
    \end{equation*}
    by considering the coefficient of $w_0$.
    As $p(t)\in t\kk[t]\setminus\{0\}$, this is a contradiction.
    \end{proof}
    In Section \ref{sec:ump}, we will make use of the locally gentle algebras with unique maximal paths. It is useful to consider such algebras as $\kk[x]$-algebras, which is possible in light of the following element. 
\begin{defn}\label{def:mgamma}
    For an infinite maximal path $\gamma$, we define $m_\gamma$ to be the sum of all the corresponding generating cycles. That is, $m_\gamma= \sum_{\setst{\alpha\in\cQ_1}{\gamma_\alpha=\gamma}} \gamma_r(\alpha)$.
\end{defn}
This $m_\gamma$ is a central element of $A$ and $\kk[m_\gamma]$ is isomorphic to the polynomial ring in one variable. In a (locally) gentle algebra $A$, if $\alpha\in\cQ_1$ is such that $\alpha A\alpha\neq 0$, then $\gamma_\alpha$ is infinite and $\alpha A\alpha$ is spanned by $\{\alpha m_{\gamma_\alpha}^k\}_{k>0}$. As any (locally) string algebra is a quotient of a (locally) gentle algebra by paths, it follows that if $aAa\neq 0$ for $A$ (locally) string and $a\in\cQ_1$, then $\gamma_a$ is infinite in any corresponding (locally) gentle algebra and so there is a homogeneous, central element $z_\alpha$ such that $\alpha A\alpha$ is spanned by $\{\alpha z_\alpha^k\}_{k>0}$. This $z_\alpha$ is the sum of rotations of the cycle $\gamma_r(\alpha)$. If $\alpha$ is contained in an infinite maximal path in $A$, then $z_\alpha=m_{\gamma_\alpha}$.

It was shown in \cite{FOZ} that the center of a (locally) gentle algebra is generated as a $\kk$-algebra by the elements $m_\gamma$ and the finite maximal paths which are cycles. There may be other generators in the (locally) string case, but none may be of degree zero.
\begin{lemma}\cite{FOZ}\label{lem:deg0ctr} If $A$ is (locally) string, then $Z(A)_0=\kk\one$. 

\end{lemma}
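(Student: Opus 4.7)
The plan is to reduce centrality in degree zero to a statement about vertex coefficients and then invoke connectedness of $\cQ$. First I would observe that since $A$ is graded by path length, $A_0 = \spn_\kk\{\epsilon_v : v \in \cQ_0\}$ and hence any element of $Z(A)_0$ can be written as $z = \sum_{v\in\cQ_0} c_v \epsilon_v$ for some $c_v \in \kk$.

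Next I would exploit centrality against arrows. For any $\alpha \in \cQ_1$ with $s(\alpha) = u$ and $t(\alpha) = w$, the relations $\epsilon_v \alpha = \delta_{vu}\, \alpha$ and $\alpha\epsilon_v = \delta_{vw}\,\alpha$ give
\[
z\alpha = c_{s(\alpha)}\,\alpha \qquad\text{and}\qquad \alpha z = c_{t(\alpha)}\,\alpha.
\]
Before concluding $c_{s(\alpha)} = c_{t(\alpha)}$, one needs that $\alpha \neq 0$ in $A$. This is where I would use the (locally) string hypothesis: condition (iv) says $\cI$ is generated by paths, and in the string case (v) makes $\cI$ admissible; in the locally string case one may WLOG assume no arrow is a generator of $\cI$ (otherwise remove it from $\cQ_1$ without changing $A$), so $\alpha \notin \cI$ and therefore $\alpha \neq 0$ in $A$. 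Cancelling $\alpha$ then gives $c_{s(\alpha)} = c_{t(\alpha)}$ for every arrow.

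Finally, by Definition~\ref{def:string}(i) the underlying graph of $\cQ$ is connected, so the equivalence relation on $\cQ_0$ generated by $u \sim v$ whenever there is an arrow between $u$ and $v$ has a single class. Iterating the equality $c_{s(\alpha)} = c_{t(\alpha)}$ along a path in the underlying graph joining any two vertices then forces $c_v$ to be a common constant $c \in \kk$, giving $z = c\,\sum_v \epsilon_v = c\,\one$.

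The computation is completely routine once arrows are known to be nonzero in $A$; the only mildly subtle point is the verification that $\alpha \neq 0$, and this is handled uniformly by the conventions on $\cI$ built into the definition of a (locally) string algebra. Connectedness of $\cQ$ is essential and supplies the final gluing step.
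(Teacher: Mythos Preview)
Your argument is correct and is essentially identical to the paper's own proof: write a degree-zero central element as $\sum_v c_v\epsilon_v$, commute with an arrow to obtain $c_{s(\alpha)}=c_{t(\alpha)}$, and propagate along the connected underlying graph. The only difference is that you pause to justify $\alpha\neq 0$ in $A$, which the paper leaves implicit (arrows belong to the path basis $\cB$ by convention); this is a harmless addition, not a different route.
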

\begin{proof}
    Suppose that $x\in Z(A)_0$. Then, $x=\sum_{i\in\cQ_0}\lambda_i\epsilon_i$ for some $\lambda_i\in \kk$.
    For $\alpha\in\cQ_0$, we have
    \[\lambda_{\epsilon_{s(\alpha)}}\alpha=x\alpha=\alpha x=\lambda_{\epsilon_{t(\alpha)}}\alpha,\]
    so $\lambda_{\epsilon_{s(\alpha)}}=\lambda_{\epsilon_{t(\alpha)}}$. That is, $\lambda_i=\lambda_j$ if there is an arrow connecting $i$ and $j$. As $\cQ$ is connected, it follows that $\lambda_i=\lambda_j$ for any $i,j\in\cQ_0$ and thus
$x \in \kk\one $.
\end{proof}

\section{Initial properties of string automorphisms}\label{sec:ppoaut}
In \cite{Bl98}, it was shown that each automorphism of a string algebra has a corresponding automorphism of the quiver which preserves the ideal of relations (i.e., a corresponding graded automorphism). 
In Lemma \ref{lem:aut0}, we provide an adjusted version of this result for (locally) string algebras and, in Lemma \ref{lem:autpart}, we show that if the corresponding graded automorphism is the identity then an arrow $\alpha$ will be sent into the ideal it generates, up to a finite maximal path.
Towards those ends, we provide the following observation.

\begin{lemma}\label{lem:gengraut} 
Suppose $\Lambda=\bigoplus_{i=0}^\infty \Lambda_i$ is a locally-finite $\NN$-graded algebra. If $f\in \Aut(\Lambda)$ satisfies $f(\Lambda_{n})\subseteq \Lambda_{\geq n}$ for each $n\in\NN$, then there is a graded automorphism $\sigma$ of $\Lambda$ such that $f(x)\in \sigma (x)+\Lambda_{>n}$ for any $x\in \Lambda_n$ and $n\in\NN$.
	\end{lemma}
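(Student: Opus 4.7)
The plan is to define $\sigma$ directly as the ``leading term'' of $f$ and verify the three things that need verification: that $\sigma$ is well-defined and linear, that it is an algebra homomorphism, and that it is an automorphism. Concretely, for $x\in\Lambda_n$, set $\sigma(x)\defeq \pi_n(f(x))$, where $\pi_n:\Lambda\to\Lambda_n$ is the projection onto the $n$th graded piece, and extend linearly. The hypothesis $f(\Lambda_n)\subseteq \Lambda_{\geq n}$ gives the decomposition $f(x)=\sigma(x)+r(x)$ with $r(x)\in\Lambda_{>n}$, so the desired formula $f(x)\in \sigma(x)+\Lambda_{>n}$ is built into the definition.

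That $\sigma$ is an algebra homomorphism follows by a routine ``top component'' calculation: for $x\in\Lambda_n$, $y\in\Lambda_m$, write $f(x)=\sigma(x)+u$ and $f(y)=\sigma(y)+v$ with $u\in\Lambda_{>n}$, $v\in\Lambda_{>m}$; then $f(xy)=f(x)f(y)=\sigma(x)\sigma(y)+(\sigma(x)v+u\sigma(y)+uv)$ where the parenthesized term lies in $\Lambda_{>n+m}$, so applying $\pi_{n+m}$ yields $\sigma(xy)=\sigma(x)\sigma(y)$. Unitality is immediate since $f(\one)=\one\in\Lambda_0$, so $\sigma(\one)=\one$.

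The main obstacle is showing $\sigma$ is bijective, since we do not a priori know that $f^{-1}$ also preserves the filtration $F_n\defeq \Lambda_{\geq n}$. Here is where the locally-finite hypothesis is essential. Since $f(F_n)\subseteq F_n$, the map $f$ descends to a linear endomorphism $\bar f_n:\Lambda/F_n\to \Lambda/F_n$ of the finite-dimensional space $\Lambda/F_n\cong \bigoplus_{i<n}\Lambda_i$. This $\bar f_n$ is surjective: for any $y\in\Lambda$, we have $y=f(f^{-1}(y))$, so $\overline{y}=\bar f_n(\overline{f^{-1}(y)})$. By finite-dimensionality, $\bar f_n$ is therefore an isomorphism. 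Now using the identification $\Lambda/F_n=\bigoplus_{i<n}\Lambda_i$ and the decomposition $f(x)=\sigma(x)+r(x)$ with $r(x)\in\Lambda_{>i}$ for $x\in\Lambda_i$, the matrix of $\bar f_n$ with respect to the graded decomposition is block upper-triangular with diagonal blocks $\sigma|_{\Lambda_i}$ for $0\leq i<n$. Since $\bar f_n$ is invertible, so is each diagonal block. As this holds for every $n$, $\sigma|_{\Lambda_n}$ is bijective for every $n$, and $\sigma$ is a graded automorphism of $\Lambda$.
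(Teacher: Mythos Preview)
Your proof is correct and follows essentially the same approach as the paper: define $\sigma$ as the leading graded component of $f$, verify multiplicativity by the same top-degree calculation, and then use local finiteness to establish bijectivity on each graded piece. The only difference is in packaging the last step---the paper runs an explicit induction on $n$ with chosen bases to show $\sigma|_{\Lambda_{\le n}}$ is surjective, whereas your observation that the induced map on $\Lambda/\Lambda_{\ge n}$ is block-triangular with diagonal blocks $\sigma|_{\Lambda_i}$ reaches the same conclusion more cleanly.
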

 
 \begin{proof} Assume that $f\in \Aut(\Lambda)$ satisfies $f(\Lambda_{n})\subseteq \Lambda_{\geq n}$ for each $n\in\NN$. Then, we may defined a graded map $\sigma:A\rightarrow A$ so that $f(x)\in \sigma(x)+\Lambda_{>n}$ for any $x\in \Lambda_n$ and $n\in\NN$. For any $m,n\in\NN$ and $x\in \Lambda_m$, $y\in \Lambda_n$, the degree $mn$ portion of $f(xy)=f(x)f(y)$ is $\sigma(x)\sigma(y)$, so $\gr f$ is a graded endomorphism. We will use the fact that $\Lambda$ is locally finite to show that $\sigma\vert_{\Lambda_{\leq n}}:\Lambda_{\leq n}\rightarrow \Lambda_{\leq n}$ is bijective for each $n$ via induction. For each $k$, let $\{v^{(k)}_j: 1\leq j\leq {\dim_\kk(\Lambda_k)}\}$ be a $\kk$-basis for $\Lambda_k$. 
 Suppose that $\sigma\vert_{\Lambda_{< n}}$ is bijective (this is trivially true if $n=0$) and let $x\in \Lambda_{n}$. As $f$ is surjective, 
 \[x=\sum_{k\in\NN}\sum_j\lambda^{(k)}_jf(v^{(k)}_j).\]
 for some $\lambda^{(n)}_j\in\kk$. Let $m=\min\{k\in \NN:\exists j (\lambda^{(k)}_j\neq 0)\}$.  As $f(v^{(k)}_j)\in \sigma(v^{(k)}_j)+A_{> k}$ for each $k$, 
 \[\sum_{j=1}^{\dim_\kk(\Lambda_m)}\lambda^{(m)}_j\sigma(v^{(m)}_j)=\sigma\left(\sum_{j=1}^{\dim_\kk(\Lambda_m)}\lambda^{(m)}_jv^{(m)}_j\right)\]
must be the degree $m$ component of $x$. By our choice of $m$, there exists $j$ such that $\lambda^{(m)}_j\neq 0$ and thus $\sum_{j=1}^{\dim_\kk(\Lambda_m)}\lambda^{(m)}_jv^{(m)}_j\neq 0$.
Assume for contradiction that $m< n.$
  Then $\sigma\vert_{\Lambda_{< n}}$ is bijective, so $x$ has a nonzero degree $m$ component. This contradicts our assumption that $x\in \Lambda_{n}$, so it must be that $m\geq n$. Consequently, the degree $n$ component of $x$ is
$\sum_{j=1}^{\dim_\kk(\Lambda_n)}\lambda^{(n)}_j\sigma(v^{(n)}_j).$
  Since $x\in \Lambda_{n}$, 
  \[x=\sum_{j=1}^{\dim_\kk(\Lambda_n)}\lambda^{(n)}_j\sigma(v^{(n)}_j).\]
It follows that $\sigma\vert_{\Lambda_{\leq n}}$ is surjective and hence bijective (as $\Lambda_{\leq n}$ is finite-dimensional). As $\sigma\vert_{\Lambda_{\leq n}}$ is bijective for all $n$, $\sigma$ is a graded automorphism of $\Lambda$.
\end{proof}

The following lemma gives sufficient conditions on $\Lambda$ to guarantee that $f(\Lambda_{n})\subseteq \Lambda_{\geq n}$ for each $n\in\NN$ and $f\in \Aut(\Lambda)$.

\begin{lemma}\label{lem:Jautpres} 
    Suppose $\Lambda$ is a homogeneous quotient of a path algebra on a finite quiver, so that $\Lambda$ is graded by path length and locally finite. If every loop in the quiver is nilpotent in $\Lambda$ or if $\Lambda\not\cong \kk[x]$ is  (locally) string, then every $f\in \Aut(\Lambda)$ satisfies $f(\Lambda_{n})\subseteq \Lambda_{\geq n}$ for all $n\in\NN$. In particular, under these conditions, $\grJ \Lambda$ is an algebraic invariant of $\Lambda$.
\end{lemma}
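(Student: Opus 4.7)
The plan is to reduce the statement to showing that every $f\in\Aut(\Lambda)$ preserves $\Lambda_+=\bigoplus_{n\geq 1}\Lambda_n$. Since the arrows in $\cQ_1\subseteq \Lambda_1$ generate $\Lambda_+$ as an ideal and the grading is by path length, we have $\Lambda_+^n=\Lambda_{\geq n}$, so $f(\Lambda_+)\subseteq \Lambda_+$ immediately gives $f(\Lambda_n)\subseteq f(\Lambda_+^n)\subseteq \Lambda_+^n=\Lambda_{\geq n}$.

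To show $f(\Lambda_+)\subseteq \Lambda_+$, pass to the quotient $\pi:\Lambda\to\Lambda/\Lambda_+=\Lambda_0\cong \kk^{\cQ_0}$ and analyze $\pi\circ f$ on generators. Any nonzero idempotent in $\Lambda$ has a nonzero degree-$0$ component (from $e=e^2$ at the lowest degree), so $\{\pi(f(\epsilon_v))\}_{v\in \cQ_0}$ is a family of $|\cQ_0|$ pairwise orthogonal nonzero idempotents summing to $\one$ in $\Lambda_0=\kk^{|\cQ_0|}$. This forces each $\pi(f(\epsilon_v))=\epsilon_{\sigma(v)}$ for some permutation $\sigma$ of $\cQ_0$, so for an arrow $\alpha:v\to w$ we get $\pi(f(\alpha))=\epsilon_{\sigma(v)}\pi(f(\alpha))\epsilon_{\sigma(w)}$, which vanishes automatically if $v\neq w$ and equals $c\epsilon_{\sigma(v)}$ for some $c\in \kk$ if $\alpha$ is a loop. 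The whole problem reduces to showing $c=0$ for every loop.

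For case (a), $\alpha$ is nilpotent, hence $f(\alpha)$ is nilpotent, making its degree-$0$ component $c\epsilon_{\sigma(v)}$ nilpotent and forcing $c=0$. For case (b), nilpotent loops are handled identically; for a non-nilpotent loop $\alpha$ at $v$, I will use connectedness of $\cQ$ together with $\Lambda\not\cong \kk[x]$ to produce some arrow $\beta\neq \alpha$ incident to $v$ (either $\cQ$ would otherwise consist solely of $v$ and $\alpha$, giving $\Lambda=\kk[x]$, or $v$ would be disconnected from the rest). Conditions (ii)-(iii) of Definition~\ref{def:string} applied at $v$ together with $\alpha^2\notin \cI$ then force $\alpha\beta\in \cI$ (or $\beta\alpha\in \cI$ symmetrically); say the former. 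Applying $f$ gives $f(\alpha)f(\beta)=0$ in $\Lambda$, with $f(\beta)\neq 0$ by injectivity. Writing $f(\alpha)=c\epsilon_{\sigma(v)}+y$ with $y\in \Lambda_+$, and letting $m$ be the minimum nonzero degree of $f(\beta)$, the degree-$m$ component of $f(\alpha)f(\beta)=0$ reduces (since $yf(\beta)\in\Lambda_{\geq m+1}$) to $c\epsilon_{\sigma(v)}f(\beta)_m=0$. Using $f(\beta)=f(\epsilon_v)f(\beta)$ in degree $m$ then yields $f(\beta)_m=\epsilon_{\sigma(v)}f(\beta)_m$ (positive-degree parts of $f(\epsilon_v)$ pair with $f(\beta)_{<m}=0$), so $c\neq 0$ would force $f(\beta)_m=0$, contradicting the choice of $m$.

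The main obstacle is case (b) with non-nilpotent loops: by Proposition~\ref{prop:Jrad}, $\rJ(\Lambda)=\bar{\Lambda}_+$ is strictly smaller than $\Lambda_+$ in general, and $\Lambda$ admits many $1$-dimensional characters with $\chi(\alpha)\neq 0$, so neither a Jacobson-radical argument nor a character-theoretic argument distinguishes $\Lambda_+$ among ideals of codimension $|\cQ_0|$. The resolution is the combinatorial relation $\alpha\beta\in \cI$ forced by the (locally) string axioms, combined with the leading-term analysis above. Once $f(\Lambda_+)=\Lambda_+$ is established, the final assertion that $\grJ\Lambda$ is an algebraic invariant follows: $\Lambda_0$ is semisimple, so $\grJ\Lambda=\Lambda_+$.
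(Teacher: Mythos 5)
Your proposal is correct, and while its skeleton matches the paper's (reduce to showing each arrow has vanishing degree-zero image, dispose of nilpotent arrows immediately, then fight the non-nilpotent loops), the mechanism you use in the crucial case is genuinely different. For a non-nilpotent loop $\alpha$ at $v$, the paper observes that $\alpha$ is central (it is $m_{\alpha^\infty}$), so $f(\alpha)$ is central, and then invokes Lemma \ref{lem:deg0ctr} ($Z(\Lambda)_0=\kk\one$) to write $f(\alpha)=\lambda\one+x$ with $x\in\Lambda_+$; you instead pin down the degree-zero part as $c\epsilon_{\sigma(v)}$ via the orthogonal-idempotent count in $\Lambda_0\cong\kk^{\cQ_0}$, which forces $\pi(f(\epsilon_v))=\epsilon_{\sigma(v)}$ for a permutation $\sigma$. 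After that, both arguments kill the offending scalar identically: produce an arrow $\beta$ with $\alpha\beta\in\cI$ (you justify its existence carefully from connectedness, $\Lambda\not\cong\kk[x]$, and conditions (ii)--(iii) of Definition \ref{def:string} together with $\alpha^2\notin\cI$, where the paper asserts it tersely) and compare lowest-degree components of $f(\alpha)f(\beta)=0$. What your route buys: it is self-contained, needing neither the centrality of non-nilpotent loops nor the center computation of Lemma \ref{lem:deg0ctr}, and your idempotent step recovers for free the vertex-permutation fact that the paper only cites later (from \cite{JG}) when proving $\gr(f)\in\Hhat{A}$; it also handles non-loop arrows uniformly rather than via $\alpha^2=0$. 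What the paper's route buys: brevity, since the structural facts about centers of (locally) string algebras are already established and are reused elsewhere.
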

\begin{proof}
    Since $\Lambda$ is a quotient of a path algebra on a finite quiver graded by path length, it is generated by the stationary paths and arrows. Certainly, any $f\in \Aut(\Lambda)$ satisfies $f(\Lambda_0)\subseteq \Lambda_{\geq 0}=\Lambda$, so it suffices to show that $f(\alpha)\subseteq \Lambda_+$ for any arrow $\alpha$. If $\alpha^n=0$ for some $n$, this is clear, as $f(\alpha)^n=0$ implies that the coefficient of any stationary path in $f(\alpha)$ must be 0. Since any non-loop arrow is necessarily nilpotent in $\Lambda$, we see that it suffices that every loop in the quiver is nilpotent in $\Lambda$.

    Suppose instead that $\Lambda=\kk\cQ/\cI\not\cong \kk[x]$ is locally string and that $\alpha\in\cQ_1$ is a loop such that $\alpha^n\notin \cI$ for any $n\in \NN$. Then $\alpha$ generates an infinite maximal path $\alpha^\infty$ and  $m_{\alpha^\infty}=\alpha\in Z(\Lambda)$. As $f$ is an isomorphism, we have $f(\alpha)\in Z(\Lambda)$ and hence $f(\alpha)=\lambda \one+x$ for some $x\in \Lambda_+$ by Lemma \ref{lem:deg0ctr}.  
 Since $\Lambda\not\cong \kk[x]$, there must exist an arrow $\beta\in\cQ_1$ such that $\alpha \beta\in\cI$. Thus $f(\alpha)f(\beta)=0$, implying that $xf(\beta)=-\lambda f(\beta).$ Since $f$ is an automorphism, $f(\beta)\neq 0$, so there exists a maximal $i$ such that $f(\beta)\in \Lambda_{\geq i}$. As $\lambda f(\beta) = -xf(\beta)\in \Lambda_{> i}$,  it must that $\lambda=0$. Thus, we see that $f(\alpha)\in \Lambda_+$ in this case as well.
\end{proof}

\begin{lemma}\label{lem:aut0} Suppose $A=\kk\cQ/\cI$ is (locally) string. For each automorphism $f\in \Aut(A)$, there is a corresponding graded automorphism $\gr f$ and
\[1\rightarrow \Aut_0(A)\hookrightarrow \Aut(A)\xrightarrow[f\mapsto \gr f]{} \grAut(A)\rightarrow 1\]
is a short exact sequence, where $\Aut_0(A)$ denotes the automorphisms of $A$ for which the associated graded automorphism is the identity. If $A\not\cong \kk[x]$, then $f(w)\in \gr f(w)+A_{>\ell(w)}$ for all $w\in\cB$.
	\end{lemma}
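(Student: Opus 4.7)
The plan is to combine Lemma \ref{lem:gengraut} with Lemma \ref{lem:Jautpres} to produce $\gr f$ together with the claimed filtration-lifting property, and then to verify short exactness by direct inspection. First assume $A \not\cong \kk[x]$. Since $\cQ$ is finite and $A$ is graded by path length, $A$ is a locally finite $\NN$-graded algebra. Lemma \ref{lem:Jautpres} then guarantees that every $f \in \Aut(A)$ satisfies $f(A_n) \subseteq A_{\geq n}$ for all $n \in \NN$, and Lemma \ref{lem:gengraut} produces a graded automorphism $\gr f$ with $f(x) \in \gr f(x) + A_{>n}$ for every homogeneous $x \in A_n$. Specializing to the path basis yields $f(w) \in \gr f(w) + A_{>\ell(w)}$ for $w \in \cB$.

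Next I would check that $f \mapsto \gr f$ is a group homomorphism. Fix $f,g \in \Aut(A)$ and $x \in A_n$, and write $g(x) = \gr g(x) + y$ with $y \in A_{>n}$. Since $\gr g(x) \in A_n$ and $f$ preserves the filtration, $f(\gr g(x)) \in \gr f(\gr g(x)) + A_{>n}$ and $f(y) \in A_{>n}$, so $(f \circ g)(x) \in (\gr f \circ \gr g)(x) + A_{>n}$; uniqueness of the decomposition into homogeneous components gives $\gr(f \circ g) = \gr f \circ \gr g$. The kernel is $\Aut_0(A)$ by definition, and surjectivity is automatic because any $\sigma \in \grAut(A)$ already lies in $\Aut(A)$ and satisfies $\gr \sigma = \sigma$. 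This establishes the short exact sequence in the generic case.

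Finally I would handle $A \cong \kk[x]$ separately, since Lemma \ref{lem:Jautpres} explicitly excludes it. Here every automorphism has the form $f(x) = ax + b$ with $a \in \kk^\times$ and $b \in \kk$, so I would define $\gr f$ to be the graded automorphism determined by $\gr f(x) = ax$. Exactness then reduces to the standard semidirect-product description $\Aut(\kk[x]) \cong \kk \rtimes \kk^\times$, with $\Aut_0(\kk[x])$ corresponding to the translations $x \mapsto x+b$. The last displayed property must be excluded here: for $f(x) = x+1$ one has $f(x) - \gr f(x) = 1 \in A_0$, not $A_{>1}$. The main substantive content of the argument is the filtration-preservation step supplied by Lemma \ref{lem:Jautpres}; once that is available, the remainder is essentially formal, with the $\kk[x]$ exception being the only delicate bookkeeping.
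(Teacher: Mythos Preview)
Your proposal is correct and follows essentially the same approach as the paper: handle $\kk[x]$ by hand, and otherwise combine Lemma~\ref{lem:Jautpres} with Lemma~\ref{lem:gengraut} to obtain $\gr f$ and the filtration property. Your write-up is in fact more thorough than the paper's, which simply cites the two lemmas and does not explicitly verify that $f\mapsto \gr f$ is a group homomorphism or that the sequence is exact.
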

 
 \begin{proof} If $A=\kk[x]$, then an automorphism $f\in \Aut(A)$ is given by $\lambda\in\kk$ and $\gamma\in \kk^\times$ where $f(x)=\gamma x+\lambda$, so we  define $\gr f$ by $\gr f(x)=\gamma x$. In this case, $\Aut_0(A)=\{x\mapsto x+\lambda\}_{\lambda\in\kk}$.

 If $A\not\cong \kk[x]$, we saw in Lemma \ref{lem:Jautpres} that for any  $f\in \Aut(A)$, we have $f(\Lambda_{n})\subseteq \Lambda_{\geq n}$ for each $n\in\NN$. By Lemma \ref{lem:gengraut}, there exists  $\gr f\in \grAut(A)$ such that $f(w)\in \gr f(w)+A_{>\ell(w)}$. 
\end{proof}

For a finite-dimensional split $\kk$-algebra $R$, \cite{GS99} defines certain subgroups ($H_A$, $\Hhat{A}$, $\Inn^*(A)$) of $\Aut(R)$ in terms of a fixed Wedderburn--Malcev decomposition $R=B\oplus \rJ(R)$. For a locally finite, $\NN$-graded algebra, we consider analogous subgroups defined in terms of the graded Jacobson radical. While the graded Jacobson radical is not an algebraic invariant in general, Lemma \ref{lem:Jautpres} shows that $\grJ(A)=A_+$ is an algebraic invariant of $A$ if $A\not\cong \kk[x]$ is (locally) string.

\begin{notation}
    Let $\Lambda$ be a locally finite, $\NN$-graded $\kk$-algebra and let $S\defeq \Lambda/\grJ(\Lambda)$. 
    For $x\in \Lambda^\times$, let $\Delta_x$ denote conjugation by $x$, so $\Delta_x(z)=x^{-1}zx$ and
    let $\Inn^*(\Lambda)\defeq \setst{\Delta_{\one+y}}{y\in \grJ(\Lambda), \one+y\in\Lambda^\times}$, the set of inner automorphisms of $\Lambda$ which correspond to an element of $\one+\grJ(\Lambda)$.

    If $\Lambda$ is elementary (i.e., $S\cong \kk^n$ for some $k$), then we may identify $S$ with a subspace of $\Lambda$ \cite{RR19}. In this case, 
    $\Hhat{\Lambda}\defeq \setst{\phi\in\Aut(\Lambda)}{\phi(S)\subseteq S}$ and 
    $H_\Lambda\defeq \setst{\phi\in\Aut(\Lambda)}{\phi\vert_S=\id_S}$.
\end{notation}
If $A$ is (locally) string,  we may identify $S$ with $\kk\cQ_0$. So $H_A$ is the subgroup of automorphisms which fix vertices, while $\Hhat{A}$ is the subgroup of automorphisms which permute vertices. Additionally, $\grJ(A)=A_+$, so $\Inn^*(\kk[x])=1$ and otherwise $\phi(A_+)\subseteq A_+$ for any automorphism $\phi$. Thus
$\Inn^*(A)$ is a normal subgroup of $\Aut_0(A)$  and 
    \[\rJ(A)\subseteq \setst{y\in \grJ(A)}{\one+y\in A^\times}\subseteq \grJ(A).\] 
These inclusions may both be strict, as we see in the following example.
\begin{example}
    Consider the locally gentle algebra $A=\kk\cQ$ where $\cQ$ is the quiver
    \begin{center}
        \begin{tikzpicture}
            \unvtx{1}{0,0}
            \unvtx{2}{1,0}
            \draw[->](1) to[bend left] node[midway, above]{$a$} (2);
            \draw[->](2) to[bend left] node[midway, below]{$b$} (1);
        \end{tikzpicture}.
    \end{center}
        Then $\one+a$ is a unit (with inverse $1-a$), but $\one+ab$ is not (as $ab$ is homogeneous but not nilpotent). 
       Therefore, $a\in \setst{y\in A_+}{\one+y\in A^\times}\setminus \rJ(A)$ and $ab\in\grJ(A)\setminus \setst{y\in A_+}{\one+y\in A^\times}$.
\end{example}

\begin{lemma}\label{lem:Inn}
Let $A$ be (locally) string. Then
 \begin{equation}\rJ(A)\subseteq \setst{y\in A_+}{\one+y\in A^\times}\subseteq \grJ(A).\end{equation}
 The first inclusion is strict  if and only if $A$ has an infinite maximal path containing more than one arrow. The second inclusion is strict if and only if $A$ has any infinite maximal path. 
\end{lemma}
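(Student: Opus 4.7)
The inclusions themselves are immediate from Proposition~\ref{prop:Jrad}: $\grJ(A)=A_+$ handles the right-hand inclusion, and $\rJ(A)=\bar{A}_+\subseteq A_+$ combined with quasi-regularity of Jacobson-radical elements handles the left. The substantive content is the two strictness criteria, and my plan is to deduce each by combining the ideal characterization of the Jacobson radical ($y\in\rJ(A)\iff\one+xy\in A^\times$ for all $x\in A$) with the direct-sum decomposition $A_+=\bar{A}_+\oplus\bigoplus_i A^{(i)}_+$ of Lemma~\ref{lem:Abar}.

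For the second inclusion I would show equality is equivalent to $A$ having no infinite maximal path. If every $y\in A_+$ satisfies $\one+y\in A^\times$, then because $A_+$ is a two-sided ideal we have $xy\in A_+$ for any $x\in A$, hence $\one+xy\in A^\times$; the Jacobson characterization forces $A_+\subseteq\rJ(A)=\bar{A}_+$, and Lemma~\ref{lem:Abar} then makes every $A^{(i)}_+=0$. The converse is immediate: without an infinite maximal path $\bar{A}=A$, so $A_+=\rJ(A)$ is quasi-regular and equality holds.

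For the first inclusion, strictness should be equivalent to the existence of an infinite maximal path containing more than one arrow. Given such a $\gamma$, Corollary~\ref{cor:mpstr} rules out any $\alpha\leq\gamma$ being a loop with $\alpha^2\notin\cI$ (otherwise the unique infinite maximal path through $\alpha$ would equal $\gamma$, contradicting $\ell(\gamma)>1$), so some $\alpha\leq\gamma$ satisfies $\alpha^2=0$, and then $\one+\alpha$ is inverted by $\one-\alpha$ while $\alpha\in A^{(i)}_+$ lies outside $\bar{A}_+=\rJ(A)$ by Lemma~\ref{lem:Abar}. For the converse, the case with no infinite maximal path is already handled by the previous paragraph; otherwise every infinite maximal path is a single loop $\alpha_i^\infty$ at a vertex $v_i$ with $A^{(i)}_+=\alpha_i\kk[\alpha_i]$. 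Given $y\in A_+$ with $\one+y\in A^\times$, write $y=\bar{y}+\sum_i y_i$ per Lemma~\ref{lem:Abar}, and quotient by $I_i\defeq\bar{A}_+\oplus\bigoplus_{j\neq i}A^{(j)}_+$. In this single-loop setting the quotient $A/I_i$ decomposes as the direct product of rings $\kk[\alpha_i]\times\prod_{v\neq v_i}\kk$, and the image $(1+\alpha_i q_i(\alpha_i),1,\ldots,1)$ of $\one+y$ is a unit only if $q_i=0$, since the units of $\kk[x]$ are the nonzero constants. This forces $y_i=0$ for each $i$, hence $y\in\rJ(A)$.

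The main obstacle I foresee is verifying the structural claim that $A/I_i$ really is the direct product $\kk[\alpha_i]\times\prod_{v\neq v_i}\kk$: this requires checking that every arrow at $v_i$ other than $\alpha_i$ is killed in the quotient (an easy consequence of the defining conditions (ii) and (iii) of a string algebra applied at the loop $\alpha_i$) and that the remaining vertex idempotents multiply orthogonally modulo $I_i$. Once this decomposition is in hand, the rest of the argument reduces to the elementary fact that $\kk[x]$ has only scalar units.
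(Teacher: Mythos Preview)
Your proposal is correct and follows the same overall architecture as the paper: both rely on Proposition~\ref{prop:Jrad} (identifying $\rJ(A)=\bar A_+$) and the decomposition of Lemma~\ref{lem:Abar}. The tactical execution differs in two places. For the second inclusion, the paper exhibits an explicit witness ($\gamma_r(\alpha)$ is homogeneous and non-nilpotent, so $\one+\gamma_r(\alpha)\notin A^\times$), whereas you argue the contrapositive via the quasi-regularity characterization of $\rJ(A)$, which is slicker and avoids any case split on the number of arrows in the infinite path. For the single-loop converse, the paper factors $\one+y=(\one+y')\prod_i(\one+\alpha_ip_i(\alpha_i))$ inside $A$ (using orthogonality of the ideals $A^{(i)}_+$) and then argues directly that each factor $\one+\alpha_ip_i(\alpha_i)$ can only be a unit when $p_i=0$; you instead pass to the quotient $A/I_i$ and identify it with $\kk[\alpha_i]\times\prod_{v\neq v_i}\kk$. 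Your flagged obstacle is genuinely harmless: once you observe that in $A/I_i$ the only surviving nonstationary paths are the powers $\alpha_i^k$ (all based at $v_i$), the product decomposition is immediate from the orthogonality of the $\epsilon_v$. Both routes reduce to the same elementary fact that $\kk[x]^\times=\kk^\times$; the paper's factorization is marginally more self-contained, while your quotient argument makes the reduction to $\kk[x]$ more transparent.
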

\begin{proof}
    Let $\bar{A}$ be as in Lemma \ref{lem:Abar}. By Proposition \ref{prop:Jrad}, $\rJ(A)=\bar{A}_+$,  the span of the nontrivial subpaths of finite maximal paths. A string algebra has only finite maximal paths, so $\bar{A}=A$.
    
    Suppose $A$ is a locally string algebra with an infinite maximal path containing more than one arrow and let $\alpha$ be an arrow in that algebra. Then, $\alpha\neq \gamma_r(\alpha)$ is necessarily nilpotent and thus $\alpha\in\setst{y\in A_+}{\one+y\in A^\times}\setminus \rJ(A)$. However, $\gamma_r(\alpha)$ is homogeneous but not nilpotent and thus $\one+\gamma_r(\alpha)$ is not a unit. Consequently, $\gamma_r(\alpha)\in\grJ(A)\setminus \setst{y\in A_+}{\one+y\in A^\times}$.

    If $A$ is locally string algebra but none of its maximal paths contain more than one arrow, then let $\{\alpha_i\}_{i\in [n]}$ denote the arrows contained in infinite maximal paths. 
     Suppose that $y\in \setst{y\in A_+}{\Delta_{\one+y}\in \Inn^*(A)}$. As $y\in A_+$, we have $y=y'+\sum_{i=1}^n \alpha_ip_i(\alpha_i)$ for some $y'\in \bar{A}_+$ and polynomials $p_i\in \kk[x]$. By Lemma \ref{lem:Abar}, it follows that 
     \[\one+y=(\one+y')\prod_{i\in [n]} (\one+\alpha_ip_i(\alpha_i))\]
    and hence each $\one+\alpha_ip_i(\alpha_i)$ must also be a unit in $A$. The inverse of such an element must be of the same form (as $\alpha_iw=0$ if $w\in \cB_+\setminus \{\alpha_i^k\}$), but 
    \[(\one+\alpha_ip_i(\alpha_i))(\one+\alpha_iq_i(\alpha_i))=\one\]
    implies that $p_i=q_i=0$ (as $\alpha_i$ is not nilpotent). Hence $y=y'$ and $\setst{y\in A_+}{\one+y\in A^\times}=\bar{A}_+$.
\end{proof}

In particular, if $A$ is string, then our definition of $\Inn^*(A)$ matches that given in  \cite{GS99}. In that case, it is known that $\Aut(A)=\Hhat{A}\cdot \Inn^*(A)$. Note that if $\phi\in \Hhat{A}$, then  there exists $\sigma\in \grAut(A)$ such that $\sigma \circ \phi$ fixes vertices  by Lemma \ref{lem:aut0}. That is, $\Aut(A)=H_A\cdot \grAut(A)\cdot \Inn^*(A).$

\begin{lemma}\label{lem:autpart}
    Suppose that $A\not\cong \kk[x]$ is locally string and that $\phi\in \Aut_0(A)$. For $v\in\cQ_0$, we have $\phi(\epsilon_v)\in \langle \epsilon_v\rangle$. For $\alpha\in \cQ_1$, 
    we have $\phi(\alpha)\in y_\alpha+\langle \alpha\rangle$ where $y_\alpha\in A_{>1}$ is a linear combination of maximal paths parallel to $\alpha$. 
    That is, $\phi(\epsilon_v)$ is a linear combination of paths which pass through $v$, while $\phi(\alpha)$ is a linear combination of paths which contain $\alpha$ or are maximal and parallel to $\alpha$.
\end{lemma}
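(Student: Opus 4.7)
My plan is to establish the two assertions via coefficient analysis in the path basis $\cB$, exploiting multiplicativity of $\phi$ and the idempotent structure.

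For the first assertion $\phi(\epsilon_v) \in \langle \epsilon_v \rangle$, I decompose $A = \langle \epsilon_v \rangle \oplus V$ as a $\kk$-vector space, where $V = \spn_\kk\setst{p \in \cB}{p \text{ does not pass through } v}$. Writing $\phi(\epsilon_v) = e + x$ with $e \in \langle \epsilon_v \rangle$ and $x \in V$, the key observation is that $V$ is multiplicatively closed (the product of two paths avoiding $v$ either vanishes or again avoids $v$), while $\langle \epsilon_v \rangle$ is a two-sided ideal. Projecting the idempotent identity $\phi(\epsilon_v)^2 = \phi(\epsilon_v)$ onto $V$ therefore yields $x^2 = x$. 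Since the degree-zero component of $\phi(\epsilon_v)$ is $\epsilon_v \in \langle \epsilon_v \rangle$, the element $x$ lies in $A_+$, and comparing minimal-degree terms of $x$ and $x^2$ forces $x = 0$.

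For $\phi(\alpha) \in y_\alpha + \langle \alpha \rangle$, I set $z \defeq \phi(\alpha) - \alpha$ and argue by strong induction on length that every path $p \in \cB$ appearing in $z$ which does not contain $\alpha$ is a finite maximal path parallel to $\alpha$. Let $p$ be a minimal-length counterexample with coefficient $c_p \neq 0$ in $z$. To see that $p$ is parallel to $\alpha$, I apply the identity $\phi(\alpha) = \phi(\alpha)\phi(\epsilon_{t(\alpha)})$ and extract the coefficient of $p$: the contribution of $\phi(\alpha) \cdot \epsilon_{t(\alpha)}$ is $c_p$ if $t(p)=t(\alpha)$ and $0$ otherwise, while any contribution from $\phi(\alpha) \cdot (\phi(\epsilon_{t(\alpha)}) - \epsilon_{t(\alpha)})$ would come from a factorization $p = p_1 p_2$ with $\ell(p_2)\geq 1$ and $p_1$ a path in $\phi(\alpha)$ of length $<\ell(p)$. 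If $p_1 = \alpha$ then $p$ starts with $\alpha$, excluded; otherwise $p_1 \in z$ is strictly shorter and by minimality either contains $\alpha$ (excluded since $p$ does not) or is a maximal parallel path whose right-maximality contradicts the existence of the initial subpath $p_1 F(p_2)$ of $p$. Hence this contribution vanishes and we must have $t(p) = t(\alpha)$; symmetrically $s(p) = s(\alpha)$.

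To show $p$ is right maximal, suppose for contradiction there is $\gamma \in \cQ_1$ with $p\gamma \notin \cI$. Since $p$ is parallel to $\alpha$ but avoids $\alpha$, the first and last arrows of $p$ are the unique other outgoing and incoming arrows at $s(\alpha)$ and $t(\alpha)$; condition (ii) of Definition \ref{def:string}, applied at $t(\alpha)$ to the distinct incoming arrows $\alpha, L(p)$ and outgoing $\gamma$, forces $\alpha\gamma \in \cI$ (since $L(p)\gamma \notin \cI$ as a subpath of $p\gamma$). Thus $\phi(\alpha)\phi(\gamma) = 0$. Extracting the coefficient of $p\gamma$ from this product: the $\alpha \cdot (\phi(\gamma)-\gamma)$ term cannot produce $p\gamma$ since its first arrow is $\alpha$ while $F(p\gamma) = F(p) \ne \alpha$; the term $z \cdot \gamma$ contributes exactly $c_p$; and any contribution from $z \cdot (\phi(\gamma)-\gamma)$ arises from a factorization $p_1 p_2 = p\gamma$ with $p_1 \in z$ an initial subpath of $p$ of length $\leq \ell(p)-1$, which by the induction hypothesis would be a maximal parallel path yet admits the right-extension $p_1 F(p_2)$ in $\cB$. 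Therefore $c_p = 0$, a contradiction, so $p$ is right maximal; left-maximality follows by the symmetric argument. The main obstacle I expect is organizing the coefficient bookkeeping cleanly so that every factorization $p = p_1 p_2$ with a shorter $p_1$ is exhaustively accounted for and ruled out by the induction hypothesis.
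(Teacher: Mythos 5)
Your proof is correct, and it runs on the same engine as the paper's: a minimal-length-counterexample analysis of coefficients, exploiting that $\phi(x)\phi(y)=0$ whenever $xy=0$ in $A$. (Your vertex argument is the paper's argument verbatim, just phrased as a projection onto the complement of $\langle\epsilon_v\rangle$.) For the arrow statement, however, the two proofs are organized in opposite order, with the logical dependencies reversed. The paper proves \emph{maximality first}: its minimal non-maximal path $w$ admits an extending arrow $\beta$, the relation $\beta\alpha\in\cI$ is deduced from the fact that at most one arrow extends $\beta$ together with $w\notin\langle\alpha\rangle$ (no parallelism needed), and the contradiction is a \emph{descent} -- the cancellation forced in $\phi(\beta)\phi(\alpha)=0$ exhibits a strictly shorter non-maximal path in $y_\alpha$. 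Parallelism then comes for free: once $y_\alpha$ is a combination of maximal paths, $A_+y_\alpha=y_\alpha A_+=0$, so sandwiching $\phi(\alpha)=\phi(\epsilon_{s(\alpha)})\phi(\alpha)\phi(\epsilon_{t(\alpha)})$ finishes in one line. You prove \emph{parallelism first}, which costs you a coefficient computation with the inductive hypothesis where the paper gets it for free, but it buys you a cleaner derivation of the vanishing relation: with $t(p)=t(\alpha)$ in hand, condition (ii) applied to the distinct incoming arrows $\alpha\neq L(p)$ and the outgoing arrow $\gamma$ gives $\alpha\gamma\in\cI$ directly, and your contradiction is immediate ($c_p=0$, the inductive hypothesis killing all cross terms) rather than by descent. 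Both routes are valid and of comparable length.

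One detail you should make explicit: your bookkeeping quietly assumes $\phi(\alpha)-\alpha\in A_{>1}$ and $\phi(\gamma)-\gamma\in A_{>1}$ -- this is what guarantees that $p$ is nonstationary (so $F(p)$, $L(p)$ exist), and that in the factorization $p_1p_2=p\gamma$ one has $\ell(p_1)\leq\ell(p)-1$ so the inductive hypothesis applies. This is exactly the final assertion of Lemma \ref{lem:aut0} for $A\not\cong\kk[x]$; the paper's proof invokes it at the outset ("$\phi\in\Aut_0(A)$ implies $\phi(\alpha)\in y_\alpha+\langle\alpha\rangle$ for some $y_\alpha\in A_{>1}$"), and your argument needs the same citation. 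This is a citation gap, not a mathematical one.
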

\begin{proof}
As $\phi\in \Aut_0(A)$, we have $\phi(\epsilon_v)\in x_{v}+\langle \epsilon_v\rangle$ for some $x_v\in A_+$ which is a linear combination of paths which do not pass through $\epsilon_v$. As $\epsilon_v$ is idempotent, 
\[\phi(\epsilon_v)-x_v^2=\phi(\epsilon_v)^2-x_v^2\in \langle \epsilon_v\rangle,\]
so $x_v-x_v^2\in \langle \epsilon_v\rangle$. Since $x_v-x_v^2$ is a linear combination of paths not passing through $\epsilon_v$, it follows that $x_v=x_v^2$ and hence, as $x_v\in A_+$,  $x_v=0$.

 Similarly, $\phi\in \Aut_0(A)$ implies $\phi(\alpha)\in y_{\alpha}+\langle \alpha\rangle$ for some $y_\alpha\in A_{>1}$ which is a linear combination of paths which do not contain $\alpha$.
Assume for contradiction that  $y_\alpha$ is not a linear combination of maximal paths. Then there is a non-maximal path $w\in A_{>1}\setminus\langle \alpha\rangle$ which appears with nonzero coefficient in $y_\alpha$. We may assume that $w$ is of minimal length among such paths. As $w$ is not maximal, there exists an arrow $\beta$ such that $\beta w\notin\cI$ or $w\beta\notin\cI$. The cases are similar, so assume the former. As $y_\alpha\in A_+\setminus \langle \alpha\rangle$, $\beta w\notin\cI$ implies $\beta\alpha\in \cI$. Thus, 
\[0=\phi(\beta)\phi(\alpha)\in (\beta+A_{>1})\phi(\alpha)\]
As the coefficient of $\beta w$ in $\beta\phi(\alpha)$ is nonzero, there must exist $w'\in \cB_{>1}$ and  $w''\in\cB$ such that $w'w''=\beta w$ and  $w''$ occurs in $\phi(\alpha)$ with nonzero coefficient. As $w'\in \cB_{>1}$, it follows that
$w''\lneq w$. Thus $w''\notin\langle \alpha\rangle$ is not maximal and appears with nonzero coefficient in $y_\alpha$, but $\ell(w')<\ell(w)$. This contradicts our minimality assumption, so $y_\alpha$ is a linear combination of maximal paths.
 
 To see that these maximal paths must be parallel to $\alpha$, consider $\phi(\epsilon_{s(\alpha)})\phi(\alpha)\phi(\epsilon_{t(\alpha)})$. As
 $\phi(\epsilon_v)\in \epsilon_v+A_+$ for any $v\in \cQ_0$ and $A_+y_\alpha=y_\alpha A_+=0$, we see that
\[\phi(\alpha)=\phi(\epsilon_{s(\alpha)})\phi(\alpha)\phi(\epsilon_{t(\alpha)})\in \epsilon_{s(\alpha)} y_\alpha\epsilon_{t(\alpha)}+\langle a\rangle. \]
As $\epsilon_{s(\alpha)} y_\alpha\epsilon_{t(\alpha)}$ is also a linear combination of paths in $\cB\setminus \langle a\rangle$, it follows that $\epsilon_{s(\alpha)} y_\alpha\epsilon_{t(\alpha)}=y_\alpha$.
\end{proof}

\section{Exponential automorphisms and the finite-dimensional case}\label{sec:exp}
In this section, we will consider the automorphisms of (locally) string algebra which arise from derivations, which are known as exponential automorphisms, and show that if $A$ is string, then any automorphism in $\Aut_0(A)$ may be decomposed into an inner automorphism and a composition of exponential automorphisms.

\subsection{Exponential automorphisms}
\begin{lemma}\label{lem:basicder} 
Let $A$ be a (locally) string algebra and $\cB$ be the basis of paths for $A$.  
For $a\in\cQ_1$ and $p\in \cB$, there is a derivation $\delta_{(a,p)}$ of $A$ which takes $a$ to $p$ and sends every other arrow and stationary path to 0 if the following hold:
\begin{enumerate}[(i)]
    \item $p$ is parallel to $a$,
    \item $p$ is left maximal or $F(p)=a$, and
    \item $p$ is right maximal or $L(p)=a$.
\end{enumerate}
	\end{lemma}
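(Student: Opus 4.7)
The plan is first to define $\delta_{(a,p)}$ on the free path algebra $\kk\cQ$ by extending the assignment $\epsilon_v\mapsto 0$, $\alpha\mapsto 0$ for $\alpha\in\cQ_1\setminus\{a\}$, and $a\mapsto p$ via the Leibniz rule. The source/target compatibilities $\epsilon_{s(\beta)}\beta = \beta = \beta\epsilon_{t(\beta)}$ are respected precisely because $p$ is parallel to $a$ (condition (i)), so this yields a well-defined derivation of $\kk\cQ$.

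Next I would verify $\delta(\cI)\subseteq \cI$, so that $\delta$ descends to a derivation of $A$. Since $\cI$ is a monomial ideal generated by paths, it suffices to check $\delta(q)\in\cI$ for each minimal generator $q = \alpha_1\cdots\alpha_n$ of $\cI$; and as $\delta(q) = \sum_{i\,:\,\alpha_i = a}\alpha_1\cdots\alpha_{i-1}\,p\,\alpha_{i+1}\cdots\alpha_n$, I need each substituted path $q_i$ to lie in $\cI$. I split into four cases dictated by (ii) and (iii): (A) $p$ both left and right maximal; (B) $p$ left maximal with $L(p)=a$; (C) $F(p)=a$ with $p$ right maximal; (D) $F(p)=L(p)=a$. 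In cases (A), (B), (C), the relevant end of $p$ is maximal, so a subpath $\alpha_{i-1}p$ or $p\alpha_{i+1}$ of $q_i$ already lies in $\cI$; at boundary positions ($i=1$ or $i=n$) the original $q$ reappears as a prefix or suffix of $q_i$ since the non-maximal end of $p$ equals $a$. These cases are essentially formal.

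The main obstacle is case (D), where $p = a p'' a$ has $a$ at both ends and neither $\alpha_{i-1}p$ nor $p\alpha_{i+1}$ is automatically in $\cI$. Here I would show that $q$ itself appears as the initial length-$n$ segment of $q_i$, using the successor uniqueness encoded in condition (iii) of Definition \ref{def:string}: at the vertex $t(a)$ at most one arrow $\beta$ can satisfy $a\beta\notin\cI$. Both the first arrow $\mu_1$ of $p''$ and the arrow $\alpha_{i+1}$ satisfy this, the former because $p\in\cB$, the latter because $q$ is a minimal generator of length at least three, so every length-two subpath of $q$ avoids $\cI$. Hence $\mu_1 = \alpha_{i+1}$, and iterating at each subsequent vertex of $p''$ gives $\mu_j = \alpha_{i+j}$. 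Comparing $\mu_k a$ in $p$ with $\mu_k\alpha_{i+k+1}$ in $q$ then forces $\alpha_{i+k+1} = a$, and continuing by induction shows that $q$ is periodic of period $\ell(p'')+1$ from position $i$ onward. A direct index check then shows the length-$n$ prefix of $q_i$ equals $q$, so $q_i\in\cI$. The remaining short generators (length one or two, including $q = a^2$ when $a$ is a loop) and the degenerate assignment $p = a$ (where $\delta$ is the $a$-counting derivation) are handled by direct inspection.
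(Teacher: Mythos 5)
Your proposal is correct, and it reaches the result by a route that differs from the paper's in its key step. Both proofs share the unavoidable skeleton---extend $a\mapsto p$ by the Leibniz rule and show that substituting $p$ for an occurrence of $a$ never leaves $\cI$---but the paper works directly with the basis $\cB$ of $A$, verifying the Leibniz identity on all products of basis paths, which requires proving that $waw'\in\cI$ implies $wpw'\in\cI$ for arbitrary $w,w'\in\cB$; you instead build the derivation on $\kk\cQ$ via the universal property (your observation that condition (i) is exactly $\kk\cQ_0$-bimodule compatibility is the right one) and reduce, since $\cI$ is monomial, to checking $\delta(q)\in\cI$ on minimal path generators. The substantive divergence is in the hard case $F(p)=L(p)=a$: the paper quotes the structural fact recorded after Definition \ref{def:mgamma} (imported from \cite{FOZ}) that $aAa$ is spanned by $\{az_a^k\}_{k>0}$ for a central element $z_a$, so that $p=az_a^k$ and $wpw'=waw'z_a^k=0$ in one line; you re-derive the needed rigidity from scratch via successor uniqueness, showing that a minimal generator $q$ containing $a$ must be periodic of period $\ell(p'')+1$ past that occurrence, whence $q$ is a prefix of the substituted path $q_i$. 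Your argument is more elementary and self-contained---it uses nothing beyond conditions (ii)--(iii) of Definition \ref{def:string} and minimality of the generators---at the price of the periodicity induction and the separate bookkeeping for generators of length at most two; the paper's argument is shorter but leans on the central-element structure theory it has already set up. Both are valid proofs of the stated (sufficiency) direction of the lemma.
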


    While these conditions are necessary and sufficient for such a derivation to exist, we only show the latter here.    If $p_i\in \cB$ all satisfy  (i-iii) and $\lambda_i\in \kk$, we let $\delta_{(a,\sum \lambda_ip_i)}$ denote the derivation $\sum \lambda_i\delta_{(a,p_i)}$. 
\begin{proof}
    For any $p$, we may define a map $d_{(a,p)}:\cB\rightarrow A$ by letting $d(\epsilon_v)=0$ for any $v\in \cQ_0$;
    $d(a)=p$ and $d(\alpha)=0$ for any $\alpha\in \cQ_1\setminus \{a\}$; and, for $w\in\cB_{+}$,
    \[d(w)=
    \sum_{i=1}^n\alpha_1\cdots d(\alpha_i)\cdots \alpha_n\]
    where $w=\alpha_1\cdots \alpha_n$ and $\alpha_i\in \cQ_1$. Note that, if $w,w'\in\cB_+$ are such that $ww'\notin \cI$, we have
    \begin{equation} d_{(a,p)}(ww')=wd_{(a,p)}(w')+d_{(a,p)}(w)w'.\label{eq:derel}\end{equation}
    Then, we see that extending $d_{(a,p)}$ to a linear endomorphism of $A$ yields a derivation provided that (1) \eqref{eq:derel} also holds  if one (or both) of $w,w'$ are stationary and (2)   $wd_{(a,p)}(w')+d_{(a,p)}(w)w'=0$ if $ww'\in \cI$. 

    Note that \eqref{eq:derel} holds whenever one (or both) of $w,w'$ are stationary provided $d_{(a,p)}(w)\in \epsilon_{s(w)}A\epsilon_{t(w)}$ for all $w\in\cB$, which holds exactly when $p$ is parallel to $a$. This also implies that the second property holds if one or both of $w,w'$ is stationary.

    Suppose that $p$ satisfies the conditions (i-iii). Assume for contradiction that there exist $w,w'\in\cB$ such that $waw'\in \cI$ and $wpw'\notin\cI$. As $p$ is parallel to $a$, it is clear that $waw'\notin\cI$ if $w,w'$ are both stationary. Without loss of generality, we may suppose that $w$ is nonstationary. Then, $wp\notin\cI$ implies that $p$ is not left maximal and thus $F(p)=a$. As $wa\leq wp$, it follows that $wa\notin\cI$. Hence $w'$ is nonstationary as well, so $L(p)=a$. Certainly $p\neq a$, so we have $p\in a Aa$. As noted after Definition \ref{def:mgamma}, $aAa\neq 0$ implies that there exists a central element $z$ (the sum of all primitive cycles containing $a$) such that $aAa$ is spanned by $\{az^k\}_{k>0}$. In particular, $p=az^k$ for some $k$. However, this implies that $wpw'=waw'z^k\in \cI$ as well, a contradiction. Thus, if $p$ satisfies the conditions (i-iii), then $waw'\in \cI$ implies $wpw'\in\cI$.

    Suppose that $w,w'\in\cB_+$ are such that $ww'\in\cI$. Then there exist arrows $\alpha_i\in\cB$ such that $w= \alpha_1\cdots\alpha_n$ and $w'=\alpha_{n+1}\cdots \alpha_m$. By definition of $d_{(a,p)}$, we have 
    note that 
    \[wd_{(a,p)}(w')+d_{(a,p)}(w)w'=\sum_{i=1}^m \alpha_1\cdots \alpha_{i-1}d(\alpha_i)\alpha_{i+1}\cdots \alpha_m=\sum_{\alpha_i=a} \alpha_1\cdots \alpha_{i-1}p\alpha_{i+1}\cdots \alpha_m.\]
    By assumption, if $\alpha_i=a$, then 
    \[\alpha_1\cdots \alpha_{i-1}a\alpha_{i+1}\cdots \alpha_m\in\cI.\] Thus $wd_{(a,p)}(w')+d_{(a,p)}(w)w'\in\cI$ if $p$ satisfies conditions (i-iii), as desired.
    \end{proof}

\begin{defn}
    A derivation $\delta$ of $A$ is called \emph{locally nilpotent} if, for each $x\in A$, there exists $n>0$ such that $\delta^n(x)=0$.    For such a derivation, there is an automorphism $\exp(\delta)$ of $A$ given by
    \begin{align*}
        \exp(\delta)(x)=\sum_{n\geq 0}\frac{1}{n!} \delta^n(x).
    \end{align*}
    Automorphisms of this form are called \emph{exponential automorphisms} of $A$.
\end{defn}

\begin{example}
 Let $a\in\cQ_1$. Then $\delta_{(a,a)}$ is never locally nilpotent, as $\delta_{(a,a)}^n(a)=a$ for all $n>0$. \\
 \noindent For   a linear combination $y\in \epsilon_{s(a)}A_{>1}\epsilon_{t(a)}$ of maximal paths,  $\delta_{(a,y)}$ is a nilpotent derivation of $A$.
 For  $x_a\in aAa$, the derivation $\delta_{(a,x_a)}$ is locally nilpotent if $a$ is contained in a finite maximal path. 
\end{example} 

There are two types of nilpotent derivations on $A$ on which we will focus.
\begin{defn}
Let ${\cQ}^{(0)}_1$ denote the set of arrows which are not contained in any infinite maximal path (as in  Lemma \ref{lem:Abar}).
A derivation $\delta$ is  of \emph{type I} if $\delta=\sum_{a\in {\cQ}^{(0)}_1} \delta_{(a,x_a)}$ where $x_a\in aAa$.

    A derivation $\delta$ is of \emph{type II} if $\delta=\sum_{a\in {\cQ}_1} \delta_{(a,y_a)}$ where, for each $a\in\cQ_1$, $y_a\in A_{>1}$ is a linear combination of left maximal paths satisfying conditions (i-iii) of Lemma \ref{lem:basicder}.

    We say that an exponential automorphism $\exp(\delta)$ is of type I (resp. II) if $\delta$ is of type I (resp. II).
\end{defn}

\begin{lemma}\label{lem:nilpder}
    Let $A=\kk\cQ/\cI$ be a (locally) string algebra and $\cB$ the basis of paths for $A$. 

    Derivations of type I and type II are nilpotent and the corresponding exponential automorphisms belong to $H_A\cap \Aut_0(A)$. Moreover, derivations of type II commute with derivations of type I and type II. In particular, if $\alpha\in \cQ_1$ and $y\in \epsilon_{s(\alpha)}A_{>1}\epsilon_{t(\alpha)}$ is a linear combination of left maximal paths which are also right maximal or have last arrow $\alpha$,  then
    \[\delta_{(\alpha,y)}\circ \delta=\delta\circ \delta_{(\alpha,y)}=0\]
    for a derivation $\delta$ of type I or II.
\end{lemma}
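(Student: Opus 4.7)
My plan is first to establish an auxiliary fact that drives the rest of the argument: for any left maximal path $q\in\cB$ of length at least two and any derivation $\delta$ of type I or II, $\delta(q)=0$. For $\delta=\sum_a\delta_{(a,y_a)}$ of type II and $q=\beta_1\cdots\beta_m$, the Leibniz expansion reduces to $\delta(q)=y_{\beta_1}(q/\beta_1)$ since left maximality of each summand of $y_{\beta_i}$ forces $\beta_{i-1}y_{\beta_i}=0$ for $i\geq 2$. Each surviving summand $q^*$ of $y_{\beta_1}$ is either right maximal, in which case $q^*\cdot F(q/\beta_1)=0$, or satisfies $L(q^*)=\beta_1$, giving $q^*(q/\beta_1)=q^*_0\cdot q$ with $L(q^*_0)\cdot q\in\cI$ by left maximality of $q$. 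For $\delta=\sum_{a\in\cQ_1^{(0)}}\delta_{(a,x_a)}$ of type I, I will use the central element $z_a$ recalled after Definition~\ref{def:mgamma} to rewrite each replacement $q[j\to\alpha_j z_{\alpha_j}^k]$ as $q\cdot z_{\alpha_j}^k=z_{\alpha_j}^k\cdot q$; the only rotation of $\gamma_r(\alpha_j)$ contributing to $z_{\alpha_j}\cdot q$ is the one at $s(q)$, whose last arrow $\beta$ satisfies $t(\beta)=s(F(q))$, and left maximality of $q$ gives $\beta\cdot F(q)\in\cI$, killing this rotation.

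Nilpotency then splits by type. For type I, Corollary~\ref{cor:mpstr} provides a uniform length bound $N=\max_{a\in\cQ_1^{(0)}}n_a$ for paths containing any arrow of $\cQ_1^{(0)}$; each iterate of $\delta$ strictly increases length while preserving the presence of such an arrow, so $\delta^{N+1}=0$. For type II I aim for the sharper $\delta^2=0$: for $w=\alpha_1\cdots\alpha_n$, the "only $i=1$" phenomenon gives $\delta(w)=y_{\alpha_1}(w/\alpha_1)$, and the nonvanishing summands have the form $p\cdot w$ where $p\alpha_1$ is a left maximal path in $y_{\alpha_1}$ of length at least two. Rewriting $pw=(p\alpha_1)(w/\alpha_1)$ and applying Leibniz, $\delta(p\alpha_1)=0$ by the auxiliary fact, while $(p\alpha_1)\cdot\delta(w/\alpha_1)$ vanishes because $\delta(w/\alpha_1)$ is supported on terms beginning with left maximal paths, each of which is annihilated on the left by the left maximal $p\alpha_1$ via $L(p\alpha_1)\cdot(\text{left maximal})\in\cI$. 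The case $w=\alpha_1$ follows directly from the auxiliary fact applied summand by summand. Since $\delta(\epsilon_v)=0$ and $\delta(\cQ_1)\subseteq A_{\geq 2}$, nilpotency places $\exp(\delta)$ in $H_A\cap\Aut_0(A)$.

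For the in-particular identity I compute $\delta\circ\delta_{(\alpha,y)}(w)$ by cases: when $F(w)\neq\alpha$ it is immediate, while when $F(w)=\alpha$, the image $\delta_{(\alpha,y)}(w)$ splits into summands of the form $q\cdot(w/\alpha)=(q/\alpha)\cdot w$ with $L(q)=\alpha$, together with (when $w=\alpha$) right maximal summands $q$ to which the auxiliary fact applies directly. Leibniz gives $\delta(q\cdot(w/\alpha))=\delta(q)(w/\alpha)+q\cdot\delta(w/\alpha)$: the first term vanishes by the auxiliary fact, and the second by the same left-maximal-prefix mechanism used for type II nilpotency, or, for $\delta$ of type I, by pushing the central $z_{\beta_j}$ past $(w/\alpha)$ and invoking $z_{\beta_j}\cdot q=0$ established in the auxiliary fact. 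The symmetric identity $\delta_{(\alpha,y)}\circ\delta=0$ is proved along the same lines, and the general commutativity assertion for type II with type I or II reduces to these zero compositions since each type II $\delta_1$ is a sum of its generators $\delta_{(a,y_a)}$. The main subtlety I anticipate is tracking the length-one edge cases, where one must observe that a length-one left maximal arrow $p$ forces $y_p$ to consist solely of right maximal summands (otherwise a summand $q^*_0 p\in\cB$ would satisfy $L(q^*_0)\cdot p\in\cI$ by left maximality of $p$, a contradiction), and these then annihilate whatever appears on their right.
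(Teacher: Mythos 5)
Your proposal is correct and takes essentially the same approach as the paper: your auxiliary fact (derivations of type I and II annihilate left maximal paths of length at least two) is exactly what the paper reads off from its closed formulas $\delta(w)=\sum_i n_i(w)p_i(z_i)w$ and $\delta_{(\alpha,y)}(w)=y_2w$, and your nilpotency and zero-composition arguments run on the same three mechanisms (left maximality killing nonstationary left factors, centrality of the elements $z_a$, and the length bound on $\bar{A}_+$). One minor imprecision: left maximality of $q$ gives $\beta q\in\cI$, not $\beta F(q)\in\cI$ (relations in a (locally) string algebra may have length greater than two), but the weaker statement suffices where you use it, since $\beta q$ is a subpath of $cq$ and $\cI$ is generated by paths.
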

\begin{proof}
Suppose that $\delta$ is of type I. Then, there exist $\alpha_1,\ldots, \alpha_n\in \cQ^{(0)}_1$ and $x_i\in \alpha_iA\alpha_i\setminus\{0\}$ such that $\delta =\sum_{i=1}^n\delta_{(\alpha_i,x_i)}.$ Since $\alpha_iA\alpha_i\neq 0$, it is spanned by $\{\alpha_iz_i^k\}_{k> 0}$ for some $z_i\in Z(A)$ (see Section \ref{sec:prelim}). Thus, $x_i=\alpha_ip_i(z_i)$ for some polynomial $p_i$ with coefficients in $\kk$ satisfying $p_i(0)=0$. For $w\in \cB$, if we let  $n_i(w)$ denote the number of copies of $\alpha_i$ in $w$, we see that
    \[\delta_{(\alpha_i,\alpha_iz_i^j)}(w)=n_i(w)z^{j}w\]
   because $z_i$ is central. Consequently, 
    \begin{equation}\delta(w)=\sum_{i=1}^n n_i(w)p_i(z_i)w.\label{eq:type1}\end{equation}
Since $\alpha_i\in \cQ^{(0)}_1$, we have $p_i(z_i)\in \bar{A}_+$. As $\bar{A}_+$ is an ideal of $A$, it follows that $\delta(A)\subseteq \bar{A}_+$. Additionally, as $p_i(z_i)\in A_+$, we have $\delta(A_i)\subseteq A_{>i}$ and hence
$\delta^n(A)\subseteq \bar{A}_{>n}.$
However, as noted in Proposition \ref{prop:Jrad}, there exists $N$ such that $\bar{A}_{> N}=0$, so $\delta$ is nilpotent.

Suppose that $\alpha\in \cQ_1$ and $y\in \epsilon_{s(\alpha)}A_{>1}\epsilon_{t(\alpha)}$ is a linear combination of left maximal paths which are also right maximal or have last arrow $\alpha$. That is, $y=y_1+y_2a$ where $y_1\in A_{>1}$ is a linear combination of maximal paths parallel to $\alpha$ and $y_2\in A_+$ is such that $y_2\alpha$ a linear combination left maximal paths. 
Then, for any $w\in\cB_+$,
    \[\delta_{(\alpha,y)}(w)=\begin{cases} 0 & \text{ if $w\in\cB_0$,}\\y & \text{ if $w=\alpha$},\\
    y_2w & \text{ otherwise.}\end{cases}\]
    Hence $\delta_{(\alpha,y)}(w)$ is a linear combination of left maximal paths in $A_{>1}$ for every $w\in \cB$. Consequently, as equation \eqref{eq:type1} shows that $\delta$  kills left maximal paths, we see that $\delta\circ \delta_{(\alpha,y)}=0$. Additionally, since $\delta(A)\subseteq A_{>1}$ and $z_i$ are central, we see that 
    \[(\delta_{(\alpha,y)}\circ \delta)(w)=y_2\sum_{i=1}^n n_i(w)p_i(z_i)w=\sum_{i=1}^n n_i(w)p_i(z_i)y_2w.\]
    If $y_2w$ is neither zero nor left maximal, it must be that $w$ is stationary and $n_i(w)=0$ for all $i$. Thus, $\delta_{(\alpha,y)}\circ \delta=0$ as well. 
    
    If $\alpha'\in \cQ_1$ and  $y'\in \epsilon_{s(\alpha')}A_{>1}\epsilon_{t(\alpha')}$ is a linear combination of left maximal paths which are also right maximal or have last arrow $\alpha'$, then $(\delta_{(\alpha,y)}\circ \delta_{(\alpha',y')})(w)=y_2\delta_{(\alpha',y')})(w)=0$ for any $w\in\cB$, so
    \[\delta_{(\alpha,y)}\circ \delta_{(\alpha',y')}=0.\]
    Since derivations of type II are sums of such derivations, it follows they are nilpotent of degree two and commute with derivations of type I and II. As derivations of both types kill stationary paths and take $\cQ_1$ to $A_{>1}$, the corresponding exponential automorphisms belong to $H_A\cap \Aut_0(A)$.
    
\end{proof}

In particular, for any choice of arrows $\alpha_i$ and linear combinations $y_i\in \epsilon_{s(\alpha_i)}A_{>1}\epsilon_{t(\alpha_i)}$ of maximal paths, there is an automorphism which takes $\alpha_i$ to $\alpha_i+y_i$ and fixes all other paths.
\begin{example} 
Suppose $\delta = \sum_{i=1}^n\delta_{(\alpha_i,y_i)}$ where $\alpha_i\in\cQ_1$ and each $y_i\in \epsilon_{s(\alpha_i)}A_{>1}\epsilon_{t(\alpha_i)}$ is a linear combination of maximal paths. Then for any $w\in \cB$,
\[\delta(w)=\begin{cases}
    y_i & \text{ if $w=\alpha_i$ for some $i$},\\
    0 & \text{ otherwise}.
\end{cases}\]
Since $\delta^2=0$, it follows that $\exp(\delta)$ sends $\alpha_i$ to $\alpha_i+y_i$ and fixes all other elements of $\cB$.
\end{example}
This gives us the following corollary to Lemma \ref{lem:autpart}.

\begin{corollary}\label{cor:fixAbar} Let $A\not\cong \kk[x]$ be a (locally) string algebra.  Suppose that $\phi\in \Aut_0(A)$. For $\alpha\in \cQ_1$, let $y_\alpha$ be as in Lemma \ref{lem:autpart}. Then $\delta\defeq -\sum_{\alpha\in \cQ_1}\delta_{\alpha,y_\alpha}$ is a derivation of type II and $(\exp(\delta)\circ\phi)(\alpha)\in \langle \alpha\rangle$ for every $\alpha\in \cQ_1$.
\end{corollary}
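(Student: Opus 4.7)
The plan is to leverage the structural descriptions already established by Lemma \ref{lem:autpart}, Lemma \ref{lem:basicder}, and Lemma \ref{lem:nilpder}, and then perform a direct computation of $\exp(\delta)(\phi(\alpha))$ using the fact that $\delta$ acts extremely simply.

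First, I would check that $\delta$ is well-defined and of type II. By Lemma \ref{lem:autpart}, each $y_\alpha$ is a linear combination of (finite) maximal paths parallel to $\alpha$. Each such path is parallel to $\alpha$ (condition (i)), left maximal (giving condition (ii)), and right maximal (giving condition (iii)), so each $\delta_{(\alpha,y_\alpha)}$ exists by Lemma \ref{lem:basicder}. Since the paths in each $y_\alpha$ are in particular left maximal, $\delta$ meets the definition of a type II derivation.

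Next, I would show that $\delta$ acts trivially outside $\cQ_1$, so that $\exp(\delta)$ has an extremely transparent form. Let $w=\beta_1\cdots\beta_n\in\cB$ with $n\geq 2$; then for any $a\in\cQ_1$,
\[
\delta_{(a,y_a)}(w)=\sum_{i:\beta_i=a}\beta_1\cdots\beta_{i-1}\,y_a\,\beta_{i+1}\cdots\beta_n,
\]
and each term vanishes because $y_a$ is both left maximal (killing $\beta_{i-1}y_a$ for $i\geq 2$) and right maximal (killing $y_a\beta_{i+1}$ for $i\leq n-1$). For stationary $w$ the derivation also vanishes. Hence $\delta(w)=0$ for $w\in\cB\setminus\cQ_1$, while $\delta(\alpha)=-y_\alpha$ for $\alpha\in\cQ_1$. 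Since each $y_\alpha\in A_{>1}$ lies where $\delta$ already vanishes, $\delta^2=0$, so $\exp(\delta)=\id+\delta$. Consequently $\exp(\delta)$ fixes every path of length $\neq 1$ and sends each arrow $\alpha$ to $\alpha-y_\alpha$.

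Finally, I would compute. By Lemma \ref{lem:autpart} write $\phi(\alpha)=y_\alpha+z_\alpha$ with $z_\alpha\in\langle\alpha\rangle$, and by Lemma \ref{lem:aut0} we have $\phi(\alpha)\in\alpha+A_{>1}$ (using $\gr\phi=\id$). Since $y_\alpha\in A_{>1}$, this forces the coefficient of $\alpha$ in $z_\alpha$ to equal $1$, so $z_\alpha=\alpha+z_\alpha''$ with $z_\alpha''\in\langle\alpha\rangle\cap A_{>1}$. Because $y_\alpha$ and $z_\alpha''$ are $\kk$-linear combinations of basis paths of length $>1$, they are both fixed by $\exp(\delta)$, so
\[
(\exp(\delta)\circ\phi)(\alpha)=\exp(\delta)(y_\alpha)+\exp(\delta)(\alpha)+\exp(\delta)(z_\alpha'')=y_\alpha+(\alpha-y_\alpha)+z_\alpha''=\alpha+z_\alpha''\in\langle\alpha\rangle,
\]
as desired. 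The only mildly subtle point (and the closest thing to an obstacle) is the bookkeeping that pins down the $\alpha$-coefficient of $z_\alpha$ as exactly $1$; once $\gr\phi=\id$ is invoked, the cancellation of $y_\alpha$ between $\exp(\delta)(y_\alpha)$ and $\exp(\delta)(\alpha)$ is immediate.
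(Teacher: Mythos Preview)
Your proof is correct and follows the same approach as the paper: the paper presents this corollary as an immediate consequence of the preceding Example (which establishes that $\exp(\delta)$ fixes every basis path of length $\neq 1$ and sends each arrow $\alpha$ to $\alpha-y_\alpha$) together with Lemma~\ref{lem:autpart}, and your argument simply spells out those details along with the final cancellation. Your observation that one must invoke $\gr\phi=\id$ to pin down the $\alpha$-coefficient of $z_\alpha$ as $1$ is exactly the content the paper leaves implicit.
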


\subsection{String automorphisms}
In this section, we consider the case where $A$ is finite-dimensional. In this case, $\cQ^{(0)}=\cQ$ and $\bar{A}=A$.

\begin{lemma}\label{lem:p(z)type}
    Suppose $A$ is a string algebra. Let $D(A)$ denote the set of automorphisms $\phi\in H_A\cap \Aut_0(A)$ such that $\phi(w)\in \kk\setst{w'\in \cB}{F(w')=F(w), L(w')=L(w)}$ for each $w\in\cB$.
    Then $D(A)$ is the subgroup of $\Aut_0(A)$ generated by exponential automorphisms of type I. Moreover, for any choice of $x_\alpha\in \alpha A\alpha$, there exists $\rho\in D(A)$ such that $\rho(\alpha)=\alpha+x_\alpha$ for each arrow $\alpha$.
    
\end{lemma}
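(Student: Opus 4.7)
The plan is to establish four ingredients: $D(A)$ is a subgroup of $\Aut_0(A)$; every type I exponential lies in $D(A)$; the ``moreover'' claim producing $\rho\in D(A)$ realizing specified $x_\alpha\in\alpha A\alpha$; and every $\phi\in D(A)$ is a product of type I exponentials. The first two yield one containment, the fourth gives the reverse, and the third is an independent construction.

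First, $D(A)$ is closed under composition because composing preserves the span of paths with prescribed first and last arrows. For inversion, $\phi-\id$ raises degree strictly (as $\phi\in\Aut_0(A)$) and is therefore nilpotent on the finite-dimensional $A$, so $\phi^{-1}=\sum_{k\geq 0}(\id-\phi)^k$ is a finite sum in which each term preserves the same span. For the inclusion of type I exponentials in $D(A)$, let $\delta=\sum_\alpha\delta_{(\alpha,x_\alpha)}$ with $x_\alpha\in\alpha A\alpha$ and $w=\beta_1\cdots\beta_n\in\cB$; each summand of $\delta_{(\alpha,x_\alpha)}(w)$ replaces $\beta_i=\alpha$ by a path in $\alpha A\alpha$, and so begins with $\beta_1=F(w)$ (or with $\alpha=F(w)$ if $i=1$) and ends with $\beta_n=L(w)$ (or with $\alpha=L(w)$ if $i=n$). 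Thus $\delta$ preserves the span defining $D(A)$, and so does $\exp(\delta)$.

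For the ``moreover'' claim, define $\rho(\epsilon_v)=\epsilon_v$, $\rho(\alpha)=\alpha+x_\alpha$, and extend multiplicatively to a homomorphism $\tilde\rho:\kk\cQ\to A$. To descend to $A$, let $\pi=\beta_1\cdots\beta_n\in\cI$ be a generating path; expand $\tilde\rho(\pi)=\prod(\beta_i+x_{\beta_i})$. Writing each $x_{\beta_i}=\beta_i p_i(z_{\beta_i})$ for the central element $z_{\beta_i}$ recalled after Definition~\ref{def:mgamma}, every term factors as $\beta_1\beta_2\cdots\beta_n\cdot\prod_i c_i=\pi\cdot(\text{central})=0$, obtained by commuting the central factors $c_i\in\{1,p_i(z_{\beta_i})\}$ past the remaining arrows. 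Hence $\rho$ induces a well-defined homomorphism of $A$. Since $\rho(w)\in w+A_{>\ell(w)}$ for every $w\in\cB$, the map is the identity on associated graded, so $\rho$ is an automorphism of $A$ by finite-dimensionality, and the computation in the previous paragraph shows $\rho\in D(A)$.

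Finally, for the factorization of $\phi\in D(A)$ into type I exponentials, induct on the minimum degree of $\phi(\alpha)-\alpha$ over $\alpha\in\cQ_1$. Write $\phi(\alpha)=\alpha+x_\alpha$ with $x_\alpha\in\alpha A\alpha$; if $\phi\neq\id$, let $k+1$ be the smallest degree occurring in any $x_\alpha$, and set $y_\alpha$ to be the degree-$(k+1)$ homogeneous component of $x_\alpha$. The type I derivation $\delta=\sum_\alpha\delta_{(\alpha,y_\alpha)}$ satisfies $\delta(A_m)\subseteq A_{m+k}$ with $k\geq 1$; expanding gives $\exp(-\delta)(\alpha)=\alpha-y_\alpha+A_{>k+1}$ and $\exp(-\delta)(y_\alpha)=y_\alpha+A_{>k+1}$, so $(\exp(-\delta)\phi)(\alpha)\in\alpha+A_{>k+1}$. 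Thus $\exp(-\delta)\phi\in D(A)$ has strictly higher minimum error degree, and finite-dimensionality forces the process to terminate, giving $\phi=\exp(\delta_0)\exp(\delta_1)\cdots\exp(\delta_{r-1})$. The main obstacle is precisely this final induction: tracking how $\delta$ raises degrees and confirming that one subtraction step cancels the lowest-order error depends crucially on the centrality of $z_\alpha$ and the homogeneity of each $y_\alpha$.
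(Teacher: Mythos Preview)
Your proof is correct and follows essentially the same approach as the paper: the core induction (your step 4), which peels off the lowest-degree discrepancy via a type I exponential, is exactly the paper's argument. Your step 1 (explicit subgroup verification) and step 3 (direct well-definedness check for $\rho$ using centrality of $z_\alpha$) are correct but redundant, since the paper obtains both as byproducts---the induction already builds $\rho$ as a composition of type I exponentials, which simultaneously yields the ``moreover'' claim and, together with the easy containment, the subgroup description.
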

If $A$ is gentle, then $\alpha A\alpha =0$ for every arrow $\alpha$ and so $D(A)$ is trivial.

\begin{proof}
Suppose that $\delta$ is a derivation of type I. Then, as seen in Lemma \ref{lem:nilpder}, $\delta$ is nilpotent and there exist central elements $p_\alpha$ so that for any $w\in \cB$ we have
\[\delta(w)=\sum_{\alpha\in\cQ_1} n_\alpha (w)p_\alpha w,\]
where $n_\alpha(w)$ is the number of copies of $\alpha$ in $w$. It follows that 
\[\delta(w)\in \kk\setst{w'\in \cB}{F(w')=F(w), L(w')=L(w)}\]
and hence $\exp(\delta)\in D(A)$. Thus $D(A)$ contains type I exponential automorphisms.

Since $A$ is generated in degree $\leq 1$ and $\phi\in D(A)$ must fix vertices and take each $\alpha\in \cQ_1$ to $\alpha+\alpha A\alpha$, it suffices to show that for any choice of $x_\alpha\in \alpha A\alpha$, there is some composition $\rho$ of type I exponential automorphism such that $\rho(\alpha)=\alpha+x_\alpha$ for each $\alpha\in\cQ_1$.
As $A$ is finite-dimensional, there exists $N$ such that $A_{\geq N}=0$. Hence, for any choice of $x_\alpha\in \alpha A\alpha$, there is a minimal $n$ such that $x_\alpha \in A_{\geq N-n}$ for all $\alpha\in\cQ_1$. We proceed via induction on $n$. If $n=0$, then $x_\alpha=0$ for every arrow $\alpha$ and we may take $\rho=\id_A$.

Suppose $n>0$, and that such a $\rho$ exists for any choice of elements in $\alpha A\alpha\cap A_{\geq N-(n-1)}$. Let $x_\alpha'$ denote the degree $N-n$ portion of $x_\alpha$ for each $\alpha\in\cQ_2$, so that $x_\alpha-x_\alpha'\in A_{> N-n}$. 
Note that $x_\alpha'\in \alpha A\alpha$ as well, so $\delta\defeq \sum_{\alpha\in\cQ_1} \delta_{(\alpha, x_\alpha')}$ is of type I. Since $\delta(\alpha)=x_\alpha'$ and $\delta(A_i)\subseteq A_{>i}$ for any $i$, we have
$\exp(-\delta)(\alpha)\in \alpha-x_\alpha'+A_{>N-n}$ and $\exp(-\delta)(x_\alpha')\in x_\alpha'+A_{>N-n}$. Thus
\[\exp(-\delta)(\alpha+x_\alpha)\in \alpha+A_{>N-n}.\]
As $-\delta$ is type I, $\exp(-\delta)\in D(A)$ and so $\exp(-\delta)(\alpha+x_\alpha)\in \alpha+\alpha A\alpha$. Hence $\exp(-\delta)(\alpha+x_\alpha)-\alpha\in \alpha A\alpha\cap A_{\geq N-(n-1)}$, so there exists a composition $\rho$ of type I exponential automorphisms such that 
$\rho(\alpha)=\exp(-\delta)(\alpha+x_\alpha)$. Then, $\exp(\delta)\circ \rho$ is a composition of type I exponential automorphisms and $(\exp(\delta)\circ \rho)(\alpha)=\alpha+x_\alpha$, as desired. 
\end{proof}

The automorphisms described in Lemma \ref{lem:p(z)type} may be inner or outer.
\begin{example}\label{ex:innerD(a)}
    Consider $A=\kk\cQ/\langle (ab)^2,(ba)^2\rangle$ where again $\cQ$ is the quiver 
    \begin{center}
        \begin{tikzpicture}
            \unvtx{1}{0,0}
            \unvtx{2}{1,0}
            \draw[->](1) to[bend left] node[midway, above]{$a$} (2);
            \draw[->](2) to[bend left] node[midway, below]{$b$} (1);
        \end{tikzpicture}.
    \end{center}
Then $\begin{cases} a\mapsto a+aba,\\
w\mapsto w\text{ if }w\in \cB\setminus\{a\}\end{cases}$ defines an outer automorphism of $A$, while 
\[\begin{cases} 
a\mapsto a+2aba,\\
b\mapsto b-2bab,\\
w\mapsto w  \text{ if }w\in \cB\setminus\{a,b\}\end{cases}\]
defines an inner automorphism of $A$ ($\Delta_{\one-ab+ba}$).

\end{example}

We will employ a similar argument for $H_A\cap \Aut_0(A)$. To that end, we have the following lemma.
\begin{lemma}\label{lem:sgrfrm}
    Let $A$ be (locally) string and suppose $\phi\in H_A\cap \Aut_0(A)$.  There exists $n>1$ and $\lambda_{(\alpha, w)}\in\kk$ such that
    \[\phi(\alpha)=\alpha+\sum_{w\in B_{\geq n}}\lambda_{(\alpha, w)}w\]
     for each arrow $\alpha$. If $w\in B_{n}$ and $\lambda_{(\alpha, w)}\neq 0$, then
     \begin{enumerate}[(i)]
         \item $w$ is parallel to $\alpha$.
         \item If $F(w)\neq \alpha$ and $w$ is not left maximal, then there exists $w'$ such that $w=w'\alpha$ and $\lambda_{(\alpha,w)}=-\lambda_{(\beta, \beta w')}$ for some arrow $\beta$.
         \item if $F(w), L(w)\neq \alpha$, then $w$ is maximal.
     \end{enumerate}
\end{lemma}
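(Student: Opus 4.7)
The plan is to combine Lemma~\ref{lem:aut0} with the multiplicativity constraints $\phi(ab)=\phi(a)\phi(b)$ for pairs of arrows with $ab\in\cI$, extracting linear conditions on the minimal-degree coefficients of $\phi(\alpha)-\alpha$.

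By Lemma~\ref{lem:aut0}, $\phi(\alpha)\in\alpha+A_{>1}$ for every $\alpha\in\cQ_1$, so I can take $n>1$ to be the least integer for which $\phi(\alpha)-\alpha\in A_{\geq n}$ holds for all $\alpha$ (if $\phi=\id_A$ the conclusion is vacuous for any $n>1$); this immediately gives the desired expansion. Writing $\psi(\alpha)\defeq \phi(\alpha)-\alpha$ and $\psi(\alpha)^{(n)}=\sum_{w\in\cB_n}\lambda_{(\alpha,w)}w$, conclusion~(i) is immediate from $\phi\in H_A$: the identity $\phi(\alpha)=\epsilon_{s(\alpha)}\phi(\alpha)\epsilon_{t(\alpha)}$ shows that every path appearing in $\phi(\alpha)$, in particular every $w$ with $\lambda_{(\alpha,w)}\neq 0$, must be parallel to $\alpha$.

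The core technical input is the following. Whenever $a,b\in\cQ_1$ satisfy $ab\in\cI$, expanding $0=\phi(a)\phi(b)=(a+\psi(a))(b+\psi(b))$ and noting that $\psi(a)\psi(b)$ has degree $\geq 2n>n+1$ (since $n\geq 2$) yields in degree $n+1$
\[ a\,\psi(b)^{(n)}+\psi(a)^{(n)}\,b = 0.\]
A basis path $p\in\cB_{n+1}$ receives a contribution $\lambda_{(b,v)}$ from the first summand only when $F(p)=a$ (with $v$ the length-$n$ tail of $p$), and a contribution $\lambda_{(a,v)}$ from the second only when $L(p)=b$ (with $v$ the length-$n$ head of $p$).

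I would then deduce (iii) and (ii) by choosing $p$ appropriately. For (iii): assuming $F(w),L(w)\neq\alpha$, $\lambda_{(\alpha,w)}\neq 0$, and $w$ not right maximal, pick $\gamma\in\cQ_1$ with $w\gamma\notin\cI$; the distinct arrows $\alpha$ and $L(w)$ both end at $t(\alpha)=s(\gamma)$, so condition~(ii) of Definition~\ref{def:string}, together with $L(w)\gamma\notin\cI$, forces $\alpha\gamma\in\cI$. Reading off the coefficient of $p=w\gamma$ in the identity for $(a,b)=(\alpha,\gamma)$, the first summand contributes nothing because $F(w\gamma)\neq\alpha$, while the second yields $\lambda_{(\alpha,w)}=0$, a contradiction; left maximality follows symmetrically via condition~(iii). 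For (ii): if $F(w)\neq\alpha$ and $\beta w\notin\cI$ for some $\beta$, condition~(iii) of Definition~\ref{def:string} at $s(\alpha)$ applied to the distinct outgoing arrows $\alpha,F(w)$ forces $\beta\alpha\in\cI$; the coefficient of $p=\beta w$ in the identity for $(a,b)=(\beta,\alpha)$ is $\lambda_{(\alpha,w)}$ alone if $L(w)\neq\alpha$ (contradicting the hypothesis) and $\lambda_{(\alpha,w)}+\lambda_{(\beta,\beta w')}$ if $L(w)=\alpha$ with $w=w'\alpha$, yielding the claimed relation. The main subtlety is the bookkeeping of which length-$(n+1)$ basis paths admit contributions from each side of the degree-$(n+1)$ identity; the hypotheses on $F(w)$ and $L(w)$ are precisely what forces at most one nontrivial contribution and thereby isolates $\lambda_{(\alpha,w)}$.
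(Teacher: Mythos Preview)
Your argument is correct and is essentially the paper's own proof: both derive the degree-$(n{+}1)$ relation $a\,\psi(b)^{(n)}+\psi(a)^{(n)}\,b=0$ from $\phi(a)\phi(b)=0$ whenever $ab\in\cI$, then read off the coefficient of a suitable length-$(n{+}1)$ basis path to isolate $\lambda_{(\alpha,w)}$. The only cosmetic differences are that you state the degree-$(n{+}1)$ identity once up front and treat (iii) before (ii), whereas the paper expands $\phi(\beta)\phi(\alpha)$ directly in each case and obtains (iii) as a consequence of (ii) together with its right-handed analogue.
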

\begin{proof}
    Suppose $\phi\in H_A\cap \Aut_0(A)$. 
    
    (i) Since $\phi\in H_A$ fixes stationary paths, $\phi(\alpha)=\epsilon_{s(\alpha)}\phi(\alpha)\epsilon_{t(\alpha)}$ is a linear combination of paths parallel to $\alpha$. Thus $\lambda_{(\alpha, w)}\neq 0$ implies $w$ is parallel to $\alpha$.

 (ii) Suppose that $w\in \cB_{n(\phi)}$ is such that $\lambda_{(\alpha, w)}\neq 0$ but $F(w)\neq \alpha$ and $w$ is not left maximal.
Then, there exists an arrow $\beta$ such that $\beta w\notin \cI$ but $\beta\alpha\in \cI$, so
\[0=\phi(\beta)\phi(\alpha)=\left(\beta+\sum_{\ell(w)\geq n(\phi)}\lambda_{(\beta, w)}w\right)\left(\alpha+\sum_{\ell(w)\geq n(\phi)}\lambda_{(\alpha, w)}w\right).\]
Since $\beta w\notin\cI$, the coefficient of $\beta w$ in this sum must be 0. Since $w\in \cB_{n(\phi)}$, there must exist $w'\in\cB$ such that $w=w'\alpha$ and $\lambda_{(\beta, \beta w')}=-\lambda_{(\alpha,w)}$. In particular, $L(w)=\alpha$.

(iii) Similarly, if $w\in \cB_{n(\phi)}$ is such that $\lambda_{(\alpha, w)}\neq 0$ but $w$ is not right maximal and $L(w)\neq \alpha$, then $F(w)=\alpha$. Consequently, if $F(w)\neq \alpha$ and $L(w)\neq \alpha$, then $w$ must be maximal.
\end{proof}

\begin{proposition}\label{prop:strfixv}
    Suppose that $A$ is a string algebra and $\phi\in H_A\cap \Aut_0(A)$. Then, there exist $y\in A_+$, $\rho\in D(A)$, and a type II derivation $\delta$ such that  $\phi =  \exp(\delta)\circ \rho\circ \Delta_{\one+y}.$
\end{proposition}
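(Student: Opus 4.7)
The plan is to combine Corollary~\ref{cor:fixAbar} with a degree-by-degree induction, at each level cancelling the error by a combination of one type~I, one inner, and one type~II correction.

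I would begin by using Corollary~\ref{cor:fixAbar} to produce a type~II derivation $\delta_1$ such that $\phi_0:=\exp(-\delta_1)\circ\phi$ lies in $H_A\cap\Aut_0(A)$ and satisfies $\phi_0(\alpha)\in\alpha+\langle\alpha\rangle$ for every arrow $\alpha$. Since type~II derivations commute with each other and with type~I by Lemma~\ref{lem:nilpder}, any additional type~II corrections introduced later can be absorbed into $\delta_1$, so it suffices to write such a $\phi_0$ as $\rho\circ\Delta_{\one+y}\circ\exp(\delta_2)$ for some $\rho\in D(A)$, $y\in A_+$, and type~II $\delta_2$.

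I would then argue by induction on $n$ that one can choose $\rho_n\in D(A)$, $y_n\in\bigoplus_{v\in\cQ_0}\epsilon_v A_+\epsilon_v$ (ensuring $\Delta_{\one+y_n}\in H_A$), and a type~II derivation $\delta_{2,n}$ such that the composition $\psi_n:=\rho_n^{-1}\circ\exp(-\delta_1-\delta_{2,n})\circ\phi\circ\Delta_{\one+y_n}^{-1}$ agrees with the identity modulo $A_{>n}$ on every arrow. Since $A$ is finite-dimensional, this terminates with $\psi_n=\id$ once $n$ exceeds the maximum path length, yielding the desired factorisation. In the inductive step $\psi_n\in H_A\cap\Aut_0(A)$ satisfies $\psi_n(\alpha)\in\alpha+\langle\alpha\rangle\cap A_{>n}$; applying Lemma~\ref{lem:sgrfrm} to $\psi_n$ splits the degree-$(n{+}1)$ error $z^{(n+1)}_\alpha$ into four kinds of paths $w$ parallel to $\alpha$: (A) $w\in\alpha A\alpha$; (B) $w=\alpha u$ with $L(w)\neq\alpha$; (C) $w=u\alpha$ with $F(w)\neq\alpha$; (D) $w$ maximal with $F(w),L(w)\neq\alpha$, the maximality being forced by Lemma~\ref{lem:sgrfrm}(iii). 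Case~(A) is absorbed by post-composing $\rho_n$ with an element of $D(A)$ supplied by Lemma~\ref{lem:p(z)type}; case~(D) contributes a new type~II derivation $\delta':=\sum_\alpha\delta_{(\alpha,\,z^{(D)}_\alpha)}$, valid by Lemma~\ref{lem:basicder} (the relevant terms are maximal, parallel to $\alpha$, and have $F,L\neq\alpha$), which is added to $\delta_{2,n}$; cases~(B) and~(C) are absorbed by an inner automorphism $\Delta_{\one+y'}$ whose $y'=\sum_v y'_v\in\bigoplus_v\epsilon_v A_n\epsilon_v$ is multiplied into $\one+y_n$.

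The main obstacle is the simultaneous, vertex-consistent construction of $y'$. For each cycle $u$ at $v$ of length $n$, the coefficient of $u$ in $y'_v$ is constrained both by the (C)-coefficient $\lambda_{(\alpha,u\alpha)}$ for the unique arrow $\alpha$ out of $v$ with $u\alpha\in\cB$ (uniqueness from Definition~\ref{def:string}(iii)) and by the (B)-coefficient $\lambda_{(\beta,\beta u)}$ for each $\beta$ into $v$ with $\beta u\in\cB$. The coupling $\lambda_{(\alpha,u\alpha)}=-\lambda_{(\beta,\beta u)}$ from Lemma~\ref{lem:sgrfrm}(ii), together with its right-symmetric analogue obtained from $\phi(\beta)\phi(\delta)=0$ for a suitable $\delta$ with $\beta\delta\in\cI$, makes the two demands compatible, so setting the coefficient of $u$ in $y'_v$ to be $-\lambda_{(\alpha,u\alpha)}$ works. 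A short calculation then verifies $\alpha y'-y'\alpha$ equals the negative of the (B)+(C)-part of $z^{(n+1)}_\alpha$ modulo $A_{>n+1}$, so composing with $\Delta_{\one+y'}^{-1}$ corrects the current level without perturbing lower ones. Finally, the inductive data combine into single factors because $D(A)$ is a subgroup (Lemma~\ref{lem:p(z)type}), products of vertex-diagonal elements $\one+y_n$ remain vertex-diagonal, and type~II derivations commute, yielding the decomposition $\phi=\exp(\delta)\circ\rho\circ\Delta_{\one+y}$.
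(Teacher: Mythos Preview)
The overall inductive scheme matches the paper's, but your handling of the (B)/(C) terms has a genuine gap: the inner correction you propose is over-constrained. For a cycle $u$ at $v$, you need the coefficient $c$ of $u$ in $y'$ to satisfy both $c=-\lambda_{(\alpha,u\alpha)}$ (to kill the (C)-term for $\alpha$) and $c=\lambda_{(\beta,\beta u)}$ (to kill the (B)-term for $\beta$). Lemma~\ref{lem:sgrfrm}(ii) supplies the identity $\lambda_{(\alpha,u\alpha)}=-\lambda_{(\beta,\beta u)}$ only when $u\alpha$ is \emph{not} left maximal, and the right-symmetric analogue only when $\beta u$ is not right maximal. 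But if $\beta u\alpha\in\cI$ while $\beta u,\,u\alpha\in\cB$ (which can happen in a string algebra with a relation of length $\geq 3$), then $u\alpha$ is left maximal and $\beta u$ is right maximal, so neither coupling applies and the two constraints are genuinely independent.

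A concrete failure: take $\cQ$ with one vertex and loops $a,b$, and $\cI=\langle a^2,b^2,bab\rangle$, so $\cB=\{\epsilon,a,b,ab,ba,aba\}$. The map $\psi$ fixing $\epsilon,a$ and sending $b\mapsto b+ab$ lies in $H_A\cap\Aut_0(A)$. Here $u=a$, $\alpha=\beta=b$, $u\alpha=ab$ is left maximal, $\beta u=ba$ is right maximal, and $\lambda_{(b,ab)}=1$ while $\lambda_{(b,ba)}=0$; no single coefficient of $a$ in $y'$ cancels both, and choosing $c=-1$ to kill $ab$ introduces a new $ba$-term at the same degree, so the induction stalls. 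The paper avoids this by using the inner correction to kill \emph{only} the $F(w)=\alpha$ terms (your (A)+(B)); applying Lemma~\ref{lem:sgrfrm}(ii) to the \emph{resulting} automorphism---where now every $\bar\lambda_{(\beta,\beta w')}$ vanishes---forces each surviving (C)-term to be left maximal, so it is absorbed into the type~II factor rather than the inner one. Your initial appeal to Corollary~\ref{cor:fixAbar} is harmless but becomes unnecessary once the step is reorganised this way.
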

\begin{proof}
Suppose $\phi\in H_A\cap \Aut_0(A)$. Then, there exists $n>1$ such that  $\phi(\alpha)-\alpha \in A_{\geq n}$ for each arrow $\alpha$. Since $A$ is finite-dimensional, there exists $N$ such that $A_{\geq N}=0$. Thus, $1<n\leq N$. We proceed via induction on $N-n$. If $N-n=0$, then $\phi(\alpha)=\alpha$ for every arrow $\alpha$. Since $\phi\in H_A$ also fixes the stationary paths, it must be that $\phi=\id_A$.

Assume that for any $\psi\in \Aut_0(A)$ with $\psi(\alpha)-\alpha \in A_{>n}$ for each arrow $\alpha$, $\psi$ decomposes into an element of $\Inn^*(A)$, and element of $D(A)$, and a type II exponential automorphism. Then
\[\phi(\alpha)=\alpha+\sum_{\ell(w)\geq n}\lambda_{(\alpha, w)}w.\]
for some  $\lambda_{(\alpha,w)}\in\kk$. Define 
\[y=\sum_{w\in \cB_{n-1}}\sum_{\beta\in\cQ_1}\lambda_{(\beta,\beta w)}w.\]
Since $\lambda_{(\beta,\beta w)}\neq 0$ implies $\beta w$ is parallel to $\beta$ (hence $w$ is a cycle at $t(\beta)$),  $y$ must be a linear combination of cycles in $A_{n-1}\subseteq A_+$. Since $A_{\geq N}$ is finite-dimensional, this means that $\one-y\in A^\times$ and $(\one-y)^{-1}=\one+y+y^2+\cdots+y^N$. Thus, $\Delta_{\one-y}(w)\in \Inn^*(A)\subseteq \Aut_0(A)$.
As $y$ is sum of cycles,  $\Delta_{\one-y}\in H_A\cap \Aut_0(A)$. For an arrow $\alpha$,
\[\Delta_{\one-y}(\alpha)\in \alpha+\sum_{w\in \cB_{n-1}}\sum_{\beta\in\cQ_1}\lambda_{(\beta,\beta w)}w\alpha-\sum_{w\in \cB_{n-1}}\lambda_{(\alpha,\alpha w)}\alpha w+A_{>n}\]
and so
$\Delta_{\one-y}(\phi(\alpha))\in \alpha+\sum\limits_{w\in \cB_{n-1}}\sum\limits_{\beta\in\cQ_1}\lambda_{(\beta,\beta w)}w\alpha+\sum\limits_{\substack{w'\in \cB_{n}\\ F(w')\neq \alpha}}\lambda_{(\alpha, w')} w'+A_{>n}.$ Let
\[x_\alpha\defeq \sum_{\substack{w\in \cB_{n(\phi)-1}\\F(w)=\alpha}}\sum_{\beta\in\cQ_1}\lambda_{(\beta,\beta w)}w\alpha\in \alpha A \alpha.\]
Then $\sum_{\alpha\in\cQ_1}\delta_{(\alpha,x_\alpha)}$ is of type I and so $\rho:=\exp\left(\sum_{\alpha\in\cQ_1}\delta_{(\alpha,x_\alpha)}\right)\in D(A)$. Hence
\[(\rho\circ \Delta_{\one-y})(\phi(\alpha))\in \alpha+\sum_{\substack{w\in \cB_{n-1}\\F(w)\neq \alpha}}\sum_{\beta\in\cQ_1}\lambda_{(\beta,\beta w)}w\alpha+\sum_{\substack{w'\in \cB_{n}\\ F(w')\neq \alpha}}\lambda_{(\alpha, w')} w'+A_{>n}\]
for each arrow $\alpha$. That is, 
\[(\rho\circ \Delta_{\one-y}\circ \phi)(\alpha)\in \alpha+\sum_{\substack{w\in \cB_{\geq n}}}\bar{\lambda}_{(\alpha, w)} w\]
for  $\bar{\lambda}_{(\alpha, w)}\in \kk$ such that $\bar{\lambda}_{(\alpha, w)}=0$ for $w\in \cB_n$ with $F(w)=\alpha$. Since $\rho\in D(A)$ and $\Delta_{\one-y}\in H_A\cap \Aut_0(A)$, it follows that $\rho\circ \Delta_{\one-y}\circ \phi\in H_A\cap \Aut_0(A)$. Suppose that there exists an arrow $\alpha$ and path $w\in \cB_{n}$ such that $\bar{\lambda}_{(\alpha, w)}\neq 0$, but $w$ is not left maximal. By Lemma \ref{lem:sgrfrm}, there must exist $w'\in \cB_{n-1}$ and $\beta\in \cQ_1$ such that $w=w'\alpha$ and $\bar{\lambda}_{(\alpha, w)}=-\bar{\lambda}_{(\beta, \beta w')}$. However, $\bar{\lambda}_{(\beta, \beta w')}=0$ for every arrow $\beta$. Thus, every $w\in \cB_{n}$ with $\bar{\lambda}_{(\alpha, w)}\neq 0$ must be left maximal. Lemma \ref{lem:sgrfrm} also tells us that $w$ is parallel to $\alpha$ and right maximal if $L(w)\neq \alpha$. Thus, $\bar{y}_\alpha\defeq \sum_{w\in \cB_{n}}\bar{\lambda}_{(\alpha, w)}w$ is a linear combination of left maximal paths parallel to $\alpha$, which are either right maximal or end with $\alpha$, so 
\[\delta:=\sum_{\alpha\in \cQ_1}\delta_{\alpha, -\bar{y}_\alpha}\]
is a type II derivation and  $\exp(\delta)\in H_A\cap \Aut_0(A)$. Consider 
\begin{equation}\hat{\phi}:=\exp(\delta)\circ \rho\circ \Delta_{\one-y}\circ \phi\in H_A\cap \Aut_0(A) .\end{equation}
By choice of $\bar{y}_\alpha$,
we have
$\hat{\phi}(\alpha)\in \alpha+ A_{>n}$ for each arrow $\alpha$.
Hence there exists $y'\in A_+$, $\rho'\in D(A)$, and a type II derivation $\delta'$ such that  
$\hat{\phi} =  \exp(\delta')\circ \rho'\circ \Delta_{\one+y'}.$
Thus 
\[\phi =\left(\exp(\delta)\circ \rho\circ \Delta_{\one-y}\right)^{-1}\circ \hat{\phi}=\left(\Delta_{(\one-y)^{-1}}\circ \rho^{-1}\circ \exp(-\delta)\right)\circ \left( \exp(\delta')\circ \rho'\circ \Delta_{\one+y'}\right).\]
By Lemma \ref{lem:nilpder}, $\delta$ and $\delta'$  commute and hence $\exp(-\delta)\circ \exp(\delta')=\exp(\delta'-\delta)$. Since $\rho\in D(A)$, we know that $\rho^{-1}$ is also a composition of type I exponential automorphisms and hence commutes with $\exp(\delta'-\delta)$ by Lemma \ref{lem:nilpder}, so
\[\phi =\Delta_{(\one-y)^{-1}}\circ \exp(\delta'-\delta)\circ (\rho^{-1}\circ \rho')\circ \Delta_{\one-y'},\]
where $\rho' \circ \rho^{-1}\in D(A)$ and $\delta'-\delta$ is of type II. As $\Inn^*(A)$ is a normal subgroup of $\Aut(A)$, the desired result follows.
\end{proof}
As $\Aut(A)=\Hhat{A}\cdot \Inn^*(A)$ \cite{GS99, Po89}, we may describe $\Aut_0(A)$ as follows.
\begin{theorem}\label{thm:strA0}
    Let $A$ be a string algebra. Then, up to an inner automorphism, any $\phi\in \Aut_0(A)$ may be decomposed into exponential automorphisms of type I and II.
\end{theorem}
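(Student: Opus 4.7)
The plan is to reduce the theorem to Proposition \ref{prop:strfixv} by first absorbing the vertex-permuting part of $\phi$ into an inner automorphism. Concretely, I aim to write every $\phi\in \Aut_0(A)$ as $\phi=\psi\circ \Delta_u$ with $\psi\in H_A\cap \Aut_0(A)$ and $\Delta_u\in \Inn^*(A)$; the result will then follow by invoking Proposition \ref{prop:strfixv} and combining the two inner factors.

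For the reduction step, I would invoke the classical decomposition $\Aut(A)=\Hhat{A}\cdot \Inn^*(A)$ from \cite{GS99, Po89} to write $\phi=\psi\circ \Delta_u$ with $\psi\in \Hhat{A}$ and $u\in \one+A_+$. Since any $\Delta_u$ with $u\in \one+A_+$ has identity associated graded map (an immediate computation, using $u^{-1}zu\in z+A_{>n}$ for $z\in A_n$), we have $\Inn^*(A)\subseteq \Aut_0(A)$, and as $\Aut_0(A)$ is a subgroup, $\psi=\phi\circ \Delta_u^{-1}$ also lies in $\Aut_0(A)$. But $\psi\in \Hhat{A}$ sends each $\epsilon_v$ into $A_0$, so the identity $\gr\psi=\id$ forces $\psi(\epsilon_v)=\epsilon_v$ for every $v\in \cQ_0$; hence $\psi\in H_A\cap \Aut_0(A)$, as required.

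Applying Proposition \ref{prop:strfixv} to $\psi$ then yields a derivation $\delta$ of type II, an element $\rho\in D(A)$, and $y\in A_+$ with $\psi=\exp(\delta)\circ \rho\circ \Delta_{\one+y}$. Using the composition rule $\Delta_{\one+y}\circ \Delta_u=\Delta_{u(\one+y)}$ then gives $\phi=\exp(\delta)\circ \rho\circ \Delta_{u(\one+y)}$. The factor $\Delta_{u(\one+y)}$ is inner because $u(\one+y)\in \one+A_+$, and by Lemma \ref{lem:p(z)type}, $\rho$ is itself a composition of exponential automorphisms of type I, which completes the decomposition. No step presents a real obstacle: the theorem is essentially a bookkeeping corollary of the already-proved Proposition \ref{prop:strfixv} together with the classical decomposition $\Aut(A)=\Hhat{A}\cdot \Inn^*(A)$.
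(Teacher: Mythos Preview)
Your proposal is correct and follows essentially the same approach as the paper: decompose $\phi\in\Aut_0(A)$ via $\Aut(A)=\Hhat{A}\cdot\Inn^*(A)$, observe that the $\Hhat{A}$-factor actually lies in $H_A\cap\Aut_0(A)$, apply Proposition~\ref{prop:strfixv}, and combine the two inner factors. Your write-up is slightly more explicit than the paper's in justifying $\Hhat{A}\cap\Aut_0(A)=H_A\cap\Aut_0(A)$, but the argument is the same.
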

\begin{proof}
    Suppose that $\phi\in \Aut_0(A)$. Then,  $\phi=\hat{\phi}\circ \Delta_{\one+y}$ for some $y\in A_+$ and $\hat{\phi}\in \Hhat{A}$. Since $y\in A_+$, we have $\Delta_{\one+y}\in \Aut_0(A)$ and hence $\hat{\phi}\in \Aut_0(A)$. As $\Aut_0(A)\cap \Hhat{A}=\Aut_0(A)\cap H_A$, Proposition \ref{prop:strfixv} tells us that $\hat{\phi}=\exp(\delta)\circ \rho\circ \Delta_{\one+y'}$ where $\rho\in D(A)$ and $\delta$ is a type II derivation. Hence
    \[\phi=\exp(\delta)\circ \rho\circ \Delta_{(\one+y)(\one+y')},\]
    is the desired decomposition. 
\end{proof}

\section{Automorphisms of a locally gentle with unique maximal path}\label{sec:ump}
In this section, we consider automorphisms of locally gentle algebras with a unique maximal path by embedding them in $M_n(\kk[x])$. In Proposition \ref{prop:1mcase}, we conclude that the automorphisms of such algebras decompose into a graded automorphism and an inner automorphism. For the remainder of this section, $A=\kk\cQ/\cI$ is a locally gentle algebra with a unique, infinite maximal path, $\gamma$, and $\alpha_1\alpha_2\cdots \alpha_n$ is a fixed (primitive) generating cycle of $\gamma$ (i.e., $\gamma=(\alpha_1\alpha_2\cdots \alpha_n)^\infty$).

\begin{lemma}\label{lem:mbij}
    Consider the following subalgebra of $M_n(\kk[x])$,
    \begin{equation}
        M(A)\defeq \setst{P\in M_n(\kk[x])}{\substack{[P(0)]_{ii}=[P(0)]_{jj}\text{ if }s(\alpha_i)=s(\alpha_j),\\ [P(0)]_{ij}=0\text{ if }i>j}}.
    \end{equation}
    Then there exists a $\kk$-algebra automorphism $\Psi:A\rightarrow M(A)$.
\end{lemma}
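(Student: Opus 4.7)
The plan is to write $\Psi$ down explicitly on generators, verify it extends to a well-defined algebra homomorphism by checking the relations, and then show it is bijective onto $M(A)$ by matching a natural basis on each side.

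Let $E_{ij}\in M_n(\kk[x])$ denote the matrix with $1$ in position $(i,j)$ and $0$ elsewhere, and set $I_v\defeq\{i:s(\alpha_i)=v\}$ for each $v\in\cQ_0$. I would define $\Psi$ on generators by
\[\Psi(\epsilon_v)\defeq\sum_{i\in I_v}E_{ii},\qquad \Psi(\alpha_i)\defeq E_{i,i+1}\text{ for }1\le i<n,\qquad \Psi(\alpha_n)\defeq x\,E_{n,1}.\]
To show this extends to a map from $A$, it suffices to verify the relations of $\kk\cQ$ and the generators of $\cI$. Since the sets $I_v$ partition $\{1,\ldots,n\}$ and $t(\alpha_i)=s(\alpha_{i+1\bmod n})$ along the cycle, the relations $\epsilon_v\epsilon_w=\delta_{vw}\epsilon_v$ and the source/target relations for arrows are immediate. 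The relations generating $\cI$ are the length two paths $\alpha_i\alpha_j$ composable in $\cQ$ with $j\not\equiv i+1\pmod n$; a direct calculation gives $\Psi(\alpha_i)\Psi(\alpha_j)=0$ in each such case.

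Next I would compute $\Psi(w)$ for every basis element $w\in\cB$. A nonstationary path $w\in\cB$ is a subpath of $\gamma^\infty$, so it has the form $w=\alpha_i\alpha_{i+1}\cdots\alpha_{i+\ell-1}$ with indices read cyclically modulo $n$. Letting $k$ denote the number of times $\alpha_n$ occurs in $w$, an easy induction on $\ell$ yields $\Psi(w)=x^k E_{i,j}$, where $j\in\{1,\ldots,n\}$ is determined by $j-1\equiv i+\ell-1\pmod n$. Whenever the path wraps around $\gamma$ (in particular whenever $i\ge j$) one has $k\ge 1$, so $\Psi(w)$ vanishes at $x=0$; otherwise $i<j$ and $\Psi(w)=E_{i,j}$ is strictly upper triangular. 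Together with the diagonal form of the $\Psi(\epsilon_v)$, which is constant on $I_v$ by construction, this shows $\Psi(A)\subseteq M(A)$.

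Bijectivity then follows by bookkeeping. For injectivity, the triple $(k,i,j)$ determines $\ell$ and hence $w$ uniquely, so the map $w\mapsto\Psi(w)$ sends the nonstationary portion of $\cB$ to distinct monomials $x^k E_{i,j}$; the $\Psi(\epsilon_v)$ are supported on disjoint diagonal positions and carry no $x$, so the full image of $\cB$ is $\kk$-linearly independent in $M(A)$. For surjectivity, decompose $P\in M(A)$ as $P(0)+xQ(x)$: the strictly upper triangular part of $P(0)$ is a sum of $E_{i,j}$ with $i<j$, each the image of a non-wrapping subpath of $\gamma$; the diagonal part of $P(0)$ is a combination of $\Psi(\epsilon_v)$'s by the vertex condition defining $M(A)$; and every $x^k E_{i,j}$ with $k\ge 1$ is the image of a subpath of $\gamma^\infty$ wrapping $k$ times with the appropriate starting and ending arrows. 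The main obstacle is just the cyclic index bookkeeping that converts $(\ell,i)$ to $(k,i,j)$ and back; once this correspondence is made precise, both halves of the bijection fall out of the explicit description of $\Psi$.
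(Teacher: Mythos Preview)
Your proof is correct and arrives at exactly the same isomorphism $\Psi$ the paper produces; the verification via the bijection $(i,\ell)\leftrightarrow(k,i,j)$ (with $\ell=kn+j-i$) is precisely the basis bookkeeping that makes the map work.

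The route differs slightly from the paper's. Rather than defining $\Psi$ directly on generators, the paper first builds a \emph{graded} embedding $\psi:A\hookrightarrow M_n(\kk[x])$ with $\psi(w)=x^{\ell(w)}E_{i,j+1}$ (so the power of $x$ records path length), identifies the image, then conjugates by the diagonal matrix $D=\mathrm{diag}(1,x,\dots,x^{n-1})$ and substitutes $x^n\mapsto x$ to land in $M(A)$. Your approach is more economical: you write down the final $\Psi$ immediately and check relations and bijectivity by hand. The paper's detour has the minor advantage that multiplicativity of $\psi$ is checked on pairs of basis paths rather than on generators and relations, and the graded intermediate form makes the origin of the $x$-weight (counting wraps past $\alpha_n$) transparent; your direct argument trades that conceptual clarity for brevity.
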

\begin{proof}
    We will give an embedding of $A$ into $M_n(\kk[x])$ and show that its image is isomorphic to $M(A)$. Let $E_{ij}$ denote the matrix with $(i,j)$th entry $\delta_{ij}$ (the standard $\kk[x]$-basis elements for $M_n(\kk[x])$). 
    Considering subscripts modulo $n$, let
    \begin{equation}
        \psi(w)\defeq \begin{cases}
            \bigoplus_{\setst{i}{s(\alpha_i)=v}} E_{ii} & \text{ if } w=\epsilon_v,\\
            x^{\ell(w)}E_{i,{j+1}} &\text{ if $F(w)=\alpha_i$ and $L(w)= \alpha_j$.}
        \end{cases}
    \end{equation}
    As $\cB$ is a $\kk$-basis for $A$, this defines a $\kk$-linear map $\psi:A\rightarrow  M_n(\kk[x])$. In particular, \[\psi(\one_A)=\sum_{v\in\cQ_0}\psi(\epsilon_v)=\bigoplus_{\setst{i}{\alpha_i\in\cQ_1}}E_{ii}=I_n.\]
    To see that $\psi$ is a homomorphism of $\kk$-algebras, it suffices to check that $\psi$ respects multiplication on $\cB$. Suppose $w\in \cB_+$ with $L(w)=\alpha_i$. Let $w'\in \cB_+$. If $F(w')\neq \alpha_{i+1}$, then $ww'=0$ and $\psi(w)\psi(w')=0$. Otherwise, we have $ww'\in\cB$,  $\ell(ww')=\ell(w)+\ell(w')$, $F(ww')=F(w)$, and $L(ww')=L(w')$. Thus $\psi(ww')=\psi(w)\psi(w')$ in either case.
    As $s(w)=s(\alpha_i)$, it follows that $s(w)=v$ if and only if $\psi(\epsilon_v)\psi(w)=\psi(w)$. 
    As $\alpha_1\cdots \alpha_n$ is a cycle,  $L(w)=\alpha_j$ implies $t(w)=s(\alpha_{j+1})$, so $t(w)=v$ if and only if $\psi(w)\psi(\epsilon_v)=\psi(w)$. Lastly, it is clear that $\psi(\epsilon_i)\psi(\epsilon_j)=\delta_{ij}\psi(\epsilon_i)$.
    Thus $\psi$ is a homomorphism of $\kk$-algebras.
    
   As $A$ is locally gentle and $\gamma$ is infinite, there is exactly one path $p\in \cB$ such that $F(p)=\alpha_i$ and $\ell(p)=k$ for each $1\leq i\leq n$ and $k\geq 1$.  Since $\setst{x^kE_{ij}}{k\geq 0, 1\leq i,j\leq n}$ is a $\kk$-basis for $M_n(\kk[x])$, it follows that $\psi$ is injective. Additionally, the last arrow of $p$ must be $\alpha_{i+k-1}$, so $\psi(p)=x^kE_{i,k+i}$ (again, with indices taken modulo $n$). 
Thus $\psi(A)$ is spanned over $\kk[x^n]$ by
\begin{equation}
    \set{\bigoplus_{\setst{i}{s(\alpha_i)=v}} E_{ii}}_{v\in\cQ_0}\cup \set{x^{j-i}E_{ij}}_{1\leq i<j\leq n}\cup \set{x^{n+j-i}E_{ij}}_{1\leq j\leq i\leq n}.
\end{equation}
That is, $P\in \psi(A)$ if and only if $P_{ij}=x^{j-i}p_{ij}(x^n)$ for some $p_{ij}\in\kk[x]$ such that
(1) $p_{ii}(0)=p_{jj}(0)$ if $s(\alpha_i)=s(\alpha_j)$, (2) $p_{ij}(0)=0$ if $j<i$. Consider the diagonal matrix
\begin{equation}
    D\defeq\left(\begin{matrix}
        1 & 0 & \cdots & 0 & 0\\
        0 & x & \cdots & 0 & 0\\
        \vdots & \vdots & \ddots & \vdots & \vdots\\
        0 & 0  & \cdots & x^{n-2} & 0\\
        0 & 0  & \cdots & 0 & x^{n-1}\\
    \end{matrix}\right).
\end{equation}
Then $DE_{ij}D^{-1}=x^{i-j}E_{ij}$ and so $P\in D\psi(A)D^{-1}$ if and only if $P_{ij}=p_{ij}(x^n)$ for some $p_{ij}\in\kk[x]$ such that
 $p_{ii}(0)=p_{jj}(0)$ if $s(\alpha_i)=s(\alpha_j)$ and $p_{ij}(0)=0$ if $j<i$. It follows that $D\psi(A) D^{-1}$ (and hence $A$) is isomorphic to $M(A)$ (sending $x^nE_{ij}$ to $xE_{ij}$).
\end{proof}

The automorphism $\Psi: A\to M(A)$ this yields is given by $\Psi(\epsilon_v)= \bigoplus_{\setst{i}{s(\alpha_i)=v}} E_{ii}$ and
    \begin{equation}
        \Psi(\alpha_i)=\begin{cases}E_{i(i+1)} & \text{ if $1\leq i<n$},\\
        xE_{n1} & \text{ if $i=n$}.\\
        \end{cases}
    \end{equation}

We use $S_n$ to denote the symmetric group on $[n]=\setst{i}{1\leq i\leq n}$. For $\sigma\in S_n$, $M_\sigma\in M_n(\kk)$ denotes the corresponding permutation matrix. That is, $(M_\sigma)_{ij}=\delta_{\sigma(i)j}$. We let $\tau$ denote the $n$-cycle $(1\; 2\; \cdots \; n)$. It was shown in \cite{GLR} that for any matrix $M\in M_n(\kk[x])$, there exist unimodular matrices $U,V$ and a diagonal matrix $D$ such that $M=UDV$. In the following lemma, we show that, up to a permutation matrix, we may assume that $U(0), V(0)$ are upper triangular with ones on the diagonal (i.e., they correspond to an element of $\one+A_+$).
\begin{lemma}[Modified Smith form]\label{lem:decomp}
    For $n\in\NN$, consider the set
    \begin{equation}
        B_n\defeq \setst{P\in M_n^\times (\kk[x])}{[P(0)]_{ij}=\begin{cases} 1 & \text{ if }i=j,\\
        0&\text{ if }i>j.\end{cases}}.
    \end{equation}
    
   For any $M\in M_n(\kk[x])$, there exist $U,V\in B_n$, $\sigma\in S_n$, and a diagonal matrix $D\in M_n(\kk[x])$ such that $M=UDM_\sigma V$.
\end{lemma}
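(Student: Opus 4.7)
The plan is to prove the lemma by induction on $n$. The base case $n=1$ is immediate: take $U=V=(1)$, $\sigma=e$, and $D=M$.

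For the inductive step, the key observation is an asymmetry in which elementary operations belong to $B_n$. An elementary row operation $R_k\to R_k+fR_j$ lies in $B_n$ whenever $k<j$ (adding a later row to an earlier row), with no restriction on $f$; when $k>j$, it requires $f(0)=0$. Dually, a column operation $C_l\to C_l+fC_i$ lies in $B_n$ whenever $i<l$, with no restriction on $f$, and requires $f(0)=0$ when $i>l$. Consequently, the corner position $(n,1)$ is the natural pivot: clearing other entries in column $1$ uses operations $R_k\to R_k-qR_n$ with $k<n$ (always allowed), and clearing other entries in row $n$ uses operations $C_l\to C_l-qC_1$ with $l>1$ (also always allowed).

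The plan for the inductive step is then as follows. First, using the freely available operations described above, perform Euclidean reductions so that $M_{n,1}$ divides every other entry of row $n$ and column $1$. Next, clear all the other entries in row $n$ and column $1$, leaving only the pivot at $(n,1)$. The $(n-1)\times(n-1)$ submatrix $M'$ obtained by deleting row $n$ and column $1$ can then be reduced via the inductive hypothesis to $M'=U'D'M_{\sigma'}V'$ with $U',V'\in B_{n-1}$. Combining the two decompositions, with $\sigma\in S_n$ defined by $\sigma(n)=1$ and $\sigma(k)=\sigma'(k)+1$ for $k<n$, yields the desired decomposition of $M$.

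The main obstacle is the Euclidean reduction step: modifying $M_{n,1}$ itself is restricted to adding $x$-multiples of entries from other rows (via $R_n\to R_n+xgR_k$) or other columns (via $C_1\to C_1+xgC_l$), so the constant term $M_{n,1}(0)$ is invariant under these allowed operations. To handle this, I would first reduce the constant term $M(0)\in M_n(\kk)$ by the action of constant upper unitriangular matrices on both sides, a classical Bruhat-type orbit analysis for $M_n(\kk)$ whose orbits are represented by partial permutation matrices. After this preliminary reduction, $M(0)$ has a normal form compatible with $DM_\sigma$, and the remaining reduction (on the $x$-divisible part $M-M(0)$) terminates using the allowed $x$-multiple operations on $R_n$ and $C_1$. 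Additional care is needed to handle edge cases where row $n$ or column $1$ becomes zero, or where the pivot must be chosen at a position other than exactly $(n,1)$; in those cases one selects an alternative corner-like pivot and proceeds analogously.
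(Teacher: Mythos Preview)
Your inductive framework—pick a pivot, clear its row and column using elementary operations lying in $B_n$, then recurse on the complementary $(n-1)\times(n-1)$ block—matches the paper's, and your observation about which transvections belong to $B_n$ is the right one. The divergence is in pivot selection, and this is exactly where the paper's argument is cleaner than your sketch.

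You commit in advance to the pivot position $(n,1)$ and then try to force the situation via a preliminary Bruhat-type reduction of $M(0)$ plus unspecified edge-case handling. Several steps are not actually carried out: the $U\times U$ orbit representatives on $M_n(\kk)$ are monomial matrices $D_0 M_{\sigma_0}$ (not literal $0$--$1$ partial permutation matrices), and you do not prove even this; when $\sigma_0(n)\neq 1$ the $(n,1)$ entry of $M(0)$ vanishes and, since $M_{n,1}(0)$ is invariant under your allowed moves, your Euclidean reduction cannot start at $(n,1)$, so you must pivot elsewhere, but ``an alternative corner-like pivot'' is never specified; and the termination of the restricted Euclidean algorithm (where one side can only be altered by $x$-multiples of the other) is asserted rather than argued.

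The paper bypasses all of this with a single idea: do not fix the pivot, but choose it at an entry $M_{i_0 j_0}$ of minimum degree $d_0$, breaking ties by taking $(n-i_0,j_0)$ minimal in lexicographic order (so maximize the row index, then minimize the column index). This tiebreak guarantees that every entry with $k>i_0$ or $l<j_0$ already has degree $>d_0$, and one checks that the multiplier $u_k$ (resp.\ $v_l$) achieving $\deg(M_{kj_0}-u_kM_{i_0j_0})\le d_0$ can be chosen in $x\kk[x]$ precisely in those cases, because its constant term does not enter the relevant equations. Hence the elementary matrices $U_0,V_0$ lie in $B_n$ automatically. Iterating strictly decreases $(d_0,n-i_0,j_0)$ lexicographically until the pivot's row and column are otherwise zero, and then one recurses. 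No Bruhat preprocessing, no constant-term obstacle, no case analysis. If you want to rescue your write-up, replacing the fixed $(n,1)$ pivot by this flexible rule is the missing ingredient.
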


\begin{proof}
Note that $B_n$ is a multiplicative subgroup of $M_n^\times (\kk[x])$, as $U\in B_n$ if and only if $U(0)-I_n$ is strictly upper triangular. Thus, it suffices to show that for any $M\in M_n(\kk[x])$, there exist $U,V\in B_n$ such $UMV$ is product of a diagonal matrix and a permutation matrix, which we show via induction on $n$.
    Let $M\in M_n(\kk[x])$ and let $d_0\defeq \min\setst{\deg M_{ij}}{M_{ij}\neq 0}$, the minimum degree of the nonzero entries of $M$. Choose $(n-i_0,j_0)$ minimal with respect to lexicographic order such that the $(i_0,j_0)$th entry of $M$ has degree $d_0$. That is, if $(n-i_0,j_0)>(n-k,l)$, either $M_{kl}=0$ or 
    $\deg M_{kl}>d_0$. Then, there exist $u_k, v_l\in \kk[x]$ such that, for $k,l\in [n]$, $\deg (M_{kj_0}-u_kM_{i_0j_0})\leq d$ and $\deg (M_{i_0l}-v_lM_{i_0j_0})\leq d_0$ and, moreover, \begin{enumerate}
        \item[(1)] the inequality is strict if $k<i_0$ or $l>j_0$ and
        \item[(2)] $u_k,v_l\in x\kk[x]$ if $k>i_0$ or $l<j_0$.
    \end{enumerate}  Condition (1) implies that either $M_{kj_0}$ is zero or $\deg (M_{kj_0}-u_kM_{i_0j_0})<\deg (M_{kj_0})$ for any $k\in [n]$ and likewise for $M_{i_0l}$. 
    Consider matrices $U_0, V_0\in M_n(\kk[x])$ given by
        \begin{equation*}
            [U_0]_{kl}=\begin{cases}
                1 & \text{ if }k=l,\\
                -u_k & \text{ if }k\neq i_0=l\\
                0 & \text{ otherwise.}
            \end{cases}\qquad
            [V_0]_{kl}=\begin{cases}
                1 & \text{ if }k=l,\\
                -v_l & \text{ if }k=j_0\neq l,\\
                0 & \text{ otherwise.}
            \end{cases}
        \end{equation*}
    Note that $(U_0-I_n)^2=0$, as $U_0-I_n$ has its only nonzero entries in its $i_0$th column and has all zeros on the diagonal,  and thus $U_0\in M_n^\times(\kk[x])$. Similarly, $V_0-I_n$ has its only nonzero entries in its $j_0$th column and $V_0\in M_n^\times(\kk[x])$. 
    Thus condition (2) implies $U_0,V_0\in B_n$.  
    Let $M_1\defeq U_0MV_0$. Then,
        \begin{equation*}
            [M_1]_{kl} = \begin{cases} M_{kj_0}-u_kM_{i_0j_0} & \text{ if } l=j_0, k\neq i_0;\\
        M_{i_0l}-v_lM_{i_0j_0} & \text{ if }k=i_0, l\neq j_0;\\
        M_{i_0j_0} & \text{ if $k=i_0$ and $l=j_0$}.\\
            \end{cases}
        \end{equation*} 
    Let $d_1\defeq \min\setst{\deg [M_1]_{kl}}{[M_1]_{kl}\neq 0}$ and choose $(i_1,j_1)$ in the same way as $(i_0,j_0)$ was chosen (replacing $d_0$ with $d_1$). By choice of $u_k$ and $v_k$, we observe that  
    \begin{itemize}
        \item $(d_1, n-i_1, j_1)\leq (d_0, n-i_0, j_0)$
        \item If $(d_1, n-i_1, j_1)= (d_0, n-i_0, j_0)$, then $[M_1]_{i_0j}=[M_1]_{ij_0}=0$ for all $i\neq i_0$, $j\neq j_0$. 
    \end{itemize}
    So repeating this process will eventually  result  in a matrix in $M_n(\kk[x])$ which has an entry such that all other entries in that row and column are zero. Since $B_n$ is closed under multiplication, we see that there exists $U',V'\in B_n$ and $(\bar{\imath},\bar{\jmath})$ such that $(U'MV')_{\bar{\imath}\bar{\jmath}}$ is the only nonzero entry in its row and column. Let $\bar{M}$ be the matrix which results from deleting the $\bar\imath$th row and $\bar\jmath$th column of $U'MV'$. Then there exist $\bar{U},\bar{V}\in B_{n-1}$, a diagonal $\bar{D}\in M_{n-1}(\kk[x])$, and $\sigma\in S_{n+1}$ such that $\bar{U}\bar{M}\bar{V} = \bar{D}M_\sigma $.
    Let $\hat{U}\in M_n(\kk[x])$ be the matrix which results from inserting 1 into $\bar{U}$ as the $\bar\imath$th diagonal entry. That is, there exist blocks $B_{11}\in M_{\bar\imath-1}(\kk[x])$, $B_{12}\in M_{\bar\imath-1, n-\bar\imath}(\kk[x])$, $B_{21}\in M_{n-\bar\imath, \bar\imath-1}(\kk[x])$ and $B_{22}\in M_{n-\bar\imath, n-\bar\imath}(\kk[x])$ such that
    \begin{equation*}
        \bar{U}=\left(\begin{matrix}
            B_{11} & B_{12}\\
            B_{21} & B_{22}
        \end{matrix}\right) \qquad \text{ and }\hat{U}=\left(\begin{matrix}
            B_{11} & 0 &  B_{12}\\
            0 & 1 & 0\\
            B_{21} & 0 & B_{22}
        \end{matrix}\right).
    \end{equation*}
   Then, $\hat{U}U'MV' \hat{V}$ is the matrix obtained by inserting $(U'MV')_{\bar\imath\bar\jmath}$ into $DM_\sigma$ as the $\bar\imath\bar\jmath$-th entry (and zeroes elsewhere in the $\bar\imath$th row and $\bar\jmath$th column). That is, there exist blocks $C_{11}\in M_{\bar\imath-1,\bar\jmath-1}(\kk[x])$, $C_{12}\in M_{\bar\imath-1,n-\bar\jmath}(\kk[x])$, $C_{21}\in M_{n-\bar\imath,\bar\jmath-1}(\kk[x])$, and $C_{22}\in M_{n-\bar\imath,n-\bar\jmath}(\kk[x])$ such that 
   \begin{equation*}DM_\sigma=\left(\begin{matrix}
            C_{11} & C_{12}\\
            C_{21} & C_{22}
        \end{matrix}\right) \qquad \text{ and }\hat{U}U'MV' \hat{V}=\left(\begin{matrix}
            C_{11} & 0 &  C_{12}\\
            0 & (U'MV')_{\bar\imath\bar\jmath} & 0\\
            C_{21} & 0 & C_{22}
        \end{matrix}\right).
    \end{equation*}
   Since a matrix $P\in M_n(\kk[x])$ can be decomposed into a diagonal matrix and a permutation matrix exactly when it has at most one nonzero entry in each row and column, we see that there exist $\hat{D}$ and $\hat{\sigma}$ so that 
    $\hat{U}U'MV'\hat{V}=\hat{D}M_{\hat{\sigma}}$.
    Thus, since $\hat{U}U,V'\hat{V}\in B_n$ (as it is closed under multiplication), we have the desired result.
\end{proof}

\begin{example}
    To illustrate how the algorithm described above works, consider the matrix
    \begin{equation*}
        M=\left(\begin{matrix}
            6x^3-4x^2 & -3x+2 & 9x^2-4\\
            2x^2-1 & -1 & 3x+2\\
            2x^3 & -x+1 & 3x^2+2x
        \end{matrix}\right)
    \end{equation*}
    Then $(i_0,j_0)=(2,2)$ and
        \begin{equation*}
        U_0=\left(\begin{matrix}
            1 & -3x+2& 0\\
            0& 1 & 0\\
            0 & -x & 1
        \end{matrix}\right),\qquad V_0=\left(\begin{matrix}
            1 & 0& 0\\
            2x^2& 1 & 3x+2\\
            0 & 0 & 1
        \end{matrix}\right),
    \end{equation*}
so 
\begin{equation*}
        M_1\defeq U_0MV_0=\left(\begin{matrix}
            3x-2 & 0 & 0\\
            -1 & -1 & 0\\
            2x^2+x & 1 &  3x+2
        \end{matrix}\right).
    \end{equation*}
    Then $(i_1,j_1) = (3,2)$ and
     \begin{equation*}
        U_1=\left(\begin{matrix}
            1 & 0& 0\\
            0& 1 & 1\\
            0 & 0 & 1
        \end{matrix}\right),\qquad V_1=\left(\begin{matrix}
            1 & 0& 0\\
            -2x^2-x & 1 & -3x-2\\
            0 & 0 & 1
        \end{matrix}\right)
    \end{equation*}
        so 
\begin{align*} 
M_2\defeq U_1M_1V_1&=
\left(\begin{matrix}
            3x-2 & 0 & 0\\
            2x^2+x-1 & 0 & 3x+2\\
            0 & 1 & 0
            \end{matrix}\right).
\end{align*}
Thus $(\bar\imath,\bar\jmath)=(3,2)$, $U'=U_1U_0$, $V'=V_1V_0$, and 
\[\bar{M}= \left(\begin{matrix}
            3x-2 & 0 \\
            2x^2+x-1 & 3x+2\\ \end{matrix}\right).\]
Then,
    \begin{equation*}\left(\begin{matrix} 2x  & -2\\- x & 1\end{matrix}\right)
    \bar{M}
    \left(\begin{matrix} 1 &-2\\ - x & 2x+1\end{matrix}\right) 
            = \left(\begin{matrix} 0 & -1\\-1 & 0\end{matrix}\right),\end{equation*}
           so
    \begin{equation*}\left(\begin{matrix} 2x  & -2& 0\\- x & 1 & 0\\
    0 & 0 & 1\end{matrix}\right)
    U'MV'
    \left(\begin{matrix} 1 & 0 & -2\\ 0 & 1 & 0\\- x & 0& 2x+1\end{matrix}\right) 
    = \left(\begin{matrix} 0 & 0 & -1 \\-1 & 0 & 0\\ 0 & 1 & 0\end{matrix}\right).\end{equation*}
\end{example}

\begin{proposition}\label{prop:1mcase}
    Suppose that $A=\kk\cQ/\cI$ is a locally gentle algebra with a unique maximal path, $\gamma$, which is infinite. Then either $A\cong \kk[x]$ or $\Aut_0(A)\subseteq \Inn^*(A)$. Consequently, if $A\not\cong \kk[x]$, then any automorphism of $A$ decomposes into an inner automorphism and a graded automorphism.
\end{proposition}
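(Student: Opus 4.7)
The plan is to use the embedding $\Psi : A \to M(A) \subseteq M_n(\kk[x])$ from Lemma \ref{lem:mbij} together with the modified Smith form from Lemma \ref{lem:decomp} to realise each $\phi \in \Aut_0(A)$ as conjugation by a specific matrix in the subgroup $B_n$. The key identification is that $B_n$ is exactly (under $\Psi$) the group of units $\one + y$ with $y \in A_+$: a matrix $U \in M_n(\kk[x])$ lies in $B_n$ iff $U(0)$ is upper unipotent and $\det U = 1$, which are precisely the conditions for $U \in M(A)^\times$ with $U - I_n \in \Psi(A_+) = \{P \in M(A) : P(0) \text{ is strictly upper triangular}\}$.

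I would first show that $\phi$ fixes the centre $Z(A) = \kk[m_\gamma]$. An automorphism of $\kk[m_\gamma] \cong \kk[t]$ is of the form $m_\gamma \mapsto a m_\gamma + b$, and combining this with $\phi(m_\gamma) \in m_\gamma + A_{>n}$ (which $\phi \in \Aut_0(A)$ guarantees, since $\deg m_\gamma = n$) forces $a = 1$, $b = 0$. Hence $\tilde\phi := \Psi \phi \Psi^{-1}$ is $\kk[xI_n]$-linear. Using $M(A) \supseteq x M_n(\kk[x])$ to identify $M(A) \otimes_{\kk[xI_n]} \kk(x) = M_n(\kk(x))$, the automorphism $\tilde\phi$ extends to a $\kk(x)$-algebra automorphism $\Phi$ of $M_n(\kk(x))$, and Skolem--Noether produces $Q \in GL_n(\kk(x))$ with $\Phi = \Delta_Q$; clearing denominators and dividing out the largest power of $x$, we may arrange $Q \in M_n(\kk[x])$ with $Q(0) \neq 0$.

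The crux is then controlling $Q$ via Lemma \ref{lem:decomp}, which factors $Q = U D M_\sigma V$ with $U, V \in B_n$, $D$ diagonal in $M_n(\kk[x])$, and $\sigma \in S_n$. Since any $U \in B_n$ equals $\Psi(\one + y)$ for a unit $\one + y \in A$ with $y \in A_+$, conjugation by $U$ preserves $M(A)$ and lies in $\Aut_0(M(A))$ (via the standard computation $\Delta_{\one+y}(p) - p = py + wp + wpy \in A_{>\deg p}$ for $p \in A_{\deg p}$, where $w = (\one+y)^{-1} - \one \in A_+$). Thus $\Delta_{DM_\sigma}$ also preserves $M(A)$ and lies in $\Aut_0(M(A))$. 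Computing $\Delta_{DM_\sigma}(E_{kl}) = (d_l/d_k)\, E_{\sigma(k), \sigma(l)}$ and applying the $\Aut_0$ leading-term condition to every $E_{kl} \in M(A)$ with $1 \leq k < l \leq n$ pins $(\sigma(k), \sigma(l)) = (k, l)$, so $\sigma = \id$, and forces $d_l/d_k \in \kk[x]$ with $d_l(0) = d_k(0)$. Requiring $\Delta_D(xE_{ij}) = x(d_j/d_i) E_{ij} \in M(A)$ for $i > j$ forces $d_j/d_i \in \kk[x]$ as well, whence $d_l/d_k \in \kk[x]^\times = \kk^\times$; combined with the normalised value $1$, this yields $d_l = d_k$ for every pair, so $D = d \cdot I_n$ is a scalar matrix.

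Once $D$ is scalar the central factor cancels in conjugation, so $\Delta_Q = \Delta_{UV}$ with $UV \in B_n$, which gives $\phi = \Delta_{\one+y}$ for some $y \in A_+$ with $\one + y \in A^\times$, placing $\phi$ in $\Inn^*(A)$. The ``consequently'' clause is then immediate from the short exact sequence of Lemma \ref{lem:aut0}: any $\psi \in \Aut(A)$ factors as $\psi_0 \circ \gr\psi$ with $\psi_0 \in \Aut_0(A) \subseteq \Inn^*(A)$ and $\gr\psi \in \grAut(A)$. The main obstacle throughout is the analysis in the third paragraph: one must track degrees in $M(A)$ precisely enough to eliminate both the permutation $\sigma$ and the non-scalar part of $D$, which is exactly where the specific shape of $M(A)$ inside $M_n(\kk[x])$ and the careful splitting supplied by Lemma \ref{lem:decomp} do the heavy lifting.
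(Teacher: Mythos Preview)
Your proposal is correct and follows essentially the same route as the paper: fix $m_\gamma$, extend to an inner automorphism of $M_n(\kk(x))$ via Skolem--Noether, apply the modified Smith form to write the conjugating matrix as $UDM_\sigma V$ with $U,V\in B_n = \Psi(\one+A_+)\cap A^\times$, and then use the $\Aut_0$ condition on the elementary matrices to force $\sigma=\id$ and $D$ scalar. The only cosmetic differences are that the paper tests just the degree-one elements $E_{i(i+1)}$ and $xE_{(i+1)i}$ rather than all $E_{kl}$, and concludes $\Delta_{DM_\sigma}=\id$ directly rather than phrasing it as $D$ being scalar.
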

\begin{proof}

    Suppose that $\phi\in \Aut_0(A)$, so that $\phi(w)\in w+A_{>\ell(w)}$ for each $w\in \cB$. As the center of $A$ is $\kk[m_\gamma]\cong \kk[x]$ (see \cite{FOZ}), $\phi$ restricts to an automorphism of $\kk[m_\gamma]$ and thus 
    $\phi(m_\gamma) = m_\gamma$. 
    
    Let $\Psi, M(A)$ be as in Lemma \ref{lem:mbij} and let $ n=|\cQ_1|$, so that $A\cong M(A)\subseteq M_n(\kk[x])$. As $\Psi(m_\gamma) =x I_n$, the corresponding automorphism $\Psi\circ \phi\circ \Psi^{-1}$ of $M(A)$ fixes $\kk[x]I_n$ and hence is a $\kk[x]$-algebra homomorphism of $M(A)$. Observe that $\kk[x]I_n$ is a central subalgebra of $M(A)$. Since $xM_n(\kk[x])\subseteq M(A)$, for any $P\in M_n(\kk(x))$ there exists $f(x)\in\kk[x]$ such that $f(x)P\in M(A)$ and hence the localization of $M(A)$ with respect to $\kk[x]I_n$ is $M_n(\kk(x))$. Thus the $\kk[x]$-automorphism $\Psi\circ \phi\circ \Psi^{-1}$ of $M(A)$ extends to a $\kk(x)$-automorphism of $M_n(\kk(x))$. That is,
    \begin{equation}
        \Aut_0(A)\xrightarrow[\phi\mapsto \Psi\circ \phi\circ \Psi^{-1}]{\cong}\Aut_{\kk[x]}(M(A))\hookrightarrow \Aut_{\kk(x)}M_n(\kk(x)).
    \end{equation}
    As all automorphisms of $M_n(\kk(x))$ are inner, there exists $P\in M_n(\kk(x))$ such that 
    \begin{equation}
        \Psi\circ \phi\circ \Psi^{-1}=\Delta_P.
    \end{equation}
    We may assume without loss of generality that $P\in M_n(\kk[x])$ (otherwise, there exists some polynomial $p(x)\neq 0$ such that $p(x)P\in M_n(\kk[x])$ and $\Delta_P= \Delta_{p(x)P}$, so we may use $p(x)P$ instead). 
    
    By Lemma \ref{lem:decomp}, $P=UDM_\sigma V$ for some $U,V\in B_n^\times$, $D$ diagonal, and $\sigma\in S_n$. 
    By definition of $B_n$, we have $B_n\subseteq M(A)^\times$. Moreover, as $U(0)-I_n, V(0)-I_n$ are strictly upper triangular (by definition of $B_n$), we see that $U,V\in \Psi(\one+A_+)$. That is, there exist elements $y,y'\in A_+$ such that  $\one+y, \one+y'$ are units in $A$ and $U=\Psi(\one+y),V=\Psi(\one+y')$. Note that $\Delta_{\one+y},\Delta_{\one+y'}\in \Inn^*(A)$.
    
    Consider $\hat{\phi}\defeq \Delta_{\one+y'}^{-1}\circ \phi\circ \Delta_{\one+y}^{-1}$. By choice of $y,y'$, we have 
    \begin{equation}\Psi\circ \hat{\phi}\circ  \Psi^{-1} = \Delta_{V^{-1}}\circ \Delta_P\circ \Delta_{U^{-1}}=\Delta_{DM_\sigma}.\end{equation}
    
    As $\Delta_{\one+y},\Delta_{\one+y'}\in \Inn^*(A)$, we have $\hat{\phi}\in \Aut_0(A)$. Then, since $\Psi(\alpha_i)=E_{i(i+1)}$ for $1\leq i<n$ and 
    \begin{align*}
        \left(\Psi\circ \hat{\phi}\right)(\alpha_i)= \Delta_{DM_\sigma}(E_{i(i+1)})=M_{\sigma^{-1}} D^{-1} E_{i(i+1)} DM_\sigma = D^{-1}_{ii}D_{(i+1)(i+1)} E_{\sigma(i)\sigma(i+1)},
    \end{align*}
   $\sigma$ must be the identity and $D^{-1}_{ii}D_{(i+1)(i+1)}\in 1+x\kk[x]$ for each $\leq i<n$. 
   Additionally, as $xE_{(i+1)i}\in M(A)$ and \begin{align*}
        \Delta_{DM_\sigma}(xE_{(i+1)i})=xD_{ii}D^{-1}_{(i+1)(i+1)} E_{(i+1)i}\in M(A),
    \end{align*}
    it must be that $(D^{-1}_{ii}D_{(i+1)(i+1)})^{-1}=D^{-1}_{ii}D_{(i+1)(i+1)}\in \kk[x]$ for each $\leq i<n$ as well. Thus $D^{-1}_{ii}D_{(i+1)(i+1)}=1$ for each $i<n$, so $\Delta_{DM_\sigma}=\id_{M(A)}$. Hence $\hat{\phi}=\Delta_{\one+y'}^{-1}\circ \phi\circ \Delta_{\one+y}^{-1}$ is the identity on $A$, so $\phi=\Delta_{\one+y'}\circ \Delta_{\one+y}=\Delta_{(\one+y')(\one+y)}\in \Inn^*(A).$
    
\end{proof}
\section{Automorphisms of a (locally) string algebra}\label{sec:main}

In this section, we will show that to each automorphism $\phi\in\Aut_0(A)$, there are corresponding automorphisms of $\bar{A}$ and $A^{(i)}$ (where these are as in Lemma \ref{lem:Abar}), and that we may describe $\phi$ in terms of such automorphisms. In particular, as $A^{(i)}$ is a locally gentle algebra with a unique maximal path, $\Aut_0(A^{(i)})=\Inn^*(A^{(i)})$ for each $i$.

\begin{proposition}\label{prop:decomp} Suppose that $A\not\cong \kk[x]$ is a (locally) string algebra.
    Let $\bar{A}$ be as in Lemma \ref{lem:Abar} and $\phi\in \Aut_0(A)$. Then there exists a type II derivation $\delta$ and $\psi\in \Inn^*(A)$ such that $\hat{\phi}=\exp(\delta)\circ \phi\circ \psi$ fixes $\cB\setminus \bar{A}$ and $\hat{\phi}\vert_{\bar{A}}\in \Aut_0(\bar{A})$.
\end{proposition}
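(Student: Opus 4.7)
The plan is to use a type II exponential to absorb the off-diagonal action of $\phi$ on arrows, restrict the resulting automorphism to each $A^{(i)}$, apply Proposition~\ref{prop:1mcase} to realize each restriction as an inner automorphism of $A^{(i)}$, and finally assemble these into a single $\psi\in\Inn^*(A)$.

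By Corollary~\ref{cor:fixAbar} there is a type II derivation $\delta$ such that $\phi_1 := \exp(\delta)\circ\phi\in\Aut_0(A)$ satisfies $\phi_1(\alpha)\in\langle\alpha\rangle$ for every $\alpha\in\cQ_1$. By Corollary~\ref{cor:mpstr}, $\langle\alpha\rangle\subseteq A^{(i)}_+$ when $\alpha\in\cQ_1^{(i)}$ ($i\geq 1$) and $\langle\alpha\rangle\subseteq\bar A_+$ when $\alpha\in\cQ_1^{(0)}$, so $\phi_1$ preserves the ideals $\bar A_+$ and each $A^{(i)}_+$. For each $i\geq 1$, define a $\kk$-linear projection $\pi_i:A\to A^{(i)}$ by sending each basis path in $A^{(i)}$ to itself and every other basis path to zero; define $\pi_0:A\to\bar A$ analogously. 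A case check on the partition $\cB_+=\cB_+^{(0)}\sqcup\bigsqcup_{j\geq 1}\cB_+^{(j)}$, using Corollary~\ref{cor:mpstr} to forbid products of basis paths from different blocks, shows each $\pi_i$ is a $\kk$-algebra homomorphism. Since $\pi_i(\phi_1(\epsilon_v))=0$ whenever $v\notin\cQ_0^{(i)}$ (no path through such a $v$ lies in $A^{(i)}$, respectively $\bar A$), $\phi_1$ preserves $\ker\pi_i$ and descends to an automorphism $\phi_1^{(i)}:=\pi_i\circ\phi_1\circ\iota_i$ in $\Aut_0(A^{(i)})$, where $\iota_i$ is the inclusion. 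For $i\geq 1$, Proposition~\ref{prop:1mcase} yields $y_i\in A^{(i)}_+$ with $\phi_1^{(i)}=\Delta_{1_{A^{(i)}}+y_i}$; in the degenerate case $A^{(i)}\cong\kk[x]$, the constraint $\phi_1^{(i)}(\alpha)\in\alpha+A^{(i)}_{>1}$ forces $\phi_1^{(i)}=\id$ and we take $y_i=0$.

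Set $y:=\sum_{i\geq 1}y_i$. Since $A^{(i)}_+A^{(j)}_+=0$ for $i\neq j$, the $y_i$ are mutually annihilating, $1+y=\prod_i(1+y_i)$ is a unit in $A$, and $\psi:=\Delta_{(1+y)^{-1}}\in\Inn^*(A)$. The key observation is $\phi_1(1+y)=1+y$: each $y_i$ commutes with $1+y_i$, and $\phi_1|_{A^{(i)}_+}=\Delta_{1+y_i}$, so $\phi_1(y_i)=y_i$. Hence $\hat\phi:=\phi_1\circ\psi=\exp(\delta)\circ\phi\circ\psi$ satisfies $\hat\phi(x)=(1+y)\phi_1(x)(1+y)^{-1}$ for all $x\in A$. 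For $w\in\cB\cap A^{(i)}_+$ the orthogonality of the $y_j$'s collapses this to $w$. For a vertex $v$, let $y_v^{(j)}:=\pi_j(\phi_1(\epsilon_v)-\epsilon_v)$; the identity
\[(1+y_j)\epsilon_v(1+y_j)^{-1}-\epsilon_v=-(1+y_j)y_v^{(j)}(1+y_j)^{-1}\qquad (j\geq 1),\]
obtained by applying $\Delta_{1+y_j}$ to $\phi_1^{(j)}(\epsilon_v)=\epsilon_v+y_v^{(j)}$, makes all contributions from $j\geq 1$ cancel in the expansion of $\hat\phi(\epsilon_v)$, leaving $\hat\phi(\epsilon_v)=\epsilon_v+y_v^{(0)}$. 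Since $y_v^{(0)}=0$ for $v\notin\cQ_0^{(0)}$, $\hat\phi$ fixes $\cB\setminus\bar A$, and on $\bar A$ the same formulae yield $\hat\phi|_{\bar A}=\phi_1^{(0)}\in\Aut_0(\bar A)$. The main technical obstacle will be the bookkeeping for vertices belonging to several $\cQ_0^{(j)}$; the cancellation identity above is the key ingredient that resolves it.
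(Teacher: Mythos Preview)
Your proof is correct and follows essentially the same approach as the paper: reduce via Corollary~\ref{cor:fixAbar}, show the resulting map descends to each $A^{(i)}$, apply Proposition~\ref{prop:1mcase} to obtain the $y_i$, and assemble $y=\sum_i y_i$. The only cosmetic differences are that the paper phrases the restriction via $\anni(A^{(i)}_+)$ rather than your $\ker\pi_i$ (these coincide) and places $\Delta_{1+y}^{-1}$ on the left, then invokes normality of $\Inn^*(A)$ to move it to the right, whereas you use the fixed-point observation $\phi_1(1+y)=1+y$ to put $\psi$ on the right directly.
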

\begin{proof}
    Suppose that $\phi\in\Aut_0(A)$. By Corollary \ref{cor:fixAbar}, there exists  a  type II  derivation $\delta$ such that $(\exp(\delta)\circ \phi)(\alpha)\in \langle \alpha\rangle$ for each arrow $\alpha\in\cQ_1$. 
    As $\exp(\delta)\circ \phi\in \Aut_0(A)$, we know that $(\exp(\delta)\circ \phi)(\epsilon_v)\in \langle \epsilon_v\rangle$ as well by Lemma \ref{lem:autpart}. 
    Since $\gamma_i$ are infinite maximal paths, we have 
    \begin{align*}\anni(A^{(i)}_+)&=\spn_\kk\setst{w\in \cB}{w\not\leq \gamma_i}\\&=\langle w\in\cB_{\leq 1}\mid w\not\leq \gamma_i\rangle\end{align*}
    for $1\leq i\leq m$.
    Hence $(\exp(\delta)\circ \phi)(\anni(A^{(i)}_+))\subseteq \anni(A^{(i)}_+)$ for each $i$. Since $A^{(i)}\cong A/\anni(A^{(i)}_+)$, it follows that $\exp(\delta)\circ \phi$ induces an automorphism of $A^{(i)}$. Since $A_+=\bar{A}_+\oplus\bigoplus_i A^{(i)}_+$, there there exist maps $\phi^{(i)}:\cB\rightarrow A^{(i)}_+$ and $\bar{\phi}:\cB\rightarrow \bar{A}_+$ so that
    \[(\exp(\delta)\circ \phi)(w)=w+\bar{\phi}(w)+\sum_{i=1}^m \phi^{(i)}(w).\]
    Since $A^{(i)}_+$ ($\bar{A}_+$) is an ideal of $A$, $\phi^{(i)}(w)\neq 0$ ($\bar{\phi}(w)\neq 0$) implies $w\in A^{(i)}$ ($w\in \bar{A}$) and the
    automorphism $\exp(\delta)\circ \phi$ induces on $A^{(i)}$ is $\hat{\phi}^{(i)}\in \Aut_0(A^{(i)})$, where
    \[ \hat{\phi}^{(i)}(w)=w+\phi^{(i)}(w)\]
for $w\in \cB\cap A^{(i)}$.
    Recall from Lemma \ref{lem:Abar} that $A^{(i)}$ is a locally gentle algebra with a unique maximal path, $\gamma_i$, which is infinite. If $A^{(i)}\cong \kk[x]$, then $\phi^{(i)}=0$ and we take $y_i=0$. Otherwise, $\hat{\phi}^{(i)}\in \Inn^*(A^{(i)})$ by Proposition \ref{prop:1mcase} and thus there exists $y_i\in A^{(i)}_+$ such that $\one_{A^{(i)}}+y_i\in (A^{(i)})^\times$ and $\hat{\phi}^{(i)}=\Delta_{\one_{A^{(i)}}+y_i}$. As $\one_A-\one_{A^{(i)}}\in \anni(A^{(i)})$, it follows that $\one+y_i\in A^\times$. Noting that $(\one+y_i)^{-1}-\one_A\in A^{(i)}_+$ (so $\Delta_{\one+y_i}(w)=w$ if $w\in \cB\setminus A^{(i)}$), we see that 
    $\Delta_{\one+y_i}(w)=
    w+\phi^{(i)}(w)$ for any $w\in \cB$. Let 
    \[y\defeq \prod_{i=1}^m(\one+y_i)-\one=\sum_{i=1}^m y_i.\]
    Then $\Delta_{\one+y}\in \Inn^*(A)$
    and
    $\Delta_{\one+y}(w)=w+\sum\limits_{i+1}^m\phi^{(i)}(w)$
    for $w\in\cB$. If $\hat{\phi}\defeq \Delta_{\one+y}^{-1}\circ \exp(\delta)\circ \phi$, then
    \[\hat{\phi}(w)=w+\bar{\phi}(w)\]
     for any $w\in \cB$.
    As $\bar{\phi}(w)\in \bar{A}_+$ and $\bar{\phi}(w)\neq 0$ implies $w\in \bar{A}$, we see that $\hat{\phi}$ fixes $\cB\setminus \bar{A}$ and $\hat{\phi}\vert_{\bar{A_0}}\in \Aut_0(\bar{A})$.  As $\Inn^*(A)$ is a normal subgroup of $\Aut_0(A)$, 
    \[\hat{\phi}=\Delta_{\one+y}^{-1}\circ \exp(\delta)\circ \phi=\exp(\delta)\circ \phi\circ \psi\]
    for some $\psi\in \Inn^*(A),$ so $\hat{\phi}$ is the desired automorphism.
\end{proof}

We may now apply Theorem \ref{thm:strA0} to $\bar{A}$ to conclude the following.
\begin{theorem}\label{thm:decomp} Suppose that $A\not\cong \kk[x]$ is a (locally) string algebra. Then, $\phi\in \Aut_0(A)$ may be decomposed into a composition of type I and II exponential automorphisms and $\psi\in \Inn^*(A)$. 
\end{theorem}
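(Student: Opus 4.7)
The plan is to reduce, via Proposition~\ref{prop:decomp}, to an automorphism supported on $\bar{A}$, then invoke the finite-dimensional string result Theorem~\ref{thm:strA0} component-wise on $\bar{A}$, and finally lift the resulting decomposition back to $A$.

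First I would apply Proposition~\ref{prop:decomp} to $\phi \in \Aut_0(A)$, producing a type II derivation $\delta_0$ and a $\psi_0 \in \Inn^*(A)$ such that $\hat{\phi} \defeq \exp(\delta_0)\circ \phi \circ \psi_0$ is the identity on $\cB\setminus \bar{A}$ and restricts to an element of $\Aut_0(\bar{A})$. Since $\Inn^*(A)$ is normal in $\Aut_0(A)$, any decomposition of the form (type I/II exponentials)$\cdot$(inner) for $\hat{\phi}$, once lifted from $\bar{A}$ to $A$, gives a decomposition of $\phi$ of the same form after rearranging.

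Next I would analyse $\hat{\phi}|_{\bar{A}}$. The algebra $\bar{A}$ is a direct sum $\bigoplus_j \bar{A}^{(j)}$, one summand per connected component of $\cQ^{(0)}$; each $\bar{A}^{(j)}$ is a (connected) string algebra, which is finite dimensional by the boundedness statement in Corollary~\ref{cor:mpstr}. The central idempotents $\one_{\bar{A}^{(j)}}$ lie in degree $0$ and lift uniquely through the nilpotent ideal $\bar{A}_+ = \rJ(\bar{A})$, so any automorphism in $\Aut_0(\bar{A})$ (which fixes them modulo $\bar{A}_+$) must in fact fix each $\one_{\bar{A}^{(j)}}$, hence preserves each summand. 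I could then apply Theorem~\ref{thm:strA0} to each restriction $\hat{\phi}|_{\bar{A}^{(j)}} \in \Aut_0(\bar{A}^{(j)})$ to write it as a composition of type I and II exponential automorphisms of $\bar{A}^{(j)}$ together with an inner automorphism $\Delta_{\one_{\bar{A}^{(j)}}+y'_j}$ with $y'_j\in \bar{A}^{(j)}_+$.

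The last step is the lift from $\bar{A}$ to $A$, which is where the main care is needed. The key structural input is Lemma~\ref{lem:Abar}: $\bar{A}_+$ is an ideal of $A$ that annihilates each $A^{(i)}_+$ on both sides. This implies that the sum $y' \defeq \sum_j y'_j$ gives a unit $\one_A + y' \in A^\times$ whose associated inner automorphism $\Delta_{\one_A+y'} \in \Inn^*(A)$ agrees with $\prod_j \Delta_{\one_{\bar{A}^{(j)}}+y'_j}$ on $\bar{A}$ and acts as the identity on every $A^{(i)}$. Similarly, each type I (respectively II) derivation on $\bar{A}^{(j)}$ extends to a derivation of $A$ of the same type by setting it to zero on arrows not in $\cQ_1^{(0)}$; the extension really is of type I/II because the generators $x_a\in a\bar{A}a = aAa$ for $a\in \cQ_1^{(0)}$ and the left maximal paths parallel to $a\in \cQ_1^{(0)}$ in $\bar{A}$ coincide with the corresponding data in $A$. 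Assembling these extensions and combining with $\exp(\delta_0)$ and $\psi_0$ from the first step, using normality of $\Inn^*(A)$ to commute inner automorphisms past exponential ones, yields the desired decomposition.

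The main obstacle, as I see it, is the final lifting step: one must be precise that the type I and II derivations constructed component-wise on $\bar{A}^{(j)}$ actually assemble into a single derivation of $A$ of the prescribed type, and that the product of the local inner automorphisms is realised by conjugation by a single unit $\one_A + y'$ of $A$. Both rely essentially on the orthogonality relations $\bar{A}_+ \cdot A^{(i)}_+ = 0 = A^{(i)}_+ \cdot \bar{A}_+$ from Lemma~\ref{lem:Abar}, so the proof reduces to a careful bookkeeping exercise once that input is invoked.
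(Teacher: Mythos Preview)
Your proposal is correct and takes essentially the same approach as the paper: reduce to $\bar{A}$ via Proposition~\ref{prop:decomp}, apply Theorem~\ref{thm:strA0} component-wise on the string summands of $\bar{A}$, then lift the resulting type I/II derivations and inner part back to $A$ using the orthogonality $\bar{A}_+\cdot A^{(i)}_+ = 0$ from Lemma~\ref{lem:Abar}. The only cosmetic difference is that the paper observes $\hat{\phi}(\alpha)\in\langle\alpha\rangle$ (built into the construction in Proposition~\ref{prop:decomp} via Corollary~\ref{cor:fixAbar}) to see directly that the components of $\bar{A}$ are preserved, rather than invoking uniqueness of central idempotent lifts as you do.
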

In the (locally) gentle  case, there are no derivations of type I. So, as type II  derivations commute, the composition of exponential automorphisms is itself a type II exponential automorphism.
\begin{proof}
Let $\phi\in \Aut_0(A)$. By Proposition \ref{prop:decomp}, there exists a  type II derivation $\delta$ and $\psi\in \Inn^*(A)$ such that $\hat{\phi}=\exp(\delta)\circ \phi\circ \psi$ fixes $\cB\setminus \bar{A}$ and $\hat{\phi}\vert_{\bar{A}}\in \Aut_0(\bar{A})$. Since $\bar{A}$ is a direct sum of string algebras and, by choice of $\delta$, we have $\hat{\phi}(\alpha)\in \langle \alpha\rangle$ for any $\alpha\in\cQ_1$ and $\hat{\phi}(\epsilon_{v})\in \langle \epsilon_v\rangle$ for any $v\in\cQ_0$, it follows from Theorem \ref{thm:strA0} there exist nilpotent derivations $\delta_i\in \Der(\bar{A})$ and $y\in \bar{A}_+$ such that 
\[\hat{\phi}\vert_{\bar{A}}=\exp(\delta_1)\circ \cdots \circ \exp(\delta_k)\circ \Delta_{\one_{\bar{A}}+y}.\] 

Since $\bar{A}_+$ kills $\cB\setminus \bar{A}$ and $\cB_0\setminus \bar{A}$ kills $\bar{A}$, we see that $\Delta_{\one_A+y}\in \Inn^*(A)$ fixes $\cB\setminus \bar{A}$ and $\Delta_{\one_A+y}\vert_{\bar{A}}=\Delta_{\one_{\bar{A}}+y}$.
We may extend $\delta_i\in \Der(\bar{A})$ to a derivation $\hat{\delta}_i$ on $A$ which kills $w\in \cB\setminus \bar{A}$. Then $\hat{\delta}_i$ is still nilpotent and that $\exp(\hat{\delta}_i)\vert_{\bar{A}}=\exp(\delta_i)$ while 
$\exp(\hat{\delta}_i)$ fixes $\cB\setminus \bar{A}$.
Thus, 
\[\hat{\phi}=\exp(\hat{\delta}_1)\circ \cdots \circ \exp(\hat{\delta}_k)\circ \Delta_{\one_{{A}}+y},\]
so 
\[\phi=\exp(-\delta)\circ \exp(\hat{\delta}_1)\circ \cdots \circ \exp(\hat{\delta}_k)\circ \psi'\]
for some $\psi'\in \Inn^*(A)$.  Moreover, since we may assume that $\delta_i$ are type I and II derivations (viewed as derivations on the string subalgebras of $\bar{A}$ corresponding to connected components of $\cQ^{(0)}$),  $\hat{\delta}_i$ will also be derivations of type I and II on $A$.
\end{proof}

As a consequence,  $\Aut(A)=\Hhat{A}\cdot \Inn^*(A)$  holds when $A$ is locally string as well. 
\begin{corollary}
    Let $A$ be a (locally) string algebra. Then  $\Aut(A)=\Hhat{A}\cdot \Inn^*(A)$.
\end{corollary}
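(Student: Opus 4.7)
The plan is to assemble the corollary directly from the main decomposition theorem together with the short exact sequence of Lemma \ref{lem:aut0}. Given any $\phi\in\Aut(A)$, I first want to split off the graded part, then decompose the remainder via Theorem \ref{thm:decomp}, and finally observe that each factor sits in $\Hhat{A}$ or $\Inn^*(A)$.

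First I would dispose of the degenerate case $A\cong\kk[x]$. Here the quiver has a single vertex, so $S=\kk\cQ_0=\kk$, and every $\kk$-algebra automorphism preserves $\kk\one$; hence $\Hhat{A}=\Aut(A)$ and the claim is trivial (regardless of the fact that $\Inn^*(\kk[x])=1$). From now on assume $A\not\cong\kk[x]$.

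Next, for $\phi\in\Aut(A)$, apply Lemma \ref{lem:aut0} to obtain the graded automorphism $\gr\phi\in\grAut(A)$. Set $\phi_0 := (\gr\phi)^{-1}\circ\phi$; since $\gr$ is a group homomorphism with kernel $\Aut_0(A)$, we have $\phi_0\in\Aut_0(A)$, and
\[
\phi \;=\; \gr\phi\,\circ\,\phi_0.
\]
Because $\gr\phi$ preserves the grading it sends $A_0=\kk\cQ_0=S$ into $S$, so $\gr\phi\in\Hhat{A}$.

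Now invoke Theorem \ref{thm:decomp} on $\phi_0$ to write
\[
\phi_0 \;=\; \exp(\delta_1)\circ\cdots\circ\exp(\delta_k)\circ\psi
\]
with each $\delta_i$ a derivation of type I or II and $\psi\in\Inn^*(A)$. By Lemma \ref{lem:nilpder}, each $\exp(\delta_i)$ lies in $H_A\cap\Aut_0(A)\subseteq\Hhat{A}$, so the composition $\eta:=\exp(\delta_1)\circ\cdots\circ\exp(\delta_k)$ is an element of $\Hhat{A}$. Combining,
\[
\phi \;=\; (\gr\phi)\circ\eta\circ\psi \;\in\; \Hhat{A}\cdot\Hhat{A}\cdot\Inn^*(A) \;=\; \Hhat{A}\cdot\Inn^*(A).
\]
The reverse containment $\Hhat{A}\cdot\Inn^*(A)\subseteq\Aut(A)$ is automatic, finishing the argument.

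There is no real obstacle here: every ingredient has already been established, and the only thing to verify is the placement of each factor in the correct subgroup. The one point worth stating carefully in the writeup is that exponential automorphisms of type I and II belong to $H_A$ (so in particular to $\Hhat{A}$) and that graded automorphisms belong to $\Hhat{A}$; both follow by inspection of definitions from Lemma \ref{lem:nilpder} and Lemma \ref{lem:aut0} respectively.
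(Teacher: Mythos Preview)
Your proof is correct and follows essentially the same route as the paper: split off $\gr\phi$ via Lemma~\ref{lem:aut0}, apply Theorem~\ref{thm:decomp} to the remaining $\Aut_0(A)$ factor, and observe that the exponential pieces lie in $H_A\subseteq\Hhat{A}$. Your justification that $\gr\phi\in\Hhat{A}$ (since a graded automorphism preserves $A_0=\kk\cQ_0=S$) is in fact more direct than the paper's, which invokes an external reference to argue that graded automorphisms permute vertices.
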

\begin{proof}
Combining Lemma \ref{lem:aut0} and Theorem \ref{thm:decomp}, any  $f\in \Aut(A)$ may be written as a composition of a graded automorphism ($\gr f$), a composition of exponential automorphisms of type I and II, and an inner automorphism in $\Inn^*(A)$. As exponential automorphisms of type I and II belong to $H_A$, it suffices to show that a graded automorphism of $A$ must belong to $\Hhat{A}$. It was shown in \cite{JG} that for any automorphism $f$ of $A$, there exists a permutation $\sigma\in S_{\cQ_0}$ such that $f(\epsilon_v)\in \epsilon_{\sigma(v)}+A_+$. Thus any graded automorphism of $A$ acts as a permutation on its vertices, so $\gr(f)\in \Hhat{A}$.
\end{proof}

\subsection{(Locally) gentle automorphisms}
Now, we consider the special case where $A$ is (locally) gentle.
Theorem \ref{thm:decomp} tells us that $\phi\in \Aut_0(A)$ is a composition of a type II exponential automorphism and an inner automorphism. We will show that we can factor out another inner automorphism from the type II exponential automorphism to obtain a unique decomposition.

\begin{lemma}\label{lem:barg}
    Let $A$ be (locally) string. For any arrow $\alpha$, there is at most one finite maximal path which is parallel to $\alpha$ but neither starts nor ends with $\alpha$. We denote  such a path by $\bar{\gamma}_\alpha$.
\end{lemma}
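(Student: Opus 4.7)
The plan is to use the defining conditions of a (locally) string algebra to show that any candidate for $\bar{\gamma}_\alpha$ is forced to be completely determined by $\alpha$, hence at most one exists.

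First I would pin down the first and last arrows of such a path $p$. Since $s(p)=s(\alpha)$ and $F(p)\neq\alpha$, the arrow $F(p)$ shares its source with $\alpha$; the bound $\outdeg(s(\alpha))\leq 2$ from Definition~\ref{def:string}(i) then forces $F(p)$ to be the unique arrow other than $\alpha$ leaving $s(\alpha)$. Symmetrically, $\indeg(t(\alpha))\leq 2$ pins down $L(p)$ as the unique arrow other than $\alpha$ arriving at $t(\alpha)$.

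Next I would show that the internal arrows of $p$ propagate deterministically. Writing $p=\beta_1\beta_2\cdots\beta_k$, the observation recorded just after Notation~\ref{not:algebra} says that for any path $q\in\cB$ there is at most one arrow $\gamma$ with $q\gamma\notin\cI$. Applied with $q=\beta_1\cdots\beta_i$, this pins down $\beta_{i+1}$ uniquely whenever it exists. Hence any two admissible paths sharing the same first arrow must agree arrow-by-arrow for as long as both continue.

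To conclude, if $p$ and $q$ are two finite maximal paths parallel to $\alpha$ with $F(p),F(q)\neq\alpha$ and $L(p),L(q)\neq\alpha$, the two preceding steps give $F(p)=F(q)$ and force the paths to coincide on a common prefix until one of them terminates. Right maximality of both then forces them to have the same length, so $p=q$. I expect no serious obstacle here: the string conditions do essentially all the work. The only care required is for the degenerate cases $\ell(p)=1$ (where $F(p)=L(p)$) and $\alpha$ a loop, but both remain covered by the outdegree/indegree bounds established in the first step.
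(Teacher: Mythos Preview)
Your proof is correct and follows essentially the same route as the paper: determine $F(p)$ via the outdegree bound, then use the string condition that each path has at most one right extension to conclude uniqueness. The paper compresses your steps 2--3 into the single assertion ``there is at most one finite maximal path with first arrow $\beta$,'' whereas you spell out the deterministic propagation and invoke right maximality explicitly; your symmetric determination of $L(p)$ is correct but superfluous, since fixing $F(p)$ already suffices.
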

\begin{proof}
    The uniqueness of $\bar{\gamma}_\alpha$, when it exists, is guaranteed by the fact that there is at most one arrow $\beta\neq \alpha$ with source $s(\alpha)$, so any path parallel to $\alpha$ which does not start with $\alpha$ must begin with $\beta$.
    Since $A$ is (locally) string, there is at most one finite maximal path with first arrow $\beta$. 
\end{proof}
The same argument applies with $L(\bar{\gamma})$. Hence, if there is a second finite maximal path $w$ parallel to $\alpha$, then $F(w)=L(w)=\alpha$. In the (locally) gentle case, $w=\alpha$.

\begin{notation}
    We will say that a derivation $\delta$ is \emph{type III} if, for any $\lambda_\alpha\in \kk$, \[\delta=\sum_{\substack{\alpha\in\cQ_1\\ \bar{\gamma}_\alpha\text{ exists}}}\lambda_\alpha\delta_{(\alpha, \bar{\gamma}_\alpha)}.\]
    If $A$ is (locally) gentle and not isomorphic to $\kk K$, then type III derivations commute and are all nilpotent of degree two.
    In that case, $\exp(\delta)$ is called a type III exponential automorphism. 
\end{notation}

\begin{lemma}\label{lem:typeIII}
    Suppose that $A$ is (locally) gentle and not isomorphic to $\kk[x]$ or $\kk K$. The type III exponential automorphisms of $A$ form a normal subgroup $E(A)$ of $H_A$ which is isomorphic to $\kk^n$ (where $n$ is the number of arrows $\alpha$ for which $\bar{\gamma}_\alpha$ exists). 
     Then  $E(A)$ is a normal subgroup of $H_A$ and has trivial intersection with $\Inn^*(A)$. Moreover, $\Aut_0(A)=E(A)\cdot  \Inn^*(A)$.
\end{lemma}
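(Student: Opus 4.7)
The plan is to establish four conclusions in turn: (A) $E(A)$ is a subgroup of $H_A\cap \Aut_0(A)$ isomorphic to $\kk^n$; (C) $E(A)\cap \Inn^*(A)=\{1\}$; (D) the decomposition $\Aut_0(A)=E(A)\cdot \Inn^*(A)$; and (B) normality of $E(A)$ in $H_A$.

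For (A), I will use the preceding observation that type III derivations pairwise commute and square to zero, so the map $(\lambda_\alpha)\mapsto \exp\bigl(\sum_\alpha \lambda_\alpha \delta_{(\alpha,\bar{\gamma}_\alpha)}\bigr)$ is a group homomorphism from $\kk^n$ to $E(A)$. Injectivity follows by evaluating at the arrow $\alpha$ to get $\alpha+\lambda_\alpha \bar{\gamma}_\alpha$ and noting that $\bar{\gamma}_\alpha$ and $\alpha$ are distinct basis elements (the hypothesis $A\not\cong \kk K$ rules out the single collapse case). Membership in $H_A\cap \Aut_0(A)$ is Lemma \ref{lem:nilpder}. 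For (C), if $\exp(\delta)=\Delta_{1+y}$ with $\delta$ of type III and $y\in A_+$, comparing $(1+y)(\alpha+\lambda_\alpha \bar{\gamma}_\alpha)=\alpha(1+y)$ and using $y\bar{\gamma}_\alpha=0$ (left maximality of $\bar{\gamma}_\alpha$) yields $\lambda_\alpha \bar{\gamma}_\alpha = \alpha y - y\alpha$; since $\alpha y$ expands into basis paths starting with $\alpha$ and $y\alpha$ into paths ending with $\alpha$, while $\bar{\gamma}_\alpha$ does neither, each $\lambda_\alpha$ must vanish.

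For (D), Theorem \ref{thm:decomp} in the (locally) gentle case writes each $\phi\in \Aut_0(A)$ as $\exp(\delta)\circ \psi$ with $\delta$ of type II and $\psi\in \Inn^*(A)$, so it suffices to show any type II exponential lies in $E(A)\cdot \Inn^*(A)$. Writing $\delta=\sum_\alpha \delta_{(\alpha, y_\alpha)}$, I decompose $y_\alpha=\lambda_\alpha \bar{\gamma}_\alpha + z_\alpha$ where $z_\alpha$ is a combination of left maximal paths of the form $p\alpha$ (with $p$ left maximal) or $\alpha q$. An induction on the minimum length of a path appearing in any $z_\alpha$ absorbs the $z_\alpha$-part into $\Inn^*(A)$: for a leading contribution $cp\alpha$, the inner automorphism $\Delta_{1-cp}$ satisfies $\Delta_{1-cp}(\alpha)=\alpha+cp\alpha+O(A_{>\ell(p\alpha)})$ (using $\alpha p=0$ from left maximality of $p$), which cancels the leading contribution modulo strictly higher-degree terms; the symmetric case of a leading term $\alpha q$ is handled by $\Delta_{1+c'q}$.

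For (B), given $\phi\in H_A$ and $\psi\in E(A)$, direct expansion gives $(\phi\psi\phi^{-1})(x)-x=\phi\bigl(\sum_{\beta\in \cQ_1} c_\beta(\phi^{-1}(x))\lambda_\beta\, \bar{\gamma}_\beta\bigr)$, where $c_\beta$ extracts the coefficient of the arrow $\beta$. Since $\phi$ preserves the degree filtration (Lemma \ref{lem:aut0}), $\phi^{-1}(w)$ has no arrow component when $\ell(w)\geq 2$, so $\phi\psi\phi^{-1}$ fixes every non-arrow basis element; moreover, each $\phi(\bar{\gamma}_\beta)$ is left and right annihilated by $A_+$ (as $\phi^{\pm 1}$ preserves $A_+$), hence lies in the span of finite maximal paths parallel to $\alpha$. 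A structural bookkeeping argument, using the gentle hypothesis to limit the parallel arrows involved, confirms $(\phi\psi\phi^{-1})(\alpha)-\alpha\in\kk \bar{\gamma}_\alpha$, so $\phi\psi\phi^{-1}\in E(A)$. The main obstacle will be the inductive cancellation in (D): each $\Delta_{1\pm cp}$ perturbs not only $\alpha$ but also every arrow $\beta$ with $p\beta\neq 0$, and these side effects must be tracked simultaneously, analogously to the argument in Proposition \ref{prop:strfixv}.
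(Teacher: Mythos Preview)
Your plan is sound and each part would go through, but part (D) is considerably more work than necessary. The paper avoids your inductive cancellation entirely by observing that each \emph{generator} $\exp(\delta_{(\alpha,\lambda p)})$ of the commutative group of type~II exponentials, with $p\neq \bar{\gamma}_\alpha$, is \emph{exactly} an inner automorphism, not merely one modulo higher-degree terms. Concretely, if $p=p'\alpha$ with $p'$ left maximal, then for every $w\in\cB_+$ one has $\delta_{(\alpha,\lambda p)}(w)=\lambda p'w$ (the other occurrences of $\alpha$ in $w$ contribute zero because $p'$ is left maximal, and $p'w=0$ unless $F(w)=\alpha$ by gentleness), and since $wp'=0$ this gives $\exp(\delta_{(\alpha,\lambda p)})=\Delta_{\one-\lambda p'}$ on the nose. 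The case $p=\alpha p''$ with $p''$ right maximal gives $\exp(\delta_{(\alpha,\lambda p)})=\Delta_{\one+\lambda p''}$ symmetrically. Since type~II exponentials commute, this decomposes any type~II exponential as a product of type~III exponentials and elements of $\Inn^*(A)$ directly, with no side-effect bookkeeping; the obstacle you flag simply does not arise.

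Your argument for (C) is in fact cleaner than the paper's: the paper splits into the cases $\alpha\not\leq\bar{\gamma}_\alpha$ and $\alpha\leq\bar{\gamma}_\alpha$, with a more involved coefficient analysis in the second case, whereas your observation that $\bar{\gamma}_\alpha$ has coefficient zero in $\alpha y-y\alpha$ (since it neither starts nor ends with $\alpha$) handles both at once. For (B), the paper proceeds by isolating the single key step: for $\phi\in H_A$ one has $\phi(\bar{\gamma}_\alpha)\in\kk^\times\bar{\gamma}_\alpha$, because $\phi(\bar{\gamma}_\alpha)$ must be a combination of finite maximal paths parallel to $\alpha$, and in the (locally) gentle case with $A\not\cong\kk K$ the only such path of length $\geq\ell(\bar{\gamma}_\alpha)$ is $\bar{\gamma}_\alpha$ itself; this makes the conjugation formula immediate and bypasses your coefficient-extraction computation.
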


\begin{proof}
    Since type III derivations commute, it follows that type III automorphisms are closed under composition and the subgroup $E(A)$ is generated by
    \[\{\exp(\delta_{(\alpha, \lambda\bar{\gamma}_\alpha)})\}_{\substack{\alpha\in\cQ_1,\\ \bar{\gamma}_\alpha\text{ exists}}}.\]
    It follows that $E(A)\cong \kk^n$. To see that $E(A)$ is  a normal subgroup of $H_A$, let $\lambda_\alpha\in \kk$ and let 
\[\delta=\sum_{\substack{\alpha\in\cQ_1\\ \bar{\gamma}_\alpha\text{ exists}}}\lambda_\alpha\delta_{(\alpha, \bar{\gamma}_\alpha)}.\]
    Then $\exp(\delta)$ takes an arrow $\alpha$ to $\alpha+ \lambda_\alpha \bar{\gamma}_\alpha$ when $\bar{\gamma}_\alpha$ exists, and fixes all other paths. As any $\phi\in H_A$ fixes vertices and must take maximal paths to maximal paths, we see that $\phi(\bar{\gamma}_\alpha)$ must be a linear combination of maximal paths parallel to $\alpha$. As $A$ is (locally) gentle, the only such paths are $\bar{\gamma}_\alpha$ and $\alpha$. As $A\not\cong \kk K$, parallel arrows cannot both be maximal. Thus, either $\ell(\bar{\gamma}_\alpha)>1$ or $\alpha$ is not maximal. As $\phi(A_{\geq n})\subseteq A_{\geq n}$ (Lemma \ref{lem:Jautpres}), we see that $\phi(\bar{\gamma}_\alpha)$ cannot contain $\alpha$ in either case. Hence $\phi(\bar{\gamma}_\alpha)=\bar{\lambda}_\alpha \bar{\gamma}_\alpha$ for some $\bar{\lambda}_\alpha\in \kk^\times$, so
    \[\phi\circ \exp(\delta)=\exp\left(\sum_{\substack{\alpha\in\cQ_1\\ \bar{\gamma}_\alpha\text{ exists}}}\bar{\lambda}_\alpha\lambda_\alpha\delta_{(\alpha, \bar{\gamma}_\alpha)}\right)\circ \phi.\]

    Suppose that $\delta\in E(A)\cap \Inn^*(A)$. If $\alpha\not\leq \bar{\gamma}_\alpha$, then $\exp(\delta)(\alpha)=\alpha+\lambda_\alpha \bar{\gamma}_\alpha\in\langle \alpha\rangle$ implies $\lambda_\alpha=0$.  If $\alpha\leq \bar{\gamma}_\alpha$, then $\bar{\gamma}_\alpha$ must be the unique maximal path containing $\alpha$ as $A$ is (locally) gentle (i.e., $\bar{\gamma}_\alpha=\gamma_\alpha$). As $\gamma_\alpha$ is finite,  $\alpha$ occurs only once in $\gamma_\alpha,$ so  ${\gamma}_\alpha=p'\alpha p''$ for a unique pair $p', p''\in \cB_+$ such that $\alpha\not\leq p',p''$. Thus, for $x,y\in A$, the coefficient of ${\gamma}_\alpha$ in $x\alpha y$ must be the product of the coefficient of $p'$ in $x$ and $p''$ in $y$. Consequently, if ${\gamma}_\alpha$ appears with zero coefficient in $x\alpha y$, then both $p'\alpha$ and $\alpha p''$ must as well. Since both $p'$ and $p''$ are nonstationary (so $\alpha \lneq p'\alpha, \alpha p''\lneq p'\alpha p''$), there does not exist any inner automorphism $\psi$ such that $\psi(\alpha) =\alpha+\lambda \gamma_\alpha$ with $\lambda\neq 0$. Thus $\lambda_\alpha$ is zero when $\alpha\leq \bar{\gamma_\alpha}$ as well, so $\delta$ must be trivial.

To see that $\Aut_0(A)=E(A)\cdot \Inn^*(A)$, we will show that  an automorphism of type II decomposes into a type III automorphism and an inner automorphism (recall that $A$ locally gentle has no type I automorphisms). Like type III automorphisms, type II automorphisms form a commutative subgroup of $\Aut_0(A)$. This subgroup is generated by the automorphisms $\exp(\delta_{(\alpha,\lambda p)})$ where $\alpha\in\cQ_1$, $\lambda\in\kk$, and $p\in\cB\cap \epsilon_{s(\alpha)}A_{>1}\epsilon_{t(\alpha)}$ is a left-maximal and either $L(p)=\alpha$ or $p$ is right-maximal. So, it suffices to show that $\exp(\delta_{(\alpha,\lambda p)})\in \Inn^*(A)$ if $p\neq \bar{\gamma}_\alpha$.

Suppose  $L(p)=\alpha$, so $p=p'\alpha$ for some $p'\in\cB_+$. Since $A$ is (locally) gentle, $p$ left-maximal implies that $p'$ is also left-maximal. For any $w\in\cB_+$, we have $\delta_{(\alpha,\lambda p)}(w)=\lambda p'w$ and so
\[\exp(\delta_{(\alpha,\lambda p)})(w)= w+\lambda p' w = (\one+\lambda p')w(\one-\lambda p')= \Delta_{\one-p'}(w),\]
since $p'$ is left-maximal. As $p'\alpha$ is parallel to $\alpha$, we have $p'\in \epsilon_{s(\alpha)}A\epsilon_{s(\alpha)}$ and so
\[\exp(\delta_{(\alpha,\lambda p)})(\epsilon_v)= \epsilon_v= \Delta_{\one-p'}(\epsilon_v)\]
as well. Thus, since $p'\in \cB_+$, we see that $\exp(\delta_{(\alpha,\lambda p)})= \Delta_{\one-p'}\in \Inn^*(A)$.

If $p$ does not end in $\alpha$ and $p\neq \bar{\gamma}_\alpha$, then $p$ is maximal and $p=\alpha p''$ for some $p''\in\cB_+$. Similar to the above case, it follows that $\exp(\delta_{(\alpha,\lambda p)})= \Delta_{\one+p''}\in \Inn^*(A)$.
\end{proof}

\begin{lemma}\label{lem:Hgr}\cite{CEFO25}
Let $A$ be (locally) gentle and not isomorphic to $\kk[x]$ or $\kk K$. Let $C(A)$ denote the subgroup of graded automorphisms of $A$ which fix vertices and take each arrow $\alpha$ to a scalar multiple of itself.
Then  $H_A\cap \grAut(A)$ is isomorphic to $\ZZ/2\ZZ\ltimes C(A)$ if the quiver underlying $A$ is \vspace{.1cm}

    \begin{center}
    \begin{tikzpicture}
        \unvtx{1}{-7,0}
        \unvtx{2}{-6,0}
        \unvtx{3}{-5,0}
        \unvtx{4}{-3,0}
        \node (5) at (-4,0) {$\cdots$};
        \undbl{1}{2}
        \undbl{2}{3}
        \undbl{3}{5}
        \undbl{5}{4}
\node at (-2, 0) {or};
    \unvtx{0}{1,0}
    \unvtx{1}{.5,.866}
    \unvtx{2}{-.5,.866}
    \unvtx{3}{-1,0}
    \unvtx{{$k-2$}}{-.5,-.866}
    \unvtx{{$k-1$}}{.5,-.866}
    
    \draw[->,transform canvas={yshift=.03cm, xshift=.03cm}] (0) to (1);
    \draw[->,transform canvas={yshift=-.03cm, xshift=-.03cm}] (0) to (1);
    \draw[->,transform canvas={yshift=.05cm}](1) to (2);
    \draw[->,transform canvas={yshift=-.05cm}]  (1) to (2);
    \draw[->,transform canvas={yshift=.03cm, xshift=-.03cm}] (2) to (3);
    \draw[->,transform canvas={yshift=-.03cm, xshift=.03cm}] (2) to (3);
    \draw[->,transform canvas={yshift=.03cm, xshift=.03cm}](3) to ({$k-2$});
    \draw[->,transform canvas={yshift=-.03cm, xshift=-.03cm}]  (3) to ({$k-2$});
    \draw[->,dotted, transform canvas={yshift=.05cm}] ({$k-2$}) to ({$k-1$});
    \draw[->,dotted, transform canvas={yshift=-.05cm}] ({$k-2$}) to ({$k-1$});
    \draw[->,transform canvas={yshift=.03cm, xshift=-.03cm}]  ({$k-1$}) to (0);
    \draw[->,transform canvas={yshift=-.03cm, xshift=.03cm}]  ({$k-1$}) to (0);
\end{tikzpicture}
\end{center}  
and a subset of $E(A)\cdot C(A)$ otherwise.
\end{lemma}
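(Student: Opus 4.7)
The plan is to analyze how $\phi \in H_A \cap \grAut(A)$ acts on the arrows of $\cQ$. Since $\phi$ is graded and fixes each vertex $\epsilon_v$, for every arrow $\alpha : u \to v$ the image $\phi(\alpha)$ must be a $\kk$-linear combination of arrows from $u$ to $v$. Because $A$ is (locally) gentle with $\indeg(v), \outdeg(v) \leq 2$, there are at most two parallel arrows between any ordered pair of vertices, so $\phi$ is encoded by a collection of invertible $1\times 1$ or $2\times 2$ matrices. If $\cQ$ has no parallel arrows, then $\phi(\alpha) = c_\alpha \alpha$ for each arrow and $\phi \in C(A) \subseteq E(A) \cdot C(A)$.

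Otherwise, I would fix a pair of parallel arrows $\alpha, \bar\alpha : u \to v$ and write $\phi(\alpha) = p\alpha + q\bar\alpha$, $\phi(\bar\alpha) = r\alpha + s\bar\alpha$ with $ps - qr \neq 0$. The next step is to extract bilinear constraints on $(p,q,r,s)$ from $\phi(\cI) \subseteq \cI$ combined with conditions (ii') and (iii') of Definition \ref{def:string}: for each arrow $\gamma$ with $t(\gamma) = u$, exactly one of $\gamma\alpha, \gamma\bar\alpha$ is in $\cI$, and the analogous ``exactly-one'' condition must continue to hold after applying $\phi$. I would then show that if $\gamma$ is not itself one of two parallel arrows entering $u$ (and, symmetrically, on the side of $v$), then one of the off-diagonal entries $q, r$ is forced to vanish. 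If both vanish $\phi$ is already in $C(A)$; if only one vanishes, then the partial-maximality it encodes forces $\bar\alpha$ to be finite maximal, so $\bar\gamma_\alpha = \bar\alpha$, and composing $\phi$ with the type III exponential $\exp(\delta_{(\alpha,-(q/p)\bar\gamma_\alpha)})$ followed by a diagonal element of $C(A)$ recovers the identity on the pair. Iterating across all parallel pairs yields $\phi \in E(A) \cdot C(A)$.

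Finally, I would treat the remaining case where no such asymmetry occurs at any vertex, so every vertex of $\cQ$ has all its incoming and outgoing arrows organized into parallel pairs. A short classification argument using $\indeg, \outdeg \leq 2$ together with connectedness of $\cQ$ yields precisely the linear doubled and cyclic doubled quivers pictured in the statement. For each of these, I would verify directly that the simultaneous swap $\alpha_i \leftrightarrow \bar\alpha_i$ of every parallel pair preserves $\cI$: at each vertex the gentle conditions prescribe exactly two length-two relations, and either the identity or the swap on each pair automatically fixes or interchanges them. The resulting swap is an element of $H_A \cap \grAut(A)$ not lying in $C(A)$, and combined with $C(A)$, on which the swap acts by permuting the scalars attached to paired arrows, it produces the semidirect product $\ZZ/2\ZZ \ltimes C(A)$.

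The hardest part will be the case analysis propagating the triangularization constraint from each parallel pair to its neighbors via connectedness of $\cQ$, and showing that only the two exceptional quivers admit such a global swap automorphism lying outside $E(A) \cdot C(A)$.
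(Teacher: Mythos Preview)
Your proposal follows essentially the same approach as the paper. One simplification in the paper's execution is worth noting: rather than conditioning on whether the adjacent arrow $\gamma$ is itself part of a parallel pair, the paper shows directly that whenever $\phi(\alpha)=\lambda_1\alpha+\lambda_2\alpha'$ with both $\lambda_i\neq 0$, the arrow $\alpha'$ must already be maximal. The argument is uniform: if $\alpha'\beta\notin\cI$ for some arrow $\beta$, then $\alpha\beta\in\cI$, and writing $\phi(\beta)=\lambda_1'\beta+\lambda_2'\beta'$ and expanding $0=\phi(\alpha)\phi(\beta)$ forces first $\lambda_1'=0$ and then $\lambda_2'=0$, so $\phi(\beta)=0$, a contradiction. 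This handles every mixed parallel pair at once and removes the need for your propagation step; after all type III corrections, $\phi$ is a pure arrow permutation, and a short connectedness argument (starting from any arrow with no parallel partner) then shows that a nontrivial such permutation forces the two all-doubled quivers. Also, a small slip in your write-up: when $q=0$ and $r\neq 0$ it is $\alpha$, not $\bar\alpha$, that is forced to be maximal, so the relevant correction is $\exp(\delta_{(\bar\alpha,-(r/s)\alpha)})$ with $\bar\gamma_{\bar\alpha}=\alpha$.
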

\begin{proof}

    Suppose that $\phi\in H_A\cap \grAut(A)$. Then, $\phi$ fixes the vertices and takes $\alpha\in\cQ_1$ to $\epsilon_{s(\alpha)}A_1\epsilon_{t(\alpha)}$.
    
    Suppose $\alpha\in\cQ_1$ and $\phi(\alpha)\neq \lambda \beta$ for some arrow $\beta$. Then there must be an arrow $\alpha'\neq \alpha$ parallel to $\alpha$ such that $\phi(\alpha)= \lambda_1 \alpha+ \lambda_2\alpha'$ with both $\lambda_i\in\kk^\times$. Assume for contradiction that $\alpha'$ is not maximal. Then, there exists an arrow $\beta$ such that one of $\alpha'\beta$ and $\beta\alpha'$ is not a relation. The cases are similar, so we assume the former ($\alpha'\beta\notin\cI$). Then $\phi(\beta)= \lambda'_1\beta+\lambda'_2\beta'$ (where $\lambda'_2=0$ if there is no arrow $\beta'$ parallel to $\beta$). Since $A$ is (locally) gentle, $\alpha\beta, \alpha'\beta'\in\cI$, so 
\begin{align*}
    \phi(\alpha\beta) = (\lambda_1\lambda'_2)\alpha\beta'+(\lambda_1'\lambda_2)\alpha'\beta
\end{align*}
As $\alpha\beta'\neq \alpha'\beta\notin\cI$, we have $\lambda_1'\lambda_2=0$ and hence $\lambda_1'=0$. Then $\lambda'_2\neq 0$ so $\beta'$ exists and $\alpha\beta'\notin\cI$. However, this implies $\lambda_1\lambda'_2=0$, a contradiction. Thus, when such an $\alpha'$ exists, it must be maximal. Since $\alpha'\neq \alpha$, we see that 
$\delta_{ \alpha, -\lambda_1^{-1}\lambda_2\alpha'}$ is a type III derivation. Hence, up to an element of $E(A)$ and an element of $C(A)$, $\phi$ acts as a permutation of arrows.

Suppose $\phi\in\grAut(A)$ fixes the vertices and acts as a permutation of the arrows. Since the vertices are fixed, $\phi$ may only swap parallel arrows. Suppose $\alpha\in \cQ_1$ is not parallel to another arrow. Then $\phi$ fixes $\alpha$, and there is a maximal connected subquiver $\cQ'$ of $\cQ$ containing $\alpha$ which is fixed by $\phi$. Assume for contradiction that $\cQ\neq \cQ'$. As $\cQ$ is connected, there must exist $\beta\in \cQ_1\setminus \cQ'_1$ such that either the source or target of $\beta$ belongs to $\cQ'$.  As $\phi$ permutes the arrows and does not fix $\beta$, it must be that there exists $\beta'$ parallel to $\beta$ such that $\phi$ swaps $\beta$ and $\beta'$. Since $\cQ'$ is connected and contains an arrow and $s(\beta)$, some arrow $\bar{\beta}\in \cQ'$ must share a source or target with $\beta$. As $\beta'$ is only other arrow with source $s(\beta)$ and $\beta'\notin \cQ'$, it must be that $t(\bar{\beta})=s(\beta)$. Thus, exactly one of $\bar{\beta}\beta$ and $\bar{\beta}\beta'$ belongs to $\cI$. However, $\phi$ swaps $\bar{\beta}\beta$ and $\bar{\beta}\beta'$, yielding a contradiction. Thus, either every arrow is parallel to another, or $\phi$ fixes all arrows and $H_A\cap \grAut(A)\subseteq E(A)\cdot C(A)$.

    Now, we consider the cases where every arrow is parallel to another.
    Suppose $\cQ$ is the quiver
    \begin{center}
    \begin{tikzpicture}
        \unvtx{1}{-7,0}
        \unvtx{2}{-6,0}
        \unvtx{3}{-5,0}
        \unvtx{4}{-3,0}
        \node (5) at (-4,0) {$\cdots$};
        \draw[transform canvas={yshift=.05cm},->](1) to node[midway, above]{\footnotesize{$\alpha_1$}}(2);
    \draw[transform canvas={yshift=-.05cm},->](1) to node[midway, below]{\footnotesize{$\beta_1$}}(2);
    \draw[transform canvas={yshift=.05cm},->](2) to node[midway, above]{\footnotesize{$\alpha_2$}}(3);
    \draw[transform canvas={yshift=-.05cm},->](2) to node[midway, below]{\footnotesize{$\beta_2$}}(3);
        \undbl{3}{5}
        \draw[transform canvas={yshift=.05cm},->](5) to node[midway, above]{\footnotesize{$\alpha_k$}}(4);
    \draw[transform canvas={yshift=-.05cm},->](5) to node[midway, below]{\footnotesize{$\beta_k$}}(4);
        \end{tikzpicture}
        \end{center}
        where $k\geq 2$.
    Then, up to relabeling, $A=\kk\cQ/\langle \alpha_i\beta_{i+1}, \beta_i\alpha_{i+1}\rangle$. Let $\tau$ denote the morphism given by fixing vertices and swapping $\alpha_i$ and $\beta_i$ for each $i$. Then $\tau\in H_A\cap \grAut(A)$. Additionally, we see that this is the only permutation of arrows (other than the identity) which preserves all relations. Note that $E(A)$ is trivial in this case, and that $\tau C(A)=C(A)\tau$. 
    Thus, $H_A\cap \grAut(A)=\langle \tau\rangle C(A)\cong \ZZ/2\ZZ\ltimes C(A).$ The other case is similar, as we may assume that $\cQ$ is similarly labeled so that $\cI=\langle \alpha_k\alpha_1, \beta_k\beta_1,\alpha_i\beta_{i+1}, \beta_i\alpha_{i+1} \rangle_{1\leq i<k}$ and $\tau$ is again the only nontrivial permutation of the arrows which preserves the relations. Again, $\tau C(A)=C(A)\tau$, so we have the same result.
\end{proof}

In all cases $C(A)\cong (\kk^\times)^{|\cQ_1|}$ in all cases. We can now compute $H_A/H_A\cap \Inn^*(A)$.

\begin{theorem}\label{thm:MIgentle}
    Let $A\not\cong \kk[x]$ be (locally) gentle. Then  
    $H_A/H_A\cap \Inn^*(A)$ is isomorphic to $\GL_2(\kk)$ if $A\cong \kk K$, to $\ZZ/2\ZZ\ltimes (\kk^\times)^{|\cQ_1|}$ if the underlying quiver is
 \begin{center}
    \begin{tikzpicture}
        \unvtx{1}{-7,0}
        \unvtx{2}{-6,0}
        \unvtx{3}{-5,0}
        \unvtx{4}{-3,0}
        \node (5) at (-4,0) {$\cdots$};
        \undbl{1}{2}
        \undbl{2}{3}
        \undbl{3}{5}
        \undbl{5}{4}
\node at (-2, 0) {or};
    \unvtx{0}{1,0}
    \unvtx{1}{.5,.866}
    \unvtx{2}{-.5,.866}
    \unvtx{3}{-1,0}
    \unvtx{{$k-2$}}{-.5,-.866}
    \unvtx{{$k-1$}}{.5,-.866}
    
    \draw[->,transform canvas={yshift=.03cm, xshift=.03cm}] (0) to (1);
    \draw[->,transform canvas={yshift=-.03cm, xshift=-.03cm}] (0) to (1);
    \draw[->,transform canvas={yshift=.05cm}](1) to (2);
    \draw[->,transform canvas={yshift=-.05cm}]  (1) to (2);
    \draw[->,transform canvas={yshift=.03cm, xshift=-.03cm}] (2) to (3);
    \draw[->,transform canvas={yshift=-.03cm, xshift=.03cm}] (2) to (3);
    \draw[->,transform canvas={yshift=.03cm, xshift=.03cm}](3) to ({$k-2$});
    \draw[->,transform canvas={yshift=-.03cm, xshift=-.03cm}]  (3) to ({$k-2$});
    \draw[->,dotted, transform canvas={yshift=.05cm}] ({$k-2$}) to ({$k-1$});
    \draw[->,dotted, transform canvas={yshift=-.05cm}] ({$k-2$}) to ({$k-1$});
    \draw[->,transform canvas={yshift=.03cm, xshift=-.03cm}]  ({$k-1$}) to (0);
    \draw[->,transform canvas={yshift=-.03cm, xshift=.03cm}]  ({$k-1$}) to (0);
\end{tikzpicture}
\end{center}      
    or, otherwise, $(\kk^\times)^{|\cQ_1|}\ltimes\kk^n$ (where $n$ is the number of arrows $\alpha$ for which $\bar{\gamma}_\alpha$ exists).
\end{theorem}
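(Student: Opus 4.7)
The plan is to leverage the short exact sequence
\[1\to H_A\cap\Aut_0(A)\to H_A\to H_A\cap\grAut(A)\to 1\]
obtained by restricting the sequence of Lemma~\ref{lem:aut0} to $H_A$, which splits because any $\sigma\in H_A\cap\grAut(A)$ satisfies $\gr\sigma=\sigma$. Setting $N\defeq H_A\cap\Inn^*(A)$, the intersection $N\cap (H_A\cap\grAut(A))$ lies in $\Aut_0(A)\cap\grAut(A)=\{\id\}$, so the splitting descends to the quotient:
\[H_A/N\cong \bigl((H_A\cap\Aut_0(A))/N\bigr)\rtimes (H_A\cap\grAut(A)).\]
The strategy is then to identify each factor in the three cases listed.

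For $A\cong \kk K$, I would dispatch the claim by direct computation. Any element of $H_A$ is an invertible linear transformation of the span of the two parallel arrows, so $H_A\cong\GL_2(\kk)$. Since $A_+$ is spanned by these arrows and $A_+^2=0$, the element $\Delta_{\one+y}$ fixes both arrows but translates the vertices unless $y=0$, forcing $N=\{\id\}$. Thus $H_A/N\cong\GL_2(\kk)$.

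For $A\not\cong\kk K$, Lemma~\ref{lem:typeIII} gives $(H_A\cap\Aut_0(A))/N\cong E(A)\cong\kk^n$, and Lemma~\ref{lem:Hgr} controls $H_A\cap\grAut(A)$. In the special quiver cases (double-arrow path or cycle, excluding $\kk K$), I would show $\bar{\gamma}_\alpha$ fails to exist for every arrow $\alpha$: the only candidate parallel maximal path is the parallel arrow $\alpha'$, which is never simultaneously left and right maximal under relations forced by (ii') and (iii'), since one direction of extension along the parallel track always remains. Hence $E(A)=\{\id\}$ and $H_A/N\cong H_A\cap\grAut(A)\cong\ZZ/2\ZZ\ltimes C(A)\cong\ZZ/2\ZZ\ltimes(\kk^\times)^{|\cQ_1|}$. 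In all remaining cases, any existing $\bar{\gamma}_\alpha$ must have length $\geq 2$ (otherwise $\alpha$ and $\bar{\gamma}_\alpha$ would be parallel maximal arrows, forcing $A\cong\kk K$ by the example after Definition~\ref{def:string}); consequently, any $\phi\in H_A\cap\grAut(A)$ expressed as $\phi=e\cdot c$ via Lemma~\ref{lem:Hgr} must satisfy $e=\id$ to preserve grading, yielding $H_A\cap\grAut(A)=C(A)\cong(\kk^\times)^{|\cQ_1|}$ and $H_A/N\cong(\kk^\times)^{|\cQ_1|}\ltimes\kk^n$. The semi-direct product action is read off from the conjugation formula in the proof of Lemma~\ref{lem:typeIII}.

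The main obstacle will be verifying the non-existence of $\bar{\gamma}_\alpha$ in the special quiver cases. This requires a careful case analysis of the defining relations: for any admissible choice of relations satisfying (ii')/(iii') on the double-arrow path or cycle, I must confirm that the unique parallel arrow $\alpha'$ admits an extension on at least one side, accounting both for finite neighbors and for the possibility of infinite maximal paths that can appear in the cyclic configuration.
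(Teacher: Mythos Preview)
Your overall framework via the split short exact sequence is sound and parallels the paper's use of Lemmas~\ref{lem:typeIII} and~\ref{lem:Hgr}. However, there is a concrete gap in your treatment of the remaining (generic) case: the claim that any existing $\bar{\gamma}_\alpha$ has length $\geq 2$ is false. Your justification presumes that if $\bar{\gamma}_\alpha$ is a maximal arrow then $\alpha$ must also be maximal, but nothing in the definition of $\bar{\gamma}_\alpha$ forces this. Take the gentle algebra on the quiver with vertices $1,2,3$, parallel arrows $\alpha,\alpha':1\to 2$, a single arrow $\beta:2\to 3$, and the single relation $\alpha'\beta$. Then $\alpha'$ is a finite maximal path parallel to $\alpha$ with $F(\alpha')=L(\alpha')=\alpha'\neq\alpha$, so $\bar{\gamma}_\alpha=\alpha'$ has length~$1$; yet $\alpha$ is not maximal (since $\alpha\beta\neq 0$), the quiver is not one of the special all-double-arrow quivers, and $A\not\cong\kk K$.

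This error breaks both of your factor identifications simultaneously. When $\ell(\bar{\gamma}_\alpha)=1$ the element $\exp(\delta_{(\alpha,\bar\gamma_\alpha)})$ is graded rather than in $\Aut_0(A)$, so $E(A)\not\subseteq\Aut_0(A)$ and your appeal to Lemma~\ref{lem:typeIII} no longer yields $(H_A\cap\Aut_0(A))/N\cong E(A)$; in the example above one computes $(H_A\cap\Aut_0(A))/N=\{\id\}$ while $E(A)\cong\kk$. Correspondingly $H_A\cap\grAut(A)$ is strictly larger than $C(A)$, as it contains this graded piece of $E(A)$. The two discrepancies cancel and the final group is still $(\kk^\times)^{|\cQ_1|}\ltimes\kk^n$, but your argument does not establish the isomorphism. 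The paper sidesteps the issue by not splitting along the $\Aut_0$/graded axis: it passes directly to the product $(H_A\cap\grAut(A))\cdot E(A)$ inside $H_A/N$ and then uses the containment $H_A\cap\grAut(A)\subseteq E(A)\cdot C(A)$ from Lemma~\ref{lem:Hgr} to reduce this to $C(A)\cdot E(A)$, so the length of $\bar{\gamma}_\alpha$ never enters. Your anticipated ``main obstacle'' (triviality of $E(A)$ in the special quivers) is, by contrast, correct and already recorded in the proof of Lemma~\ref{lem:Hgr}.
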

\begin{proof}
    It is known that $\Aut(\cong \kk K)\cong \GL_2(\kk)$, viewing $\kk K_1$ as a $\kk$-vectorspace. Otherwise,
    \[\Aut(A)=\grAut(A)\cdot \Aut_0(A)=\grAut(A)\cdot E(A)\cdot \Inn^*(A),\]
    and $E(A)$ is a normal subgroup of $H_A$ with trivial intersection with $\Inn^*(A)$. Since $\grAut(A)\cap \Inn^*(A)$ is also trivial (as $\Inn^*(A)\subseteq \Aut_0(A)$), we see that 
    \[H_A/H_A\cap \Inn^*(A)\cong (H_A\cap \grAut(A))\cdot E(A).\]
    In the other two cases where all edges are doubled, $E(A)$ is trivial and
    \[H_A/H_A\cap \Inn^*(A)\cong H_A\cap \grAut(A) \cong \ZZ/2\ZZ\ltimes (\kk^\times)^{|\cQ_0|}.\]
    Otherwise,
$H_A/H_A\cap \Inn^*(A)\cong  C(A)\cdot E(A)\cong (\kk^\times)^{|\cQ_1|}\ltimes \kk^n.$
\end{proof}

\subsection*{Acknowledgments} The author would like to thank Jason Gaddis,  Ken Goodearl, Birge Huisgen-Zimmmerman, Amrei Oswald, and James Zhang for helpful contributions.
S. Ford was supported in part by the US National Science Foundation (grant Nos. DMS-2302087 and  DMS-2001015).

\printbibliography
\end{document}